
\documentclass{article}

\usepackage{amsmath,amsfonts,amssymb,amsthm}
\usepackage{slashed}
\usepackage{xy}
\usepackage[breaklinks=true]{hyperref}
\usepackage{float}
\usepackage{graphicx}
\usepackage[percent]{overpic}
\usepackage{color}

\definecolor{myurlcolor}{rgb}{0,0,0.7}
\usepackage{hyperref}
\hypersetup{colorlinks,
linkcolor=myurlcolor,
citecolor=myurlcolor,
urlcolor=myurlcolor}

\hfuzz=6pt

\input xy
\xyoption{all}

\newcommand{\maps}{\colon}    
\newcommand{\R}{{\mathbb R}}  
\newcommand{\C}{{\mathbb C}}  
\renewcommand{\H}{{\mathbb H}}  
\renewcommand{\O}{{\mathbb O}}  
\newcommand{\I}{{\mathbb I}}  
\newcommand{\Z}{{\mathbb Z}}  

\newcommand{\RP}{\mathbb{R}\mathrm{P}} 
\newcommand{\PC}{\mathrm{P}C} 

\renewcommand{\Re}{\mathrm{Re}} 
\renewcommand{\Im}{\mathrm{Im}} 
\newcommand{\tr}{{\mathrm{tr}}} 
\newcommand{\Ann}{\mathrm{Ann}} 

\newcommand{\SO}{{\rm SO}}      
\newcommand{\SU}{{\rm SU}}      
\newcommand{\G}{\mathrm{G}}     

\newcommand{\so}{{\mathfrak{so}}}  
\newcommand{\g}{\mathfrak{g}}      

\newcommand{\Proj}{\mathrm{P}}  


\newcommand{\Ro}{\mathbf{R}} 


\newcommand{\tensor}{\otimes} 


\newcommand{\define}[1]{{\bf \boldmath{#1}}}
\newcommand{\arxiv}[1]{\href{http://arxiv.org/abs/#1}{arXiv:{#1}}}

\newtheorem{thm}{Theorem}    
\newtheorem{cor}[thm]{Corollary}

\newtheorem{prop}[thm]{Proposition}

\theoremstyle{definition}

\newtheorem{defn}[thm]{Definition}

        \newcommand{\be}{\begin{equation}}
        \newcommand{\ee}{\end{equation}}
        \newcommand{\ba}{\begin{eqnarray}}
        \newcommand{\ea}{\end{eqnarray}}
        \newcommand{\ban}{\begin{eqnarray*}}
        \newcommand{\ean}{\end{eqnarray*}}
        \newcommand{\barr}{\begin{array}}
        \newcommand{\earr}{\end{array}}

	\textwidth 6in
	\textheight 8.5in	\evensidemargin .25in
	\oddsidemargin .25in
	\topmargin .25in
	\headsep 0in
	\headheight 0in
	\footskip .5in
	\pagestyle{plain}
	\pagenumbering{arabic}

\title{$\G_2$ and the Rolling Ball}

\author{John C.\ Baez\\[.5em]
{\small Department of Mathematics} \\[-.3em]
{\small  University of California}\\[-.3em]
{\small Riverside, California 92521, USA} \\
\small and \\
{\small Centre for Quantum Technologies}  \\[-.3em]
{\small National University of Singapore} \\[-.3em]
{\small Singapore 117543}  \\
\small  baez@math.ucr.edu 
 \and
{John Huerta} \\[.5em]
{\small CAMGSD} \\[-.3em]
{\small Instituto Superior T\'ecnico} \\[-.3em]
{\small Av.\ Ravisco Pais} \\[-.3em]
{\small 1049-001 Lisboa, Portugal} \\
\small huerta@math.ucr.edu
}

\vskip 2em
\date{\small August 7, 2012}

\begin{document}

\maketitle

\bigskip \bigskip

\begin{abstract}
\noindent
	Understanding the exceptional Lie groups as the symmetry
	groups of simpler objects is a long-standing program in
	mathematics. Here, we explore one famous realization of the
	smallest exceptional Lie group, $\G_2$.  Its Lie algebra
	$\g_2$ acts locally as the symmetries of a ball rolling on a
	larger ball, but only when the ratio of radii is 1:3.  Using
	the split octonions, we devise a similar, but more global,
	picture of $\G_2$: it acts as the symmetries of a `spinorial
	ball rolling on a projective plane', again when the ratio of
	radii is 1:3.  We explain this ratio in simple terms, use the
	dot product and cross product of split octonions to describe
	the $\G_2$ incidence geometry, and show how a form of
	geometric quantization applied to this geometry lets us
       recover the imaginary split octonions and these operations.
\end{abstract}

\section{Introduction}

When Cartan and Killing classified the simple Lie algebras, they
uncovered five surprises: the exceptional Lie algebras.  The smallest
of these, the Lie algebra of $\G_2$, was soon constructed explicitly
by Cartan and Engel.  However, it was not obvious how to understand
this Lie algebra as arising from the symmetry group of a
naturally occuring mathematical object.  Giving a simple
description of $\G_2$ has been a challenge ever since: though much
progress has been made, the story is not yet finished.

In this paper, we study two famous realizations of the split real form
of $\G_2$, both essentially due to Cartan.  First, this group is the
automorphism group of an 8-dimensional nonassociative algebra: the
split octonions.  Second, it is roughly the group of symmetries of a
ball rolling on a larger fixed ball without slipping or twisting, but
\emph{only when the ratio of radii is 1:3}.

The relationship between these pictures has been discussed before, and
indeed, the history of this problem is so rich that we postpone all
references to the next section, which deals with that history.  We
then explain how each description of $\G_2$ is hidden inside the
other.  On the one hand, a variant of the 1:3 rolling ball system,
best thought of as a `spinor rolling on a projective plane', lives
inside the imaginary split octonions as the space of `light rays':
1-dimensional null subspaces.  On the other hand, we can recover the
imaginary split octonions from this variant of the 1:3 rolling ball
via geometric quantization.

Using a spinorial variant of the rolling ball system may seem odd, but it is
essential if we want to see the hidden $\G_2$ symmetry.  In fact, we must
consider three variants of the rolling ball system.  The first is the ordinary
rolling ball, which has configuration space $S^2 \times \SO(3)$.  This never
has $\G_2$ symmetry.  We thus pass to the double cover, $S^2 \times \SU(2)$,
where such symmetry is possible.  We can view this as the configuration space
of a `rolling spinor': a rolling ball that does not come back to its original
orientation after one full rotation, but only after two.  To connect this
system with the split octonions, it pays to go a step further, and identify
antipodal points of the fixed sphere $S^2$.  This gives $\RP^2 \times \SU(2)$,
which is the configuration space of a spinor rolling on a projective plane.

This last space explains why the 1:3 ratio of radii is so
special. As mentioned, a spinor comes back to its original state only
after two full turns.  On the other hand, a point moving on the
projective plane comes back to its original position after going
halfway around the double cover $S^2$.  Consider a ball rolling
without slipping or twisting on a larger fixed ball.  What must the
ratio of their radii be so that the rolling ball makes two full turns
as it rolls halfway around the fixed one?  Or put another way: what
must the ratio be so that the rolling ball makes four full turns as it
rolls once around the fixed one?  The answer is 1:3.

At first glance this may seem surprising.  Isn't the correct answer
1:4?  

No: a ball of radius 1 turns $R+1$ times as it rolls once around a
fixed ball of radius $R$.  One can check this when $R = 1$ using two
coins of the same kind.  As one rolls all the way about the other
without slipping, it makes two full turns.  Similarly, in our
$365\frac{1}{4}$ day year, the Earth actually turns $366\frac{1}{4}$
times.  This is why the sidereal day, the day as judged by the
position of the stars, is slightly shorter than the ordinary solar
day.  The Earth is not rolling without slipping on some imaginary
sphere.  However, just as with the rolling ball, it makes an `extra
turn' by completing one full revolution around the center of its
orbit.

Of course, this kind of reasoning only takes us so far.  A spinor will
come back to itself after any even number of turns, so the ratios of
radii 1:3, 1:7, 1:11, and so on are all permitted by this
argument---but only the first, 1:3, gives a system with $\G_2$
symmetry.

To understand this a bit better, we should bring the split octonions into the
game.  For any $R > 1$ there is an incidence geometry with points and lines
defined as follows:
\begin{itemize}
\item The points are configurations of a spinorial ball of radius $1$
rolling on a fixed projective plane, with double cover a sphere of radius $R$. 
\item The lines are curves where the spinorial ball rolls
along lines in the projective plane without slipping or twisting.
\end{itemize}
This space of points, $\RP^2 \times \SU(2)$, is the same as the space
of 1-dimensional null subspaces of the imaginary split octonions,
which we call $\PC$. Under this identification, the lines of our
incidence geometry become certain curves in $\PC$.  If and only if $R
= 3$, these curves `straighten out': they are given by projectivizing
certain 2-dimensional null subspaces of the imaginary split octonions.
We prove this in Theorem \ref{thm:nullsubspace}.

Indeed, in the case of the 1:3 ratio, and \emph{only} in this case, we can
find the rolling ball system hiding inside the split octonions.  
A `null subalgebra' of the split octonions is one where the product of any two
elements is zero.  In Theorem \ref{thm:lines_and_2d_null_subalgebras} we show
that when $R = 3$, the above incidence geometry is isomorphic to one where:
\begin{itemize}
\item The points are 1d null subalgebras of the 
imaginary split octonions.
\item The lines are 2d null subalgebras of the 
imaginary split octonions.
\end{itemize}
As a consequence, this geometry is invariant under the automorphism
group of the split octonions: the split real form of $\G_2$.  

This group is also precisely the group that preserves the dot product
and cross product operations on the imaginary split octonions.  These 
are defined by decomposing the octonionic product into real and imaginary
parts:
\[         x y = - x \cdot y + x \times y,  \]
where $x \times y$ is an imaginary split octonion and $x \cdot y$ is a real
multiple of the identity, which we identify with a real number.
One of our main goals here is to give a detailed description of the 
above incidence geometry in terms of these operations.  The 
key idea is that any nonzero imaginary split octonion $x$ with 
$x \cdot x = 0$ spans a 1-dimensional null subalgebra $\langle x \rangle$, 
which is a point in this geometry.  Given two points 
$\langle x \rangle$ and $\langle y \rangle$, we say they are 
`at most $n$ rolls away' if we can get from one to 
the other by moving along a sequence of at most $n$ lines.  Then:
\begin{itemize}
\item $\langle x \rangle$ and $\langle y \rangle$ are at most one roll
away if and only if $x y = 0$, or equivalently, $x \times y = 0 $.
\item $\langle x \rangle$ and $\langle y \rangle$ are at most two rolls
away if and only if $x \cdot y$ = 0.
\item $\langle x \rangle$ and $\langle y \rangle$ are always at most
three rolls away.
\end{itemize}
We define a `null triple' to be an ordered triple of nonzero null imaginary
split octonions $x$, $y$, $z$, pairwise orthogonal, obeying the
condition $(x \times y) \cdot z = \frac{1}{2}$. 
We show that any null triple gives rise to a configuration of 
points and lines like this:
\[
\xy
	(0,10)*{\bullet}="B";
	(2,11.16)*{}="B1";
	(-2,11.16)*{}="B2";
	(8.6,5)*{\bullet}="A";
	(10.6,3.84)*{}="A1";
	(8.6,7)*{}="A2";
	(-8.6,5)*{\bullet}="C";
	(-10.6,3.84)*{}="C1";
	(-8.6,7)*{}="C2";
	(8.6,-5)*{\bullet}="D";
	(10.6,-3.84)*{}="D1";
	(8.6,-7)*{}="D2";
	(-8.6,-5)*{\bullet}="F";
	(-10.6,-3.84)*{}="F1";
	(-8.6,-7)*{}="F2";
	(0,-10)*{\bullet}="E";
	(2,-11.16)*{}="E1";
	(-2,-11.16)*{}="E2";
	"B1";"C1"**\dir{-};
	"B2";"A1"**\dir{-};
	"F1";"E1"**\dir{-};
	"D1";"E2"**\dir{-};
	"A2";"D2"**\dir{-};
	"C2";"F2"**\dir{-};
	"A"+(3,3)*{\langle x \rangle};
	"B"+(0,3)*{\langle x \times y \rangle};
	"C"+(-3,3)*{\langle y \rangle};
	"D"+(4,-4)*{\langle y \times z \rangle};
	"E"+(0,-3)*{\langle z \rangle};
	"F"+(-4,-4)*{\langle z \times x \rangle};
\endxy
\]
In the theory of buildings, this sort of configuration is called an
`apartment' for the group $\G_2$.  Together with $(x \times y) \times
z$, the six vectors shown here form a basis of the imaginary split
octonions, as we show in Theorem \ref{thm:nulltriple}.  Moreover, we
show in Theorem \ref{thm:torsor} that the split real form of $\G_2$
acts freely and transitively on the set of null triples.

We also show that starting from this incidence geometry, we can
recover the split octonions using geometric quantization.  The space
of points forms a projective real variety,
\[    \PC   \cong \RP^2 \times \SU(2) . \]
There is thus a line bundle $L \to \PC$ obtained by restricting the
dual of the canonical line bundle to this variety.  Naively, one might
try to geometrically quantize $\PC$ by forming the space of
holomorphic sections of this line bundle.  However, since $\PC$ is a
\emph{real} projective variety, and $L$ is a \emph{real} line bundle,
the usual theory of geometric quantization does 
not directly apply.  Instead we need a slightly more elaborate
procedure where we take sections of $L \to \PC$ that extend to
holomorphic sections of the complexification $L^\C \to \PC^\C$.  In
Theorem \ref{thm:real_sections} we prove the space of such sections is
the imaginary split octonions.  In Theorem \ref{thm:cross_product},
we conclude by using geometric quantization to reconstruct the cross
product of imaginary split octonions, at least up to a constant factor.

\section{History}

On May 23, 1887, Wilhelm Killing wrote a letter to Friedrich Engel
saying that he had found a 14-dimensional simple Lie algebra
\cite{Agricola}.  This is now called $\g_2$.  By October he had
completed classifying the simple Lie algebras, and in the next three years
he published this work in a series of papers \cite{Killing}.  Besides
the already known classical simple Lie algebras, he claimed to have
found six `exceptional' ones.  In fact he only gave a rigorous
construction of the smallest, $\g_2$.  In his 1894 thesis, \'Elie
Cartan \cite{Cartan:classification} constructed all of them and
noticed that two of them were isomorphic, so that there are really
only five.

But already in 1893, Cartan had published a note \cite{Cartan:note}
describing an open set in $\C^5$ equipped with a 2-dimensional
`distribution'---a smoothly varying field of 2d spaces of tangent
vectors---for which the Lie algebra $\g_2$ appears as the
infinitesimal symmetries.  In the same year, in the same journal,
Engel \cite{Engel:note} noticed the same thing.  As we shall see, this
2-dimensional distribution is closely related to the rolling ball.
The point is that the space of configurations of the rolling ball is
5-dimensional, with a 2-dimensional distibution that describes motions
of the ball where it rolls without slipping or twisting.

Both Cartan \cite{Cartan:five} and Engel \cite{Engel:G2} returned to
this theme in later work.  In particular, Engel discovered in 1900
that a generic antisymmetic trilinear form on $\C^7$ is preserved by a
group isomorphic to the complex form of $\G_2$.  Furthermore, starting
from this 3-form he constructed a nondegenerate symmetric bilinear
form on $\C^7$.  This implies that the complex form of $\G_2$ is
contained in a group isomorphic to $\SO(7,\C)$.  He also noticed that
the vectors $x \in \C^7$ that are null---meaning $x \cdot x =
0$, where we write the bilinear form as a dot product---define a
5-dimensional projective variety on which $\G_2$ acts.

As we shall see, this variety is the complexification of the
configuration space of a rolling spinorial ball on a projective plane.
Futhermore, the space $\C^7$ is best seen as the complexification of
the space of imaginary octonions.  Like the space of imaginary
quaternions (better known as $\R^3$), the 7-dimensional space of
imaginary octonions comes with a dot product and cross product.
Engel's bilinear form on $\C^7$ arises from complexifying the dot
product.  His antisymmetric trilinear form arises from the dot product
together with the cross product via the formula $x \cdot (y \times
z)$.

However, all this was seen only later.  It was only in 1908 that
Cartan mentioned that the automorphism group of the octonions is a
14-dimensional simple Lie group \cite{Cartan:octonions1}.  Six years
later he stated something he probably had known for some time: this
group is the compact real form of $\G_2$ \cite{Cartan:octonions2}.

The octonions had been discovered long before, in fact the day after
Christmas in 1843, by Hamilton's friend John Graves.  Two months
before that, Hamilton had sent Graves a letter describing his dramatic
discovery of the quaternions.  This encouraged Graves to seek an even
larger normed division algebra, and thus the octonions were born.
Hamilton offered to publicize Graves' work, but put it off or forgot
until the young Arthur Cayley rediscovered the octonions in 1845
\cite{Cayley}.  That this obscure algebra lay at the heart of {\em
all} the exceptional Lie algebras became clear only slowly
\cite{Baez:octonions}.  Cartan's realization of its relation to
$\g_2$, and his later work on triality, was the first step.

In 1910, Cartan wrote a paper that studied 2-dimensional distributions
in 5 dimensions \cite{Cartan:five}.  Generically such a distibution is
not integrable: the Lie bracket of two vector fields lying in this
distribution does not again lie in this distribution. However, near a
generic point, it lies in a 3-dimensional distribution.  The Lie
bracket of vector fields lying in this 3-dimensional distibution then
generically give arbitary tangent vectors to the 5-dimensional
manifold.  Such a distribution is called a `$(2,3,5)$
distribution'.  Cartan worked out a complete system of local geometric
invariants for these distributions.  He showed that if all these
invariants vanish, the infinitesimal symmetries of a
$(2,3,5)$ distribution in a neighborhood of a point form the Lie
algebra $\g_2$.

Again this is relevant to the rolling ball.  The space of
configurations of a ball rolling on a surface is 5-dimensional, and it
comes equipped with a $(2,3,5)$ distribution.  The 2-dimensional
distibution describes motions of the ball where it rolls without
twisting or slipping.  The 3-dimensional distribution describes
motions where it can roll and twist, but not slip.  Cartan did
not discuss rolling balls, but he does consider a closely related
example: curves of constant curvature 2 or 1/2 in the unit 3-sphere.

Beginning in the 1950's, Fran\c{c}ois Bruhat and Jacques Tits
developed a very general approach to incidence geometry, eventually
called the theory of `buildings' \cite{Brown,Tits}, which among other
things gives a systematic approach to geometries having simple Lie
groups as symmetries.  In the case of $\G_2$, because the Dynkin
diagram of this group has two dots, the relevant geometry has two
types of figure: points and lines.  Moreover because the Coxeter group
associated to this Dynkin diagram is the symmetry group of a hexagon,
a generic pair of points $a$ and $d$ fits into a configuration like this,
called an `apartment':
\[
\xy
	(0,10)*{\bullet}="B";
	(2,11.16)*{}="B1";
	(-2,11.16)*{}="B2";
	(8.6,5)*{\bullet}="A";
	(10.6,3.84)*{}="A1";
	(8.6,7)*{}="A2";
	(-8.6,5)*{\bullet}="C";
	(-10.6,3.84)*{}="C1";
	(-8.6,7)*{}="C2";
	(8.6,-5)*{\bullet}="D";
	(10.6,-3.84)*{}="D1";
	(8.6,-7)*{}="D2";
	(-8.6,-5)*{\bullet}="F";
	(-10.6,-3.84)*{}="F1";
	(-8.6,-7)*{}="F2";
	(0,-10)*{\bullet}="E";
	(2,-11.16)*{}="E1";
	(-2,-11.16)*{}="E2";
	"B1";"C1"**\dir{-};
	"B2";"A1"**\dir{-};
	"F1";"E1"**\dir{-};
	"D1";"E2"**\dir{-};
	"A2";"D2"**\dir{-};
	"C2";"F2"**\dir{-};
	"A"+(2,2)*{c};
	"B"+(0,3)*{b};
	"C"+(-2,2)*{a};
	"D"+(2,-2)*{d};
	"E"+(0,-3)*{e};
	"F"+(-2,-2)*{f};
\endxy
\]
There is no line containing a pair of points here except when a
line is actually shown, and more generally there are no `shortcuts'
beyond what is shown.  For example, we go from $a$ to $b$ by following
just one line, but it takes two to get from $a$ to $c$, and three to get
from $a$ to $d$.  

For a nice introduction to these ideas, see the paper by Betty
Salzberg \cite{Salzberg}.  Among other things, she notes that the
points and lines in the incidence geometry of the split real form of
$\G_2$ correspond to 1- and 2-dimensional null subalgebras of the
imaginary split octonions.  This was shown by Tits in 1955 \cite{TitsG2}.

In 1993, Robert Bryant and Lucas Hsu \cite{BryantHsu} gave a detailed
treatment of curves in manifolds equipped with 2-dimensional
distributions, greatly extending the work of Cartan.  They showed how
the space of configurations of one surface rolling on another fits into
this framework.  However, Igor Zelenko may have been the first to
explicitly mention a ball rolling on another ball in this context, and
to note that something special happens when their ratio of radii is 3
or $1/3$.  In a 2005 paper \cite{Zelenko}, he considered an invariant
of $(2,3,5)$ distributions.  He calculated it for the distribution
arising from a ball rolling on a larger ball and showed it equals zero 
in these cases.

In 2006, Bor and Montgomery's paper ``$\G_2$ and the `rolling
distribution'" put many of the pieces together \cite{BorMontgomery}.
They studied the $(2,3,5)$ distribution on $S^2 \times \SO(3)$ coming
from a ball of radius 1 rolling on a ball of radius $R$, and proved a
theorem which they credit to Robert Bryant.  First, passing to the
double cover, they showed the corresponding distribution on $S^2
\times \SU(2)$ has a symmetry group whose identity component contains
the split real form of $\G_2$ when $R = 3$ or $1/3$.  Second, they
showed this action does not descend to original rolling ball
configuration space $S^2 \times \SO(3)$.  Third, they showed that for
any other value of $R$ except $R = 1$, the symmetry group is
isomorphic to $\SU(2) \times \SU(2)/\pm(1,1)$.  They also wrote:

\begin{quote} Despite all our efforts, the `3' of the ratio 1:3 
remains mysterious. In this article it simply arises out of the
structure constants for $G_2$ and appears in the construction of the
embedding of $\so(3) \times \so(3)$ into $\g_2$.  Algebraically
speaking, this `3' traces back to the 3 edges in $\g_2$'s Dynkin
diagram and the consequent relative positions of the long and short
roots in the root diagram for $\g_2$ which the Dynkin diagram is
encoding.

\textbf{Open problem.} Find a geometric or dynamical interpretation
for the `3' of the 3:1 ratio.
\end{quote}

While Bor and Montgomery's paper goes into considerable detail about
the connection with split octonions, most of their work uses the now
standard technology of semisimple Lie algebras: roots, weights and the
like.  In 2006 Sagerschnig \cite{Sagerschnig} described the incidence
geometry of $\G_2$ using the split octonions, and in 2008, Agrachev
wrote a paper entitled ``Rolling balls and octonions''.  He emphasizes
that the double cover $S^2 \times \SU(2)$ can be identified with the
double cover of what we are calling $\PC$, the projectivization of
the space $C$ of null vectors in the imaginary split octonions.  He
then shows that given a point $\langle x \rangle \in \PC$, the set of
points $\langle y \rangle$ connected to $\langle x \rangle$ by a
single roll is the annihilator
\[           \{ x \in \I : y x = 0 \} \]
where $\I$ is the space of imaginary split octonions.

This sketch of the history is incomplete in many ways.  For more
details, try Agricola's essay \cite{Agricola} on the history of $\G_2$
and Robert Bryant's lecture about Cartan's work on simple Lie groups
of rank two \cite{Bryant}.  Aroldo Kaplan's review article
``Quaternions and octonions in mechanics'' is also very helpful
\cite{Kaplan}: it emphasizes the role that quaternions play in
describing rotations, and the way an imaginary split octonion is built
from an imaginary quaternion and a quaternion.  We take advantage of
this---and indeed most the previous work we have mentioned!---in 
what follows.  

\section{The rolling ball}

Our goal is to understand $\G_2$ in terms of a rolling ball.  It is
\emph{almost} true that split real form of $\G_2$ is the symmetry
group of a ball of radius 1 rolling on a fixed ball 3 times as large
without slipping or twisting.  In fact we must pass to the double
cover of the rolling ball system, but this is almost as nice: it is a
kind of `rolling spinor'.

Before we talk about the rolling spinor, let us introduce the
incidence geometry of the ordinary rolling ball.  This differs from
the usual approach to thinking of the rolling ball as a physical
system with a constraint, but it is equivalent.  There is an incidence
geometry where:
\begin{itemize}
	\item Points are configurations of a ball of radius 1 touching a 
              fixed ball of radius $R$.
	\item Lines are trajectories of the ball of radius 1 rolling without
		slipping or twisting along great circles on the fixed ball of
		radius $R$.
\end{itemize}
We call the ball of radius 1 the \define{rolling} ball, and the ball of
radius $R$ the \define{fixed} ball.  

To specify a point in this incidence geometry, we can give a point $x
\in S^2$ on the unit sphere, together with a rotation $g \in \SO(3)$.
Physically, $Rx \in \R^3$ is the point of contact where the rolling
ball touches the fixed ball, while $g$ tells us the orientation of the
rolling ball, or more precisely how to obtain its orientation from
some fixed, standard orientation.  Thus we define the space of points
in this incidence geometry to be $S^2 \times \SO(3)$.  This space is
independent of the radius $R$, but the lines in this space depend on
$R$.  To see how we should define them, it helps to reason physically.

We begin with the assumption that, since the rolling ball is not
allowed to slip or twist as it rolls, the point of contact traces
paths of equal arclength on the fixed and rolling balls.  In a picture:

\begin{center}
	\includegraphics[scale=0.5]{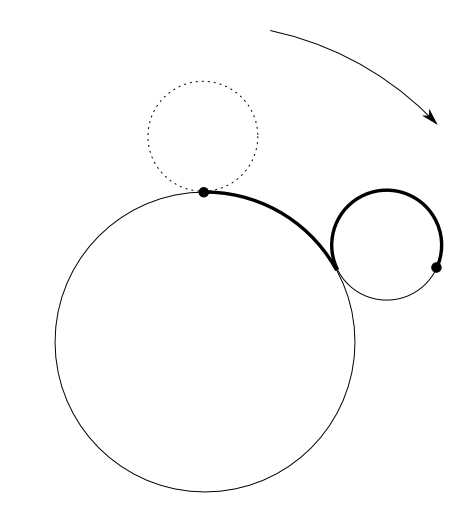}
\end{center}

Now let us quantify this.  Begin with a configuration in which the
rolling ball sits at the \define{north pole}, $(0,0,R) \in \R^3$, of
the fixed ball, and let it roll to a new configuration on a great
circle passing through the north pole, sweeping out a central angle
$\Phi$ in the process.  The point of contact thus traces out a path of
arclength $R \Phi$. As the rolling ball turns, its initial point of
contact sweeps out an angle of $\phi$ relative to the line segment
connecting the centers of both balls.  In a picture:

\begin{center}
\begin{overpic}[scale=0.5]{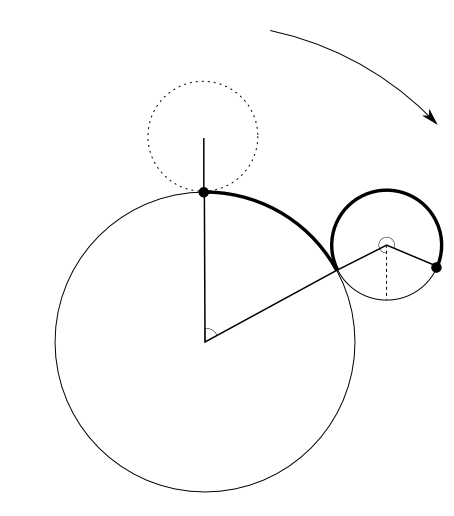}
       \put(42,38){$\Phi$}
       \put(71,47){$\Phi$}
       \put(74,56){$\phi$}
\end{overpic}
\end{center}

By assumption, the distances traced out by the point of contact on the
fixed and rolling balls are equal, and these are:
\[ R \Phi = \phi , \]
since the rolling ball has unit radius.  But because the frame of the
rolling ball has itself rotated by angle $\Phi$ in the frame of the 
fixed ball, the rolling ball has turned by an angle:
\[ \phi + \Phi = (R+1)\Phi . \]
So: in each revolution around the fixed ball, the rolling ball turns
$R+1$ times!  We urge the reader to check this directly for the case
$R = 1$ using two coins of the same sort.  As one coin rolls around
the other without slipping or twisting, it turns around twice.

This reasoning makes it natural to define the rolling
trajectories using a parameterization.  Let $u$ and $v$ be 
orthogonal unit vectors in $\R^3$.  They both lie on $S^2$, and on the
great circle parameterized by
\[ \cos(\Phi) u + \sin(\Phi) v \]
where $\Phi \in \R$.  If the rolling ball starts at $u$ in the standard
configuration, then when it rolls to $\cos(\Phi) u + \sin(\Phi) v$, it rotates about
the axis $u \times v$ by the angle $(R+1)\Phi$.  Writing $\Ro(w,\alpha)$ for the
right-handed rotation by an angle $\alpha$ about the unit vector $w$, the rolling
trajectory is
\[
\{ (\cos(\Phi) u + \sin(\Phi) v, \; \Ro(u \times v, (1+R) \Phi)) 
: \Phi \in \R \} \; \subset S^2 \; \times \SO(3) .
\]

More generally, the rolling ball may be rotated by some arbitary
element $g \in \SO(3)$ when it starts its trajectory.  Then the
rolling trajectory will be
\begin{equation}
\label{trajectory}
L = \{ (\cos(\Phi) u + \sin(\Phi) v, \; \Ro(u \times v, (1+R) \Phi)g ) 
: \; \Phi \in \R \} \; \subset \; S^2 \times \SO(3)
\end{equation}
We define a \define{line} in $S^2 \times \SO(3)$ to be any subset of
this form.  Of course this notion of line depends on $R$.  Note that
different choices of $u, v$ and $g$ may give different
parametrizations of the same line, since a rolling motion may start at
any point along a given line. In fact the space of lines is
5-dimensional: two dimensions for the choice of our starting point $u
\in S^2$, one dimension for the choice of $v \in S^2$ orthogonal to
$u$, determining the direction in which to roll, and three dimensions
for the choice of starting orientation $g \in \SO(3)$, minus one
dimension of redundancy since our starting point on the line was arbitrary.

\section{The rolling spinor}
\label{rolling_spinor}

We now consider a situation where the rolling ball behaves like
a spinor, in that it must make two whole turns instead of one
to return to its original orientation.  Technically this means
replacing the rotation group $\SO(3)$ by its double cover, the group
$\SU(2)$.  Since $\SU(2)$ can be seen as the group of unit 
quaternions, this brings quaternions into the game---and the split
octonions follow soon after!

We begin with a lightning review of quaternions.  Recall that the
\define{quaternions}
\[   \H = \{ a + bi + cj + dk : \; a,b,c,d \in \R \} \]
form a real associative algebra with product specified
by Hamilton's formula:
\[    i^2 = j^2 = k^2 = ijk = -1 . \]
The \define{conjugate} of a quaternion $x = a + bi + cj + dk$ is
defined to be $\overline{x} = a - bi - cj - dk$, and its \define{norm}
$|x|$ is defined by
\[  |x|^2 = x \overline{x} = \overline{x} x  = a^2 + b^2 + c^2 + d^2. \]
The quaternions are a \define{normed division algebra}, meaning that
they obey
\[    |xy| = |x| |y|  \]
for all $x,y \in \H$.  This implies that the quaternions of norm 1
form a group under multiplication.  This group is isomorphic to $\SU(2)$,
so indulging in a slight abuse of notation we simply write
\[    \SU(2) = \{  q \in \H : \; |q| = 1 \} .\]
Similarly, we can identify the \define{imaginary quaternions}
\[ 
      \Im(\H) = \{ x \in \H : \; \overline{x} = -x \} 
\]
with $\R^3$.  The group $\SU(2)$ acts on $\Im(\H)$ via conjugation:
given $q \in \SU(2)$ and $x \in \Im(\H)$, $qxq^{-1}$ is again in $\Im
(\H)$.  This gives an action of $\SU(2)$ as rotations of $\R^3$, which
exhibits $\SU(2)$ as a double cover of $\SO(3)$.

We can now define a spinorial version of the rolling ball incidence
geometry discussed in the last section.  We define the space of points
in the spinorial incidence geometry to be $S^2 \times \SU(2)$.
This is a double cover, and indeed the universal cover, of the space
$S^2 \times \SO(3)$ considered in the previous section.  So, we define
a \define{line} in $S^2 \times \SU(2)$ to be the inverse image under the
covering map
\[   p \maps S^2 \times \SU(2) \to S^2 \times \SO(3)  \]
of a line in $S^2 \times \SO(3)$.  

We can describe these lines more explicitly using quaternions:

\begin{prop} 
\label{trajectory_2}
Any line in $S^2 \times \SU(2)$ is of the form
\[
\tilde{L} =
\{ ( e^{2 \theta w} u , \; e^{(R+1) \theta w} q) : \; \theta \in \R \}.
\]
for some orthogonal unit vectors $u, w \in \Im(\H)$ and some
$q \in \SU(2)$.  
\end{prop}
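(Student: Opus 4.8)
The plan is to start from the explicit description of a line in $S^2 \times \SO(3)$ in Equation~\eqref{trajectory} and push it forward through the covering map $p$, which under the identification $\R^3 \cong \Im(\H)$ is $p = \mathrm{id} \times \rho$ with $\rho \maps \SU(2) \to \SO(3)$ the conjugation double cover $\rho(q)\,x = q x q^{-1}$ recalled above. So I would take a line $\tilde L \subseteq S^2 \times \SU(2)$, which by definition is $p^{-1}(L)$ for $L = \{ (\cos\Phi\,\u + \sin\Phi\,\v,\; \Ro(\u \times \v,\,(1+R)\Phi)\,g) : \Phi \in \R \}$ with $\u,\v \in \R^3$ orthonormal and $g \in \SO(3)$, and set $\w = \u \times \v$, so that $\{\u,\v,\w\}$ is a positively oriented orthonormal frame and $\w$ is a unit imaginary quaternion orthogonal to $\u$.

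The heart of the argument is two elementary identities from the quaternion review. First, for a unit imaginary quaternion $\w$ and $\beta \in \R$, the unit quaternion $e^{\beta\w} = \cos\beta + \sin\beta\,\w$ satisfies $\rho(e^{\beta\w}) = \Ro(\w, 2\beta)$. Second, for an imaginary unit quaternion $\u$ orthogonal to $\w$, one has $e^{\beta\w}\u = \cos\beta\,\u + \sin\beta\,(\w \times \u)$, and here $\w \times \u = \v$ since $\w = \u \times \v$. Granting these, a pair $(x,q)$ lies in $p^{-1}(L)$ precisely when there is some $\Phi \in \R$ with $x = \cos\Phi\,\u + \sin\Phi\,\v$ and $\rho(q) = \Ro(\w,(1+R)\Phi)\,g$. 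Choosing any lift $\tilde g \in \SU(2)$ of $g$, the first identity turns the second equation into $\rho(q) = \rho\bigl(e^{(R+1)(\Phi/2)\w}\,\tilde g\bigr)$, and since $\ker\rho = \{\pm 1\}$ this forces $q = \pm\, e^{(R+1)(\Phi/2)\w}\,\tilde g$. Setting $\theta = \Phi/2$ and using the second identity to rewrite $\cos\Phi\,\u + \sin\Phi\,\v = e^{2\theta\w}\u$, I conclude that $(x,q) \in p^{-1}(L)$ iff $(x,q) = (e^{2\theta\w}\u,\; e^{(R+1)\theta\w}(\pm\tilde g))$ for some $\theta \in \R$. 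Each of the two cases is a set of exactly the form $\tilde L$ in the statement, with $q$ equal to $\tilde g$ or to $-\tilde g$; running the same computation backwards also shows every such $\tilde L$ is a lift of some line.

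The one point I would treat carefully --- and the only real obstacle --- is the sign in the lift $\tilde g$, i.e.\ whether $p^{-1}(L)$ is connected. The sets coming from $\tilde g$ and from $-\tilde g$ coincide exactly when $e^{(R+1)\pi k\w} = -1$ for some $k \in \Z$, that is, when $(R+1)k$ is an odd integer for some $k$; otherwise they are disjoint, so $p^{-1}(L)$ is a disjoint union of two curves of the stated form (this happens, for instance, when $R=3$). To keep the statement clean I would read a ``line in $S^2 \times \SU(2)$'' as a connected lift of a rolling trajectory, each of which is then precisely an $\tilde L$; alternatively one records that in general a line is a disjoint union of at most two sets of the stated form. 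Everything else is the routine translation of rotations into unit quaternions through $\rho$.
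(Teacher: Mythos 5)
Your proof follows the same route as the paper: start from Equation~\eqref{trajectory}, lift along the double cover $\SU(2)\to\SO(3)$ using the standard formula for the cover, use the fact that $\w = \u\times\v$ is orthogonal to $\u$ so that $e^{\Phi\w}\u = \cos\Phi\,\u + \sin\Phi\,\v$, and substitute $\theta = \Phi/2$. The only cosmetic difference is that the paper packages the second identity as $\cos\Phi\,\u+\sin\Phi\,\v = e^{\Phi\w/2}\u\,e^{-\Phi\w/2}$ followed by anticommutation, while you write it directly via $\w\u = \w\times\u$; these are the same computation.

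What you add, and the paper glosses over, is the careful accounting for the two lifts $\pm\tilde g$ of $g$. The paper acknowledges ``there are two choices'' but then announces that the full inverse image $p^{-1}(L)$ equals a single one-parameter curve; as you observe, the two curves coming from $\tilde g$ and $-\tilde g$ are disjoint unless $(R+1)k$ is an odd integer for some integer $k$, which in particular fails for $R=3$. So with the paper's own definition of ``line'' as the full preimage under $p$, the proposition as stated is slightly off, and your suggested fix---read ``line in $S^2\times\SU(2)$'' as a connected component of $p^{-1}(L)$, or note that a preimage splits into at most two sets of the stated form---is the right repair. This makes no difference to the downstream use of the result (the lines in $\P C$ are the images of individual such curves), but it is a genuine gap in the paper's argument that you were correct to flag. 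Your proof is sound, and on this one point it is more precise than the paper's.
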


\begin{proof} First, remember Equation \ref{trajectory}, which describes 
any line $L \subset S^2 \times \SO(3)$:
\[
L = \{ (\cos(\Phi) u + \sin(\Phi) v, \; \Ro(u \times v, (1+R) \Phi)g) 
: \; \Phi \in \R \} \; \subset \; S^2 \times \SO(3)
\]
in terms of orthogonal unit vectors in $u, v \in \R^3$ and a
rotation $g \in \SO(3)$.  To lift this line to $S^2 \times \SU(2)$, we
must replace the rotation $g$ by a unit quaternion $q$ that maps down
to that rotation (there are two choices).  Similarly, we must replace
$\Ro(u \times v, (1+R) \Phi)$ by a unit quaternion that maps down to
this rotation.  The double cover $\SU(2) \to \SO(3)$ acts as follows:
\[       e^{\theta w/2} \mapsto \Ro(w, \theta) \]
for any unit vector $w \in \Im(\H) \cong \R^3$ and any angle $\theta \in \R$.  
Thus, the inverse image of the line $L$ under the map $p$ is
\[
\tilde{L} = 
\{ (\cos(\Phi) u + \sin(\Phi) v, 
\; e^{\frac{R+1}{2} \Phi (u \times v)} q) 
: \; \Phi \in \R \} \; \subset \; S^2 \times \SU(2)
\]
We can simplify this expression a bit by writing $u \times v$ as
$w$, so that $u,v,w$ is a right-handed orthonormal triple in
$\Im(\H)$.  Then
\[   \cos(\Phi) u + \sin(\Phi) v = e^{\frac{1}{2} \Phi w} u 
e^{-\frac{1}{2} \Phi w} \]
since this vector is obtained by rotating $u$ by an angle $\Phi$
around the axis $w$.  However, since $u$ and $w$ are orthogonal
imaginary quaternions, they anticommute, so we obtain
\[   \cos(\Phi) u + \sin(\Phi) v = e^{\Phi w} u .\]
Thus any line in $S^2 \times \SU(2)$ is of the form
\[
\tilde{L} =
\{ ( e^{\Phi w} u ,
 \; e^{\frac{R+1}{2} \Phi w} \, q) 
: \; \Phi \in \R \} 
\]
Even better, set $\theta = \Phi/2$.  Then we have
\[  \tilde{L} =
\{ ( e^{2\theta w} u , 
 \; e^{(R+1) \theta w} q) 
: \; \theta \in \R \} .
\qedhere \]
\end{proof}

\section{The rolling spinor on a projective plane}

We now consider a spinor rolling on a projective plane.  In other
words, we switch from studying lines on $S^2 \times \SU(2)$ to
studying lines on $\RP^2 \times \SU(2)$.  As before, these lines
depend on the radius $R$ of the rolling ball.

There is a double cover 
\[   q \maps S^2 \times \SU(2) \to \RP^2 \times \SU(2) \]
Since $S^2 \times \SU(2)$ was introduced as a double cover of the $S^2
\times \SO(3)$ in the first place, it may seem perverse to introduce
another space having $S^2 \times \SU(2)$ as a double cover:
\[\xymatrix{
& S^2 \times \SU(2)   \ar[dl]_p \ar[dr]^q 
\\ S^2 \times \SO(3) & & \RP^2 \times \SU(2) 
} \] 
However, $\RP^2 \times \SU(2)$ is not diffeomorphic to the original
rolling ball configuration space $S^2 \times \SO(3)$.  More
importantly, it \emph{is} diffeomorphic to the space of null lines
through the origin in $\Im(\H) \oplus \H$, a 7-dimensional vector
space equipped with a quadratic fom of signature $(3,4)$.

To see this, first recall from Section \ref{rolling_spinor} that a
point in $S^2 \times \SU(2)$ is a pair $(v,q)$ where $v$ is a unit
imaginary quaternion and $q$ is a unit quaternion.  So, a point in
$\RP^2 \times \SU(2)$ is an equivalence class consisting of two points
in $S^2 \times \SU(2)$, namely $(v,q)$ and $(-v,q)$.  We write this
equivalence class as $(\pm v, q)$.

We can describe a null line through the origin in $\Im(\H) \oplus \H$
in a very similar way.  First, note that $\Im(\H) \oplus \H$ has
a quadratic form $Q$ given by
\[ Q(a,b) = |a|^2 - |b|^2. \]
A \define{null vector} in this space is one with $Q(x) = 0$.
Let $C$ be the set of null vectors:
\[     C = \{ x \in \Im(\H) \oplus \H : \; Q(x) = 0 \} .\]
This is what physicists might call a \define{lightcone}.  However,
the signature of $Q$ is $(3,4)$, so this lightcone lives in an 
exotic spacetime with 3 time dimensions and 4 space dimensions.

Let $\PC$ be the corresponding \define{projective lightcone}:
\[    \PC = \{x \in C : x \ne 0\}/\R^*  \]
where $\R^*$, the group of nonzero real numbers, acts by rescaling the
cone $C$.  A point in $\PC$ can be identified with a 1-dimensional
\textbf{null subspace} of $\Im(\H) \oplus \H$, by which we mean a
subspace consisting entirely of null vectors.  We can write any
1-dimensional null subspace as $\langle x \rangle$, the span of any
nonzero null vector $x$ lying in that subspace.  We can always
normalize $x = (v,q)$ so that
\[   |v|^2 = |q|^2 = 1 .\]
The space of vectors $x$ of this type is $S^2 \times \SU(2)$, and two
such vectors $x$ and $x'$ span the same subspace if and only if $x' = \pm
x$.  So, we shall think of a point in $\PC$ as an equivalence class
of points in $S^2 \times \SU(2)$ consisting of the points $(v,q)$ and
$(-v,-q)$.  We write this equivalence class as $\pm (v,q)$.

\begin{prop}
There is a diffeomorphism 
\[     \tau \maps \RP^2 \times \SU(2) \to \PC \]
sending $(\pm v, q)$ to $\pm (v, v q)$.  
\end{prop}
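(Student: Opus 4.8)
The plan is to realize $\tau$ as the descent of an explicit diffeomorphism of the common double cover $S^2 \times \SU(2)$; then smoothness in both directions is automatic. First I would verify that $\tau$ is well defined. If $v \in S^2$ and $q \in \SU(2)$, then $vq \in \H$ has $|vq| = |v|\,|q| = 1$ because $\H$ is a normed division algebra, so $Q(v, vq) = |v|^2 - |vq|^2 = 0$; thus $(v, vq)$ is a normalized null vector and $\pm(v, vq)$ is a genuine point of $\P C$. Moreover the value depends only on the class $(\pm v, q)$: replacing $(v,q)$ by $(-v,q)$ sends $(v, vq)$ to $(-v, -vq)$, which spans the same $1$-dimensional null subspace.

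Next I would introduce the lift $\tilde\tau \maps S^2 \times \SU(2) \to S^2 \times \SU(2)$, $\tilde\tau(v,q) = (v, vq)$. By the previous paragraph this does land in $S^2 \times \SU(2)$, it is smooth since it is built from quaternion multiplication, and it has the smooth inverse $(v,w) \mapsto (v, \overline{v}\,w)$, using $v\overline{v} = |v|^2 = 1$; so $\tilde\tau$ is a diffeomorphism. Now I would check that $\tilde\tau$ intertwines the deck transformations of the two covers: the deck transformation of $S^2 \times \SU(2) \to \RP^2 \times \SU(2)$ is $(v,q) \mapsto (-v,q)$, while that of $S^2 \times \SU(2) \to \P C$ is $(v,w) \mapsto (-v,-w)$, and indeed $\tilde\tau(-v,q) = (-v,-vq)$ is obtained from $\tilde\tau(v,q) = (v,vq)$ by applying the latter. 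Hence $\tilde\tau$ descends to a bijection $\tau$ of the quotients fitting into a square with the two covering projections. Since those projections are local diffeomorphisms and $\tilde\tau$ is a diffeomorphism, $\tau$ is a diffeomorphism, and chasing the square gives exactly $(\pm v, q) \mapsto \pm(v, vq)$.

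I do not expect a real obstacle: this is essentially a bookkeeping argument. The two points that need genuine (if small) attention are that the image vectors really are null and unit-length, which uses the multiplicativity of the quaternion norm, and that the two double covers involved have deck groups that \emph{look} different --- $(v,q)\mapsto(-v,q)$ on one side versus $(v,w)\mapsto(-v,-w)$ on the other --- so the equivariance of $\tilde\tau$ must be checked rather than taken for granted. As an alternative to the descent argument, one can prove directly that $\tau$ is a bijection --- surjective because $\pm(v,q) \in \P C$ is hit by $(\pm v, \overline{v} q)$, injective because $v$ is invertible --- and then invoke the lift only to conclude that $\tau^{-1}$ is smooth.
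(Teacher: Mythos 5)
Your argument is correct and is essentially the same as the paper's: the paper checks directly that $\tau$ is well defined, writes down the inverse $\pm(v,q)\mapsto(\pm v, v^{-1}q)$, and asserts that both are smooth. Your lift to $\tilde\tau(v,q)=(v,vq)$ on the common double cover $S^2\times\SU(2)$, together with the verification that it intertwines the two distinct $\Z/2$ deck actions $(v,q)\mapsto(-v,q)$ and $(v,w)\mapsto(-v,-w)$, is just the careful bookkeeping that makes the paper's "easy to check" step explicit.
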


\begin{proof} 
First note that $\tau$ is well-defined: reversing the sign of 
$v$ reverses the sign of $(v,v q)$.   Next note that $\tau$ has
a well-defined inverse, sending $\pm (v, q)$ to $(\pm v , v^{-1}q)$.
It is easy to check that both $\tau$ and its inverse are smooth.
\end{proof}

There is a double cover
\[   q \maps S^2 \times \SU(2) \to \RP^2 \times \SU(2) \]
sending $(v,q)$ to the equivalence class $(\pm v, q)$.  We define a
\define{line} in $\RP^2 \times \SU(2)$ to be the image of a line in
$S^2 \times \SU(2)$ under this map $q$.  We then define a
\define{line} in $\PC$ to be the image of a line in $\RP^2 \times
\SU(2)$ under the diffeomorphism $\tau$.

In short, we can think of configurations and trajectories of a rolling
spinorial ball on a projective plane as points and `lines' in $\PC$.
But this concept of `line' depends on the radius $R$ of the ball.
When $R = 3$, these lines have a wonderful property: they come from
projectivizing planes inside the lightcone $C$.  To see this, we need
an explicit desciption of these lines:

\begin{prop}
\label{trajectory_3}
Fixing the radius $R$, every line in $\PC$ is of the form
\[ L=
\{ \pm(e^{2 \theta w} u, 
 e^{-(R-1) \theta w} u q) : \; \theta \in \R \} 
\; \subset \; \PC .  
\]
for some orthogonal unit vectors $u, w \in \Im(\H)$ and some
$q \in \SU(2)$.  
\end{prop}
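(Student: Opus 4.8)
The plan is to unwind the chain of definitions and transport an explicitly parametrized line through the two maps set up above. By definition a line in $\P C$ is the image under the diffeomorphism $\tau$ of a line in $\RP^2 \times \SU(2)$, which is itself the image under the double cover $q \maps S^2 \times \SU(2) \to \RP^2 \times \SU(2)$ of a line in $S^2 \times \SU(2)$. So I would start from Proposition \ref{trajectory_2}, which says every line in $S^2 \times \SU(2)$ has the form
\[
\tilde{L} = \{ ( e^{2 \theta \w} \u , \; e^{(R+1) \theta \w} q) : \; \theta \in \R \}
\]
for orthogonal unit vectors $\u, \w \in \Im(\H)$ and some $q \in \SU(2)$. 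Applying the covering map $q$ replaces the first coordinate $e^{2\theta\w}\u$ by its class, and then applying $\tau$, which sends $(\pm v, g)$ to $\pm(v, vg)$, produces the subset
\[
\{ \pm\bigl( e^{2 \theta \w} \u , \; (e^{2 \theta \w} \u)\, e^{(R+1) \theta \w}\, q \bigr) : \; \theta \in \R \} \subset \P C .
\]

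The one genuine computation is simplifying the second coordinate, and here I would reuse the observation that already drove the proof of Proposition \ref{trajectory_2}: since $\u$ and $\w$ are orthogonal imaginary quaternions they anticommute, so $\u\,\w = -\w\,\u$, hence $\u\, f(\w) = f(-\w)\, \u$ for any power series $f$, and in particular $\u\, e^{(R+1)\theta\w} = e^{-(R+1)\theta\w}\, \u$. Moving this past the leading exponential gives
\[
(e^{2\theta\w}\u)\, e^{(R+1)\theta\w} = e^{2\theta\w}\, e^{-(R+1)\theta\w}\, \u = e^{(1-R)\theta\w}\,\u = e^{-(R-1)\theta\w}\,\u ,
\]
so the line takes exactly the asserted shape $\{ \pm(e^{2\theta\w}\u,\; e^{-(R-1)\theta\w}\u q) : \theta \in \R \}$. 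To round things off I would observe that each listed vector does lie in $\P C$: both quaternion coordinates have norm $1$, so the vector is null and already normalized, and in any case this is automatic since $\tau$ lands in $\P C$. I would also note that the computation is reversible, so conversely every subset of this form is a line, built from the same data $\u, \w, q$ as the line $\tilde L$ above.

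I do not expect a real obstacle: the argument is essentially bookkeeping on top of Proposition \ref{trajectory_2} and the preceding results. The two things to be careful about are, first, keeping the two equivalence relations straight — in $\RP^2 \times \SU(2)$ the class $(\pm v, g)$ identifies $(v,g)$ with $(-v, g)$, whereas in $\P C$ the class $\pm(v,g)$ identifies $(v,g)$ with $(-v,-g)$ — so that the signs propagate correctly when $q$ is composed with $\tau$; and second, remembering that the only idea, as opposed to calculation, is the anticommutation identity $\u\, f(\w) = f(-\w)\, \u$, which turns the exponent $2-(R+1)$ into $-(R-1)$ and is precisely what makes the statement true.
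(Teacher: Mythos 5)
Your proof is correct and follows essentially the same route as the paper's: start from Proposition \ref{trajectory_2}, push through the covering map and then $\tau$, and simplify the second coordinate using the anticommutation $\u\,e^{(R+1)\theta\w} = e^{-(R+1)\theta\w}\,\u$ to turn $e^{2\theta\w}\u\,e^{(R+1)\theta\w}$ into $e^{-(R-1)\theta\w}\u$. The only difference is that you make explicit some bookkeeping (the two different sign conventions, the reversibility remark) that the paper leaves implicit.
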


\begin{proof} 
Recall from Proposition \ref{trajectory_2} that any line in $S^2
\times \SU(2)$ is of the form
\[
\{ ( e^{2 \theta w} u , \; e^{(R+1) \theta w} q) : \; \theta \in \R \} 
\]
where $u, w$ are orthogonal unit vectors in $\Im(\H)$ and $q \in \SU(2)$.
Thus, any line in $\RP^2 \times \SU(2)$ is of the form
\[
\{ (\pm e^{2 \theta w} u , \; e^{(R+1) \theta w} q) : \; \theta \in \R \} 
\]
and applying the map $\tau$, any line in $\PC$ is of the form
\[
\{ \pm(e^{2 \theta w} u, 
e^{2 \theta w} u \; e^{(R+1) \theta w} q) : \; \theta \in \R \} .
\]
Since $u$ and $w$ are orthogonal imaginary quaternions,
they anticommute, so we may rewrite this as
\[
\{ \pm(e^{2 \theta w} u, 
 e^{-(R-1) \theta w} u q) : \; \theta \in \R \} .
 \qedhere
\]
\end{proof}

Suppose $X \subset \Im(\H) \oplus \H$ is a 2-dimensional null
subspace.  Then we can projectivize it and get a curve in $\PC$:
\[    \Proj X = \{x \in X : x \ne 0\}/\R^* . \]
When $R = 3$, \emph{and only then}, every line in $\PC$ is a
curve of this kind:

\begin{thm} 
\label{thm:nullsubspace}
If and only if $R = 3$, every line in $\PC$ is the
projectivization of a 2-dimensional null subspace of $\Im(\H) \oplus \H$.
\end{thm}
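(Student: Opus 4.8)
The plan is to push the explicit description of lines from Proposition~\ref{trajectory_3} through a short trigonometric expansion, thereby reducing the statement to an elementary linear-algebra question about a curve in $\R^4$. So, fix a line in $\P C$; by Proposition~\ref{trajectory_3} it is the image in $\P C = (C \setminus \{0\})/\R^*$ of the curve
\[
\gamma(\theta) = \bigl(e^{2\theta\w}\u,\; e^{-(R-1)\theta\w}\u q\bigr) \in C, \qquad \theta \in \R .
\]
Complete $\u,\w$ to a right-handed orthonormal triple $\u,\v,\w$ of imaginary quaternions, so that $\w\u = \v$, and use $e^{2\theta\w} = \cos 2\theta + \sin 2\theta\,\w$ and $e^{-(R-1)\theta\w} = \cos((R-1)\theta) - \sin((R-1)\theta)\,\w$ to rewrite this as
\[
\gamma(\theta) = \bigl(\cos 2\theta\,\u + \sin 2\theta\,\v,\; \cos((R-1)\theta)\,\u q - \sin((R-1)\theta)\,\v q\bigr) .
\]
Since $\u,\v$ are orthonormal imaginary quaternions and right multiplication by $q \in \SU(2)$ is an isometry of $\H$, the pair $\u q,\v q$ is orthonormal in $\H$, so the whole curve lies in the $4$-dimensional subspace $V = \mathrm{span}_\R(\u,\v) \oplus \mathrm{span}_\R(\u q,\v q) \subseteq \Im(\H)\oplus\H$, on which $Q$ has signature $(2,2)$. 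In the obvious coordinates on $V$ we have $Q = x_1^2 + x_2^2 - x_3^2 - x_4^2$ and $\gamma(\theta) = \bigl(\cos 2\theta,\, \sin 2\theta,\, \cos((R-1)\theta),\, -\sin((R-1)\theta)\bigr)$.

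Now the line in $\P C$ is the projectivization of a $2$-dimensional subspace if and only if $\gamma(\R)$ spans a $2$-dimensional subspace $X \subseteq V$ and $\theta \mapsto [\gamma(\theta)]$ surjects onto $\P X$, and the spanning condition alone already pins down $R$. Indeed, $a\cos 2\theta + b\sin 2\theta$ is a sinusoid of frequency $2$ (or zero), so $\bigl(\cos((R-1)\theta), -\sin((R-1)\theta)\bigr)$ can be an $\R$-linear function of $\bigl(\cos 2\theta, \sin 2\theta\bigr)$ for all $\theta$ only when the frequency $|R-1|$ equals $2$. To make this rigorous and dispose of all remaining values at once, I would compute the rank of $\{\gamma(0),\gamma'(0),\gamma''(0),\gamma'''(0)\}$: writing $\mu = R-1$ these are $(1,0,1,0)$, $(0,2,0,-\mu)$, $(-4,0,-\mu^2,0)$, $(0,-8,0,\mu^3)$, whose span is $4$-dimensional unless $\mu(\mu^2-4)=0$ and is $3$-dimensional when $\mu = 0$. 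Hence for every positive $R \neq 3$ the curve $\gamma$ spans at least a $3$-dimensional subspace of $V$, so the line is not even contained in a projective line, let alone equal to the projectivization of a null $2$-plane. (The excluded root $\mu = -2$, i.e.\ $R = -1$, also makes $\gamma(\R)$ span a null $2$-plane, but is ruled out since $R$ is a radius.)

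When $R = 3$, on the other hand, $\mu = 2$ and $\gamma(\theta) = \cos 2\theta\,(1,0,1,0) + \sin 2\theta\,(0,1,0,-1)$, so $\gamma(\R)$ spans the $2$-plane
\[
X = \mathrm{span}_\R\bigl((\u,\,\u q),\,(\v,\,-\v q)\bigr) = \bigl\{\,\bigl(s\u+t\v,\,(s\u-t\v)q\bigr) : s,t\in\R\,\bigr\} ,
\]
which is null because $Q\bigl(s\u+t\v,\,(s\u-t\v)q\bigr) = (s^2+t^2) - (s^2+t^2)\,|q|^2 = 0$; moreover $\theta\mapsto[\gamma(\theta)]$ surjects onto $\P X$ since $(\cos 2\theta,\sin 2\theta)$ traces out the full unit circle. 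Thus, when $R = 3$, every line in $\P C$ equals $\P X$ for a $2$-dimensional null subspace $X$, which finishes the proof. The only delicate point is the \emph{only if} direction: one must make sure that no accidental coincidence — in particular at $R = 1$, where the second slot of $\gamma$ degenerates to a constant — produces a projective line; the rank computation above settles all such cases uniformly.
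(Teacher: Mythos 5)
Your proof is correct, and for the \emph{only if} direction it takes a genuinely different route from the paper. The paper's argument assumes the line is the projectivization of a null subspace $X$, then shows that the nullity constraint $x\cdot y = 0$ fails for the pair $x = \gamma(0)$, $y = \gamma(\theta)$ unless $\cos(2\theta) = \cos((1-R)\theta)$ identically, forcing $R = 3$. You instead drop the nullity condition entirely and attack the \emph{dimension}: by restricting to the $(2,2)$-signature subspace $V = \mathrm{span}(\u,\v)\oplus\mathrm{span}(\u q,\v q)$ and computing the rank of $\bigl(\gamma(0),\gamma'(0),\gamma''(0),\gamma'''(0)\bigr)$, you show that for $R \neq 3$ (and $R>0$) the curve $\gamma$ cannot lie in \emph{any} 2-plane at all, let alone a null one. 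This is a stronger intermediate fact and, arguably, a more mechanical one: it reduces the ``only if'' step to a $4\times 4$ determinant that factors as $-2\mu(4-\mu^2)^2$. What the paper's proof buys is a direct view of the obstruction as a failure of orthogonality in the Lorentzian metric, which fits its subsequent development of the dot-product/cross-product formalism; what yours buys is an argument that doesn't require polarizing $Q$ first and that simultaneously disposes of the degenerate $R=1$ case by the same rank computation rather than as a side remark. Both proofs handle the $R=3$ direction identically, by expanding the exponentials and exhibiting the spanning null pair $(\u,\u q)$, $(\w\u,-\w\u q)$, and both correctly discard the spurious root $R=-1$. One minor point of precision: the derivative test only bounds the span of $\gamma(\R)$ from below, so at $\mu=0$ you have shown the span is \emph{at least} 3-dimensional, which is what is actually needed; it happens also to be exactly 3-dimensional, but your sentence should be read as asserting the lower bound.
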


\begin{proof}
To prove this, it helps to polarize $Q$ and introduce a \define{dot
product} on $\Im(\H) \oplus \H$, namely the unique symmetric bilinear
form such that
\[ x \cdot x = Q(x) . \]
A subspace $X \subset \Im(\H) \oplus \H$ is null precisely when
this bilinear form vanishes on $X$.  We will need an explicit
formula for this bilinear form:
\[  (a,b) \cdot (c,d) = a \cdot c - b \cdot d  \]
where at right $\cdot$ is the usual dot product on $\H$:
\[ a \cdot b = \Re(\overline{a} b) . \]
We will also need to recall that the dot product of imaginary
quaternions is the same as the usual dot product on $\R^3$. 

Now consider an arbitrary line $L \subset \PC$.  By Proposition
\ref{trajectory_3} this is of the form
\[
L = \{ \pm(e^{2 \theta w} u,  e^{-(R-1) \theta w} u q) 
: \; \theta \in \R \} 
\]
for some orthogonal unit vectors $u, w \in \Im(\H)$ and $q \in
\SU(2)$.  Assume that $L$ is the projectivization of some null
subspace $X \subset \Im(\H) \oplus \H$.  Then every pair of vectors 
$x, y \in X$ must have $x \cdot y = 0$.  We now show that this
constrains $R$ to equal 3.

Indeed, letting $\theta = 0$, one such vector is 
\[   x = (u, u q) , \]
while letting $\theta$ be arbitrary, another is 
\[   y = (e^{2\theta w} u, e^{(1-R)\theta w} u q) .\]
We have
\[  \begin{array}{ccl}
 x \cdot y &=& 
u \cdot e^{2\theta w} u - u q \cdot e^{(1-R)\theta w} u q \\
&=& u \cdot e^{2\theta w} u - u \cdot e^{(1-R)\theta w} u 
\end{array}
\]
where in the second step we note that right multiplication by a
unit quaternion preserves the dot product.  Since
$e^{2\theta w} u$ is $u$ rotated by an angle $2\theta$ about the 
$w$ axis, which is orthogonal to $u$, we have
\[    u \cdot e^{2\theta w} u  = \cos (2 \theta) .\]
Similarly 
\[   u \cdot e^{(1-R)\theta w} u = \cos (2(1-R)\theta) .\]
To ensure $x \cdot y = 0$, we thus need
\[ \cos(2\theta) = \cos((1-R) \theta) . \]
This must hold for all $\theta$, so we need $1 - R = \pm 2$. 
Since we are assuming the rolling ball has positive radius, we
conclude $R = 3$. 

On the other hand, suppose that $R = 3$. Then any line in $\PC$ 
has the form:
\[
L = \{ \pm(e^{2 \theta w} u,  e^{-2 \theta w} u q) 
: \; \theta \in \R \} .
\]
Expanding the exponentials:
\[ 
\begin{array}{ccl}
 (e^{2 \theta w} u,  e^{-2 \theta w} u q) &=& 
(\cos(2 \theta) u + \sin(2\theta) w u, \; \cos(2\theta) u q - 
\sin(2 \theta) w u q) \\
 & = & \cos(2 \theta) (u,u q) + \sin(2\theta) ( w u, -w u q) .
\end{array}
\]
we see that this vector lies in the 2-dimensional null
subspace spanned by the orthogonal null vectors $(u, u q)$
and $(w u, -w u q)$.  Thus, $L$ is the projectivization
of a 2-dimensional null subspace.
\end{proof}

Assume $R = 3$.  Then every line in $\PC$ is the projectivization of
a 2-dimensional null subspace.  But the converse is false: not every
2-dimensional null subspace gives a line in $\PC$ when we
projectivize it.  Which ones do?  The answer requires us to introduce
the split octonions!  As we shall see in the next section, it is
precisely the 2-dimensional `null subalgebras' of the split octonions
that give lines in $\PC$.

\section{Split octonions and the rolling ball}

We have seen that the configuration space for a rolling spinorial ball
on a projective plane is the projective lightcone $\PC$.
We have also seen that the lines in this space are especially
nice when the ratio of radii is 1:3.  To go further, we
now identify $\Im(\H) \oplus \H$ with the imaginary split octonions.
This lets us prove that when the ratio of radii is 1:3, lines in
$\PC$ can be defined using the algebra structure of the split
octonions.  Thus, automorphisms of the split octonions act to give
symmetries of the configuration space that map lines to lines.  This
symmetry group is $\G_2'$, the split real form of $\G_2$.

Every simple Lie group comes in a number of forms: up to covers, there
is a unique complex form, as well as a compact real form and a split
real fom.  Some groups have additional real forms: any real Lie group
whose complexification is the complex form will do. For $\G_2$,
however, there are only the three forms.  Each is the automorphism
group of some 8-dimensional composition algebra---in other words, some
form of the octonions.

A \define{composition algebra} $A$ is a possibly nonassociative algebra
with a multiplicative unit 1 and a nondegenerate quadratic form $Q$
satisfying 
\[ Q(xy) = Q(x) Q(y)  \]
for all $x,y \in A$.  This concept makes sense over any field.  Right
now we only need real composition algebras, but in the next section we
will need a complex one.  

Up to isomorphism, there are just two 8-dimensional real composition
algebras, and their automorphism groups give the two real forms of
$\G_2$:
\begin{itemize}
\item 
The \define{octonions}, $\O$, is the vector space $\H \oplus \H$
with the product
\[ (a,b)(c,d) = (ac - d\overline{b}, \overline{a}d + cb) . \]
This becomes a composition algebra with the positive
definite quadratic form given by
\[ Q(a,b) = |a|^2 + |b|^2. \]
The automorphism group of $\O$ is the compact 
real form of $\G_2$, which we denote simply as $\G_2$.  This group
is simply-connected and has trivial center.
\item 
The \define{split octonions}, $\O'$, is the vector space $\H \oplus \H$
with the product
\[ (a,b)(c,d) = (ac + d\overline{b}, \overline{a}d + cb) . \]
This becomes a composition algebra with the nondegenerate
quadratic form of signature $(4,4)$ given by
\[ Q(a,b) = |a|^2 - |b|^2. \]
The automorphism group of $\O'$ is the split
real form of $\G_2$, which we denote as $\G_2'$.  More precisely,
this is the adjoint split real form, which has fundamental group $\Z_2$
and trivial center.  There is also a simply-connected split real form
with center $\Z_2$. 
\end{itemize}

It is the split octonions, $\O'$, that are the most closely connected
to the rolling ball.  As with $\H$, the quadratic form on $\O'$ can 
also be defined using conjugation. If we take
\[ \overline{(a,b)} = (\overline{a}, -b) . \]
then we can check that
\[ Q(x) = x\overline{x} = \overline{x} x. \]
This conjugation satisfies some the same nice properties as quaternionic
conjugation:
\[ \overline{\overline{x}} = x, \quad \overline{xy} = \overline{y} 
\, \overline{x} . \]
We define the \define{imaginary split octonions} by
\[ \I = \{ x \in \O' : \; \overline{x} = -x \} = \Im(\H) \oplus \H . \]
Since conjugation in $\O'$ is invariant under all the automorphisms of
$\O'$, the same is true of the subspace $\I$, so we obtain a
7-dimensional representation of $\G'_2$.  This is well-known to be an
irreducible representation.  The quadratic form $Q$ has signature
$(3,4)$ when restricted to $\I$.

As promised at the start of this section, the lightcone in $\Im(\H) \oplus
\H$ now lives in $\I$, the imaginary split octonions: 
\[ C \subset \I . \]
Moreover, because $\G'_2$ preserves the quadratic form on $\I$, it
acts on $C$, as well as its projectivization:
\[    \PC = \{x \in C : x \ne 0\}/\R^*  .\]
We have already seen how to view this space as the configuration space
of a spinor rolling on a projective plane, and how to describe the
rolling trajectories in that configuration space for any ratio of
radii.  We now show, when that ratio is 1:3, the action of $\G'_2$
preserves these rolling trajectories.

We define a \define{null subalgebra} of $\O'$ to be a vector subspace
$V \subset \O'$ on which the product vanishes.  In other words, $V$
is closed under addition, scalar multiplication by real numbers, and
$xy = 0$ whenever $x,y \in V$.  Such a subalgebra clearly does
\emph{not} contain the unit $1 \in \O'$.  In fact, because the square
of an element with nonzero real part cannot vanish, any null
subalgebra must be purely imaginary.  It must also be a null subspace
of the imaginary split octonions, since $Q(x) = x\overline{x} = -x^2 = 0$
for an imaginary split octonion in a null subalgebra.  Thus, the
projectivization of a null subalgebra gives a subset of the projective
lightcone, $\PC$.

\begin{thm} 
\label{thm:lines_and_2d_null_subalgebras}
Suppose $R = 3$.  Then any line in $\PC$ is the projectivization of
some 2d null subalgebra of $\O'$, and conversely, the projectivization
of any 2d null subalgebra gives a line in $\PC$.
\end{thm}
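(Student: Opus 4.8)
The plan is to prove both directions by reducing everything to explicit quaternionic computations, using the formula for the split octonion product and the description of lines from Proposition \ref{trajectory_3}. By Theorem \ref{thm:nullsubspace} we already know that, when $R = 3$, every line in $\P C$ is the projectivization of some 2d null \emph{subspace} of $\I = \Im(\H) \oplus \H$, spanned by the orthogonal null vectors $(\u, \u q)$ and $(\w\u, -\w\u q)$. So the first half of the theorem will follow if I can show that this particular 2d null subspace is actually a null \emph{subalgebra}, i.e.\ that the split octonion product vanishes on it. For that I would compute $(\u,\u q)(\u,\u q)$, $(\w\u,-\w\u q)(\w\u,-\w\u q)$, and the two mixed products $(\u,\u q)(\w\u,-\w\u q)$ and $(\w\u,-\w\u q)(\u,\u q)$ directly from the formula $(a,b)(c,d) = (ac + d\overline{b}, \overline{a}d + cb)$, using that $\u,\w$ are orthogonal imaginary unit quaternions (so $\u^2 = \w^2 = -1$, $\overline{\u} = -\u$, $\w\u = -\u\w$), that $q$ is a unit quaternion (so $q\overline{q} = 1$), and that $\overline{\u q} = \overline{q}\,\overline{\u} = -\overline{q}\u$. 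The squares vanish because each spanning vector is null; the point is to check the mixed products cancel — the $\H \oplus \H$ first and second components should each reduce to something like $\w\u^2 - \u q\,\overline{\w\u q}$, and after moving conjugations through one gets cancellation precisely because the ``+'' sign in the split octonion multiplication matches the ``$-$'' sign in the signature $(3,4)$ quadratic form. This sign-matching is really the content of the $R = 3$ condition reappearing.

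For the converse, I would start from an arbitrary 2d null subalgebra $X \subset \O'$. Since it is null it is purely imaginary and lies in $C$, so $\P X$ is a curve in $\P C$; I must show it is a \emph{line} in the sense defined via Proposition \ref{trajectory_3}, i.e.\ of the form $\{\pm(e^{2\theta\w}\u, e^{-2\theta\w}\u q)\}$. The strategy is: pick any nonzero $x = (a,b) \in X$; since $x$ is null and we can rescale, normalize $|a| = |b| = 1$, and since $b = aa^{-1}b$ with $a$ a unit imaginary quaternion we may write $x = (\u, \u q)$ with $\u = a/|a|$ a unit imaginary quaternion and $q = a^{-1}b \in \SU(2)$ — this is exactly the normal form the lines take at $\theta = 0$. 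Now pick a second vector $x' \in X$ linearly independent from $x$, again normalized and null; the condition that $x, x'$ span a null \emph{subalgebra} forces $xx' = x'x = 0$, and I claim this algebraic constraint pins down $x'$ to be a scalar multiple of $(\w\u, -\w\u q)$ for some unit imaginary $\w$ orthogonal to $\u$. Concretely, writing $x' = (c,d)$, the vanishing of $xx'$ and $x'x$ gives equations relating $c, d$ to $\u, q$; combined with $c \cdot c = d \cdot d$ (nullity) and $a \cdot c = b \cdot d$ (orthogonality to $x$, which actually follows from $x + x'$ being null together with $x, x'$ null), these should force $c = \w\u$ and $d = -\w\u q$ up to scaling, with $\w \perp \u$. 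Then $X = \langle (\u,\u q), (\w\u, -\w\u q)\rangle$, and by the computation in the proof of Theorem \ref{thm:nullsubspace} (read backwards) its projectivization is exactly the line $\{\pm(e^{2\theta\w}\u, e^{-2\theta\w}\u q) : \theta \in \R\}$.

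The main obstacle I anticipate is the converse direction, specifically solving the system ``$xx' = 0$ and $x'x = 0$'' cleanly for $x'$ given $x = (\u,\u q)$. The split octonions are nonassociative, so I cannot just ``divide,'' and I will have to manipulate the two $\H$-valued component equations by hand, repeatedly using $q\overline{q} = 1$, the anticommutation of orthogonal imaginary quaternions, and the conjugation identity $\overline{xy} = \overline{y}\,\overline{x}$. It may be cleanest to first handle the special case $q = 1$ (so $x = (\u, \u)$) to isolate the structure, then reduce the general case to it via the right multiplication $(a,b) \mapsto (a, bq^{-1})$ — but I would need to check this map sends null subalgebras to null subalgebras, which it need not do in general since it is not an algebra automorphism; more likely I should instead use an actual automorphism of $\O'$ coming from $\G_2'$ to move a given null subalgebra to a standard one, invoking (and perhaps proving first) that $\G_2'$ acts transitively on normalized null vectors. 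If that transitivity is available, the converse collapses to checking a single standard case, which is a short computation. I would organize the proof to make that reduction explicitly, so that the only genuine calculation is the vanishing of the product on the one standard 2d null subspace — which is also what the forward direction needs.
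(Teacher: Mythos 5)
Your plan coincides with the paper's proof in the forward direction and is essentially the same proof in the converse as well, but your converse has a small gap that is worth naming precisely. Given $x = (\u, \u q)$ and a second vector $x' = (c,d) \in X$, the equations $xx' = 0$ and $x'x = 0$ alone do \emph{not} force $c$ to be of the form $\w\u$ with $\w\perp\u$: they only determine $d$ as a function of $c$ (one finds $d = -\u c\u q$ for any imaginary $c$), and indeed if $c$ has a component along $\u$ you simply get a multiple of $x$ added on. The missing step --- which the paper takes --- is to replace $x'$ by $x' - \lambda x$ for a suitable scalar $\lambda$ so that the \emph{first} quaternionic component of the new $x'$ is orthogonal to $\u$. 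This is legitimate since you are merely choosing a better basis of the same $2$d space $X$, and once you do it, writing $c = \w\u$ with $\w \perp \u$ is automatic, after which $xx' = 0$ forces $d = -\w\u q$. (Note also that for purely imaginary split octonions $yx = \overline{xy}$, so $x'x = 0$ is automatic once $xx' = 0$; you only need to impose one of the two product conditions.) Your suggested detour through $\G_2'$-transitivity on null vectors should be avoided: transitivity in this paper is established in Theorem~\ref{thm:orbits}, which in turn rests (via Theorems~\ref{thm:torsor}, \ref{thm:nulltriple}, \ref{thm:incidence_real}) on the very line/null-subalgebra equivalence you are trying to prove, so invoking it here would be circular. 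The elementary basis adjustment is both simpler and logically clean.
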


\begin{proof}
Let $L$ be a line in $\PC$.  By Proposition \ref{trajectory_3} this is
of the form
\[
L = \{ \pm(e^{2 \theta w} u,  e^{-2\theta w} u q) 
: \; \theta \in \R \} 
\; \subset \; \PC   
\]
when $R = 3$.  By Theorem \ref{thm:nullsubspace}, $L$ is a
projectivization of a 2-dimensional null subspace $X \subset \Im(\H)
\oplus \H$.  This subspace is spanned by any two linearly independent
vectors in $X$, so putting $\theta = 0$ and $\theta = \frac{\pi}{4}$
in our formula for $L$, we have:
\[ X = \langle (u, u q), (w u, -w u q) \rangle . \]
We claim that $X$ is a null subalgebra.  To prove this, it suffices to
check that the product of any two vectors in this basis vanishes.
Because both vectors are null and imaginary, their squares
automatically vanish:
\[ Q(u, u q) = (u, u q) \overline{(u, u q)} = -(u, u q)^2 = 0, \]
and similarly for $(w u, -w u q)$. It thus remains to show that their
product vanishes:
\[
\begin{array}{ccl}
(u, u q) (w u, -w u q) &=& 
(u w u + (-w u q)\overline{u q}, \overline{u} (-w u q) + w u u q) \\
 &=& (u w u - w , u w u q - w q) \\
 &=& 0 , \\
\end{array}
\]
where we used the fact that the unit imaginary quaternion $u$ anticommutes
with $w$.  Thus $X$ is a 2-dimensional null subalgebra.

On the other hand, given a 2-dimensional null subalgebra $X$, we wish
to show that its projectivization gives a line in $\PC$.  To prove this
it suffices to show that $X$ has the form
\[ X = \langle (u, u q), (w u, -w u q) \rangle . \]
for some orthogonal unit imaginary quaternions $u$ and $w$ and unit
quaternion $q$, since then reversing the calculation above shows that the
projectivization of $X$ is a curve in $\PC$ of this form:
\[
L = \{ \pm(e^{2 \theta w} u,  e^{-2\theta w} u q) 
: \; \theta \in \R \} .
\]

So, fix any nonzero vector $x \in X$. It is easy to check that $x =
(u, u q)$ for some imaginary quaternion $u$ and quaternion $q$. By
rescaling, we can assume $u$ has unit length, forcing $q$ to also
have unit length, since $x$ is null.

Next choose any linearly independent vector $y = (v,v') \in X$. By
subtracting a multiple of $x$ from $y$, we can ensure the first
component of $y$ is orthogonal to the first component of $x$.  By
rescaling the result, we can also assume that $v$ and $v'$ both have
unit length.  We can thus obtain $v$ from $u$ by multiplication by a
unit quaternion orthogonal to them both, say $w$:
\[ v = w u . \]
In summary, we have:
\[ X = \langle (u, u q), (w u, v') \rangle . \]
Finally, because $X$ is a null subalgebra, we must have $xy = 0$, and
this forces $v' = -w u q$.  Indeed:
\[ xy = (u, u q) (w u, v') = 
(u w u + v' \overline{u q}, \, \overline{u} v' + w u u q) , \]
and a quick calculation shows this vanishes if and only if $v' = - w u q$, 
as
desired.
\end{proof}

\begin{cor}
When $R = 3$, the group $\G'_2$ acts on $\PC$ in a way that maps
lines to lines.
\end{cor}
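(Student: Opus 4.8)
The plan is to deduce this corollary directly from Theorem \ref{thm:lines_and_2d_null_subalgebras} together with the fact, recalled earlier in the paper, that $\G_2'$ is by definition the automorphism group of $\O'$ and preserves the quadratic form $Q$ on $\I$. First I would observe that since $\G_2'$ preserves $Q$, it preserves the lightcone $C \subset \I$ and hence acts on the projective lightcone $\P C$; this action is smooth and sends the projectivization of a subspace $X$ to the projectivization of $gX$. So the only thing to check is that lines are carried to lines.

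Second, I would invoke Theorem \ref{thm:lines_and_2d_null_subalgebras}: when $R = 3$, lines in $\P C$ are \emph{exactly} the projectivizations of $2$-dimensional null subalgebras of $\O'$. Hence it suffices to show that $\G_2'$ maps $2$-dimensional null subalgebras to $2$-dimensional null subalgebras. But this is immediate from the definition of an automorphism: if $g \in \G_2'$ and $X \subset \O'$ is a subspace with $xy = 0$ for all $x, y \in X$, then for $gx, gy \in gX$ we have $(gx)(gy) = g(xy) = g(0) = 0$, so $gX$ is again a null subalgebra, and it is $2$-dimensional since $g$ is invertible. Therefore $g$ maps $\P X$ to $\P(gX)$, a line, and since $g^{-1}$ also does this, lines go to lines bijectively.

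I do not expect any real obstacle here: the corollary is a formal consequence of the characterization of lines already established, plus the tautological fact that algebra automorphisms preserve the property of being a null subalgebra. The only minor point worth spelling out is that the $\G_2'$-action on $\P C$ is well defined in the first place — i.e.\ that $\G_2'$ preserves $C$ — which follows because automorphisms of $\O'$ preserve conjugation, hence preserve $Q(x) = x\overline{x}$, hence preserve $\I$ and the subset $C \subset \I$ of null vectors. This was already noted in the text preceding the corollary, so I would simply cite it.

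\begin{proof}
Since $\G_2'$ is the automorphism group of $\O'$, it preserves conjugation and hence the quadratic form $Q(x) = x\overline{x}$, so it preserves the imaginary split octonions $\I$, the lightcone $C \subset \I$, and thus acts on $\P C$, sending the projectivization of a subspace $X \subset \I$ to the projectivization of $gX$. Now let $g \in \G_2'$ and let $X \subset \O'$ be a $2$-dimensional null subalgebra. For any $x, y \in X$ we have $(gx)(gy) = g(xy) = 0$, so $gX$ is again a null subalgebra, and it is again $2$-dimensional since $g$ is a linear isomorphism. By Theorem \ref{thm:lines_and_2d_null_subalgebras}, when $R = 3$ the lines in $\P C$ are precisely the projectivizations of $2$-dimensional null subalgebras of $\O'$, so $g$ maps every line in $\P C$ to a line. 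Applying the same argument to $g^{-1}$ shows this map on lines is a bijection.
\end{proof}
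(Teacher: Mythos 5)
Your proof is correct and is exactly the argument the paper intends: the paper gives the corollary with no written proof, treating it as an immediate consequence of Theorem \ref{thm:lines_and_2d_null_subalgebras}, and your argument—automorphisms preserve $2$-dimensional null subalgebras, which are precisely the lines—is the obvious intended deduction. Nothing further is needed.
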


Knowing that the lines in $\PC$ correspond to 2-dimensional null
subalgebras of the space $\I$ of imaginary split octonions, we can use
operations on $\I$ to study the incidence geometry of $\PC$.  The
concepts here we also apply to the complexification $\PC^\C$, which
we study in Section \ref{cross product}.  Thus, we state them in a
way that applies to both cases.

First, because the lines in $\PC$ are projectivizations of 2d null
subalgebras, it will be very helpful for us to understand the
\define{annihilator} of a null imaginary split octonion, $x$:
\[ \Ann_x = \{ y \in \I : \; yx = 0 \} . \]
This subspace of $\I$ is intimately related to set of lines through $\langle x
\rangle \in \PC$: any point $y \in \Ann_x$ linearly independent of $x$ will
span a 2d null subalgebra with $x$, which in turn projectivizes to give a line
through $\langle x \rangle$. So: understanding the annihilator is crucial for
understanding how other points are connected to $\langle x \rangle$ via lines,
and this will move to the center of our focus in Section \ref{cross product}.

\begin{prop}
	\label{lem:annihilator}
	Let $x \in C$ be a nonzero null vector. Then we have:
	\begin{enumerate}
		\item $\Ann_x$ is a null subspace.
		\item Any two elements of $\Ann_x$ anticommute.
		\item $\Ann_x$ is three-dimensional.
	\end{enumerate}
\end{prop}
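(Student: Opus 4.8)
The plan is to work with a concrete null vector and use the explicit product formula for $\O'$, exploiting the fact that $\G_2'$ acts transitively on $\P C$ (or, more elementarily, that any nonzero null vector can be normalized to the form $x=(\u,\u q)$ as was done in the proof of Theorem~\ref{thm:lines_and_2d_null_subalgebras}). So I would first reduce to the case $x=(\u,\u q)$ with $\u$ a unit imaginary quaternion and $q$ a unit quaternion; scaling $x$ only rescales $\Ann_x$, so all three claims are scale-invariant and this costs nothing.

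Next I would compute $\Ann_x$ directly. Writing a general element as $y=(b,c)$ with $b\in\Im(\H)$, $c\in\H$, the condition $yx=0$ unpacks via $(b,c)(\u,\u q)=(b\u+\u q\,\overline{c},\,\overline{b}\,\u q+\u c)$ into two quaternionic equations. Here the second component is the cleaner one: $\u c=-\overline{b}\,\u q$, i.e. $c=-\u^{-1}\overline{b}\,\u q=\u\,\overline{b}\,\u q$ (using $\u^{-1}=-\u$ for a unit imaginary quaternion). This expresses $c$ as a linear function of $b$, so $\Ann_x$ injects into the $3$-dimensional space $\Im(\H)$ of possible $b$'s; I expect the first component equation to be automatically satisfied once the second holds (this is forced because $x$ is null and $\O'$ is a composition algebra — the left ideal generated by a null element has a predictable dimension — but I would just verify it by a short direct substitution). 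That pins down $\Ann_x$ as exactly $3$-dimensional, proving part (3), and simultaneously gives an explicit parametrization $y(b)=(b,\u\overline{b}\,\u q)$ for $b\in\Im(\H)$.

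With this parametrization in hand, parts (1) and (2) become calculations with the explicit product. For (1), I compute $y(b)\,y(b')$ and check it vanishes for all $b,b'\in\Im(\H)$ — equivalently, since $\Ann_x$ is spanned by $x$ together with two more basis vectors, check the product vanishes on a basis; the computation in Theorem~\ref{thm:lines_and_2d_null_subalgebras} already did this for a two-dimensional sub-piece and the same anticommutation tricks for imaginary quaternions ($\u b=-b\u$ when orthogonal, $b^2=-|b|^2$, etc.) will handle the general case. Part (2) is then nearly immediate: for null imaginary split octonions, $\overline{y}=-y$, so $y y'+y' y=\overline{\phantom{x}}$-manipulations give $y y'+y' y=-(\overline{y'}\,\overline{y}+\overline{y}\,\overline{y'})=\ldots$; more simply, once $\Ann_x$ is a null subalgebra, for $y,y'\in\Ann_x$ we have $0=(y+y')^2=y^2+y'^2+yy'+y'y=yy'+y'y$ since $y^2=y'^2=0$, which is exactly the anticommutation statement. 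I expect the main obstacle to be the bookkeeping in verifying that the first-component equation is redundant and that the product genuinely vanishes on all of $\Ann_x\times\Ann_x$ rather than just on the spanning pair I happen to pick — in other words, making sure no sign or noncommutativity slip creeps in — but there is no conceptual difficulty; the composition-algebra identity $Q(xy)=Q(x)Q(y)$ and its polarization provide a clean sanity check at each stage.
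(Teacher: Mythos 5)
Your parametrization for part~3---normalizing $x=(\u,\u q)$, solving the second component of $yx=0$ for $c=\u\,\overline{b}\,\u q$, and checking the first component is then automatic---is sound and is essentially the paper's own computation (which uses the equivalent normalization $x=(u,q)$ and the equivalent condition $xy=0$, since $yx=\overline{xy}$ for imaginary elements).

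However, there is a genuine error in your plan for part~1: you conflate ``null subspace'' with ``null subalgebra.'' A null \emph{subspace} is one on which the quadratic form, equivalently the dot product, vanishes identically; it is \emph{not} required that the octonionic product vanish. Your proposal to verify $y(b)\,y(b')=0$ for all $b,b'\in\Im(\H)$ must fail: by your own part~3, $\Ann_x$ is $3$-dimensional, whereas the maximal dimension of a null subalgebra of $\I$ is two (a fact used in Proposition~\ref{prop:extensiontonulltriple}). Concretely, with $x=(\u,\u q)$ and a unit $\w\perp\u$, take $y_1=y(\w)=(\w,-\w q)$ and $y_2=y(\u\w)=(\u\w,-\u\w q)$; the first component of $y_1y_2$ is $\w(\u\w)+(-\u\w q)\overline{(-\w q)}=\w\u\w+\u\w\overline{\w}=2\u\neq 0$. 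This also infects your part~2, which you base on the premise ``once $\Ann_x$ is a null subalgebra.'' Fortunately your polarization argument $(y+y')^2=y^2+y'^2+yy'+y'y$ only needs each element of $\Ann_x$ to \emph{square} to zero, i.e.\ to be null, which is exactly the correct content of part~1, so part~2 does follow once part~1 is repaired. The repair is easy and fits your coordinate approach: $Q\bigl(y(b)\bigr)=|b|^2-|\u\,\overline{b}\,\u q|^2=0$, since left and right multiplication by unit quaternions preserve the norm; polarizing gives $y(b)\cdot y(b')=0$. For comparison, the paper handles parts~1 and~2 without any parametrization, in one stroke, via alternativity: using $y\cdot y'=-\tfrac12(yy'+y'y)$ and the alternating associator, $(yy')x+(y'y)x=y(y'x)+y'(yx)+[y,y',x]+[y',y,x]=0$, so $(y\cdot y')x=0$ and hence $y\cdot y'=0$. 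Your coordinate route, once corrected, is a valid alternative for parts~1--2, trading the associator identity for explicit quaternion bookkeeping.
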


\begin{proof}

First we show that $\Ann_x$ is a null subspace.  Consider two elements
$y, y' \in \Ann_x$. In fact, because the dot product of two imaginary
split octonions is proportional to their anticommutator:
\[ y \cdot y' = -\frac{1}{2}(y y' + y' y) , \]
we can show that $y$ and $y'$ are orthogonal and anticommute in one blow,
proving parts 1 and 2.

Indeed, the real number $-2(y \cdot y')$ vanishes if and only if its
product with a nonzero vector vanishes. We consider its product with
$x$, since $y$ and $y'$ annihilate $x$ by definition:
\[ -2(y \cdot y')x 
= (yy')x + (y'y)x = y(y'x) + y'(yx) + [y,y',x] + [y',y,x] = 0 . \]
where $[x,y,z] = (xy)z - x(zy)$ is the \define{associator}.  The first
two terms are zero because $y$ and $y'$ annihilate $x$.  The last two
terms cancel since the associator is antisymmetric in its three
arguments, thanks to the fact that the split octonions are alternative
\cite{Schafer}.

To prove part 3 and show that $\Ann_x$ is 3-dimensional, write the
imaginary split octonion $x$ as a pair $(u,q) \in \Im(\H) \oplus \H$.
Since rescaling the null vector $x$ does not change $\Ann_x$, we may
assume without loss of generality that it is normalized so that $u \,
\overline{u} = q \, \overline{q} = 1$.  We shall show that $\Ann_x$ is
isomorphic to the vector space of imaginary quaternions, $\Im(\H)$.
To do this, let $y$ be any element of $\Ann_x$, and write it as a pair
$(c,d)$.  Then
\[   xy = (uc + d\overline{q}, \overline{u}d + cq) . \]
This expression vanishes if and only if $d = -ucq$.  Thus $y =
(c,-ucq)$, and the map
\[ 
\begin{array}{cccl}
f \maps & \Im(\H) & \to     & \Ann_x \\
& c & \mapsto & (c,-ucq) 
\end{array}
\]
is an isomorphism of vector spaces.  \end{proof}

For some familiar geometries, such as that of a projective space, any
two points are connected by a line. This is not true for $\PC$,
however.  We can see this using the rolling ball description: as the
ball rolls along a great circle from one point of contact to another,
it rotates in a way determined by the constraint of rolling without
slipping or twisting.  If our initial and final configurations do not
differ by this rotation, there is no way to connect them by a single
rolling motion.   In general we need multiple rolls to connect  two
configurations, so we give the following definition:

\begin{defn}
\label{defn:rolls}
We say that two points $a, b$ are \define{at most $n$ rolls away} if
there is a sequence of points $a_0 , a_1, \dots, a_n$ such that the
$a_0 = a$, $a_n = b$, and for any two consecutive points
there is a line containing those two points.  We say $a$ and $b$ are
\define{$n$ rolls away} if $n$ is the \emph{least} number for which
they are at most $n$ rolls away.
\end{defn}
\noindent

Note that because there is a line containing any point, if $a$ and $b$
are at most $n-1$ rolls away, they are also at most $n$ rolls away.
The following basic facts hold both for $\PC$ and its
complexification:

\begin{prop} 
\label{prop:rolls}
We have:
\begin{enumerate}
\setcounter{enumi}{-1}
\item Two points $a$ and $b$ are zero rolls away if and only if $a = b$. 
\item Two points $a$ and $b$ are one roll away if and only if there is a 
line containing them but $a \ne b$.
\item Two points $a$ and $c$ are two rolls away if and only if there 
exists a unique point $b$ such that:
\begin{itemize}
\item there is a line containing $a$ and $b$,
\item there is a line containing $b$ and $c$.
\end{itemize}
\end{enumerate}
\end{prop}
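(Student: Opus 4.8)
The plan is to verify each of the three clauses in turn, using the definition of ``at most $n$ rolls away'' (Definition \ref{defn:rolls}) together with the annihilator description of lines from Proposition \ref{lem:annihilator}. Throughout, I work with the normalization that a point of $\P C$ is $\langle x\rangle$ for a nonzero null $x\in\I$, and I repeatedly use the fact --- established in the proof of Theorem \ref{thm:lines_and_2d_null_subalgebras} --- that a line through $\langle x\rangle$ is exactly the projectivization of a $2$d null subalgebra containing $x$, and that such subalgebras are spanned by $x$ together with some $y\in\Ann_x$ linearly independent of $x$.

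For clause $(0)$: if $a=b$ the constant sequence $a_0=a$ works, so $a$ and $b$ are zero rolls away. Conversely, ``zero rolls away'' means the sequence has length one, i.e.\ $a_0=a=b=a_n$. This is immediate from the definition and requires only unwinding notation.

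For clause $(1)$: ``at most one roll away'' means there is a sequence $a_0=a$, $a_1=b$ with a line containing $a_0$ and $a_1$; combined with clause $(0)$, ``exactly one roll away'' adds $a\ne b$. The content is purely the definition: $a$ and $b$ are one roll away iff some line contains both and $a\ne b$. The only subtlety worth a sentence is that a single line does contain the pair even when $a=b$ (every point lies on a line), which is why the ``$a\ne b$'' clause is needed to pin down \emph{exactly} one roll rather than zero.

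For clause $(2)$, which I expect to be the main obstacle, I must show: $a$ and $c$ are two rolls away iff there is a \emph{unique} $b$ lying on a line with $a$ and on a line with $c$. The ``only if'' direction is the delicate half. Suppose $a$ and $c$ are two rolls away. By definition there exists at least one such $b$; I must rule out $a=c$ (else they'd be zero rolls away) and rule out $a,c$ being one roll away (else they'd be one roll away, not two), and then prove \emph{uniqueness} of $b$. Write $a=\langle x\rangle$, $c=\langle z\rangle$. An intermediate point $\langle y\rangle$ lies on a line with $\langle x\rangle$ iff $y\in\Ann_x$ (linearly independent of $x$, which follows once $\langle y\rangle\ne\langle x\rangle$), and on a line with $\langle z\rangle$ iff $y\in\Ann_z$. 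So the set of valid intermediate points corresponds to $\langle y\rangle$ with $y\in\Ann_x\cap\Ann_z$. Each of $\Ann_x$, $\Ann_z$ is $3$-dimensional inside the $7$-dimensional $\I$; if $x$ itself lay in $\Ann_z$ then $a$ and $c$ would already be one roll away (take $b$ as a third point, or directly: $xz=0$ gives a line through both), contradiction, and symmetrically $z\notin\Ann_x$. I will show under the ``two rolls away'' hypothesis that $\Ann_x\cap\Ann_z$ is exactly one-dimensional --- spanned by a single null vector $y$ --- so that $\langle y\rangle$ is the unique intermediate point. The dimension count is the crux: generically two $3$-planes in a $7$-space meet only in $0$, so I need the geometric input that forces a nontrivial (hence, I claim, exactly $1$-dimensional) intersection. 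The key algebraic lever is that $\Ann_x$ is itself a null subspace on which all elements anticommute (Proposition \ref{lem:annihilator}), so if it met $\Ann_z$ in a $2$-plane, that $2$-plane would be a $2$d null subalgebra containing neither $x$ nor $z$ but lying in both annihilators, which I will show forces $x$ and $z$ to be one roll away --- again a contradiction. For the ``if'' direction: given a unique such $b$, the two lines through $a,b$ and through $b,c$ exhibit a length-two sequence, so $a$ and $c$ are at most two rolls away; uniqueness of $b$ (and $b\ne a$, $b\ne c$, which I extract from ``unique'' --- if $b=a$ then $a$ lies on a line with $c$, making many choices of $b$ valid, contradicting uniqueness, and similarly $b\ne c$) rules out $a=c$ and rules out $a$ being one roll from $c$, so they are exactly two rolls away. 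The main work, and the step most likely to need care, is the intersection-dimension argument showing ``nontrivial intersection of annihilators $\Leftrightarrow$ at most two rolls, with the intersection line unique when they are not one roll away''; I will handle this by combining the $3$-dimensionality of annihilators with the null-subalgebra structure, reducing to the explicit quaternionic model $\Ann_x\cong\Im(\H)$ from Proposition \ref{lem:annihilator} if a direct argument proves awkward.
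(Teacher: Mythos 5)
Your overall plan is the same as the paper's: parts (0) and (1) are definitional, and part (2) hinges on the $3$-dimensionality of annihilators from Proposition~\ref{lem:annihilator} together with the fact that a null subspace of $\I$ has dimension at most $3$.  The only difference is one of framing: you argue directly that $\Ann_x\cap\Ann_z$ is exactly $1$-dimensional, whereas the paper fixes one midpoint $b=\langle y\rangle$, observes $\Ann_y=\langle x,y,z\rangle$, and then shows any second midpoint $y'$ lies in a null span $\langle x,y,y',z\rangle$ that would otherwise be $4$-dimensional, extracting $y'=\beta y$ from the products $xy'$ and $y'z$.  These are the same dimension count in two coordinate systems.

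One step in your sketch, however, is asserted but not true and should be dropped: a $2$-plane $\langle y_1,y_2\rangle\subset\Ann_x\cap\Ann_z$ need not be a $2$d null \emph{subalgebra}.  Elements of $\Ann_x$ are mutually orthogonal and anticommute (Proposition~\ref{lem:annihilator}), so $y_1y_2=y_1\times y_2$, but this cross product has no reason to vanish; indeed $\Ann_{x\times y}=\langle x,x\times y,y\rangle$ with $xy\ne 0$ is a standard example of a maximal null subspace that contains non-annihilating pairs.  The fix is not hard and keeps the spirit of your argument: any nonzero $y_1\in\Ann_x\cap\Ann_z$ satisfies $y_1x=0=y_1z$, so $x,z\in\Ann_{y_1}$, a null subspace, whence $x\cdot z=0$; therefore if the intersection contained a $2$-plane $\langle y_1,y_2\rangle$ not meeting $\langle x\rangle$ or $\langle z\rangle$, the span $\langle x, y_1, y_2, z\rangle$ would be a $4$-dimensional null subspace of $\I$, impossible.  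With that replacement your proof of part (2) is sound and parallel to the paper's.
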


\begin{proof}  Part 0 is immediate from a careful reading of 
Definition \ref{defn:rolls}.  Part 1 then follows.  For part 2, first
suppose $a$ and $c$ are two rolls away.  Since they are at least two
rolls away, for some point $b$ there is a line containing $a$ and $b$
and a line containing $b$ and $c$.  We must show the point $b$ with
this property is unique.  

Suppose $b'$ were another such point. Let us write $a = \langle x
\rangle$, $b = \langle y \rangle$, $b' = \langle y' \rangle$, and $c =
\langle z \rangle$.  We know $x, z \in \Ann_y$, since $\langle x, y
\rangle$ and $\langle y, z \rangle$ are 2d null subalgebras: the 2d
null subalgebras that projectivize to give the lines joining $a$ and
$b$ and $b$ and $c$. Now, if $\langle x, y, z \rangle$ is itself
two-dimensional, then we have:
\[ \langle x, y \rangle = \langle x, y, z \rangle = \langle y,z \rangle , \]
whence $x$ and $z$ are contained in a 2d null subalgebra and $a$ and
$c$, connected by a line, are actually one roll apart. So we must have
$\langle x, y, z \rangle$ three-dimensional, and hence $x, y$ and $z$
are linearly independent. In fact, we must have:
\[ \Ann_y = \langle x, y, z \rangle , \]
since, by Proposition \ref{lem:annihilator}, $\Ann_y$ is three-dimensional.

Similarly, $\Ann_{y'} = \langle x, y', z \rangle$. In particular,
since annihilators are null subspaces by Proposition \ref{lem:annihilator},
$y'$ is orthogonal to $x$ and $z$. Moreover, since, $y$ and $y'$ both
annihilate $x$, $y$ and $y'$ are also orthogonal. Thus, $\langle x, y,
y', z \rangle$ is null, but because the maximal dimension of a null
subspace of the 7-dimensional space $\I$ is three, $y'$ must be a
linear combination of the other vectors:
\[ y' = \alpha x + \beta y + \gamma z . \]
Multiplying by $x$:
\[ x y' = \gamma xz = 0 . \]
We must have $xz \neq 0$, otherwise $a$ and $c$ are joined by the line
obtained from the 2d null subalgebra $\langle x, z \rangle$, so this
implies $\gamma = 0$. Similarly, because $y' z = 0$, we can conclude
$\alpha = 0$. Thus $y' = \beta y$. In other words, $b = \langle y
\rangle = \langle y' \rangle = b'$.

Conversely, suppose there exists a unique point $b$ such that $a$ and $b$
lie on a line and $b$ and $c$ lie on a line.  Then clearly $a$ and $c$
are at most two rolls away.  Suppose they were at most one roll away.
Then there would be a line containing $a$ and $c$.  There are
infinitely many points on this line, contradicting the uniqueness of
$b$.  Thus, $a$ and $c$ are exactly two rolls away. \end{proof}

Given nonzero $x, y \in C$, how can we tell how many rolls away
$\langle x \rangle$ is from $\langle y \rangle$?  We can use the dot
product and cross product of imaginary split octonions.  We have
already defined the \define{dot product} of split octonions by
polarizing the quadratic form $Q$:
\[       x \cdot x = Q(x) , \]
but on $\I$ it is proportional to the anticommutator:
\[        x \cdot y = - \frac{1}{2} \left( x y + y x \right), \]
as easily seen by explicit computation.  Similarly, we define the
\define{cross product} of imaginary split octonions to be half the
commutator:
\[        x \times y =  \frac{1}{2} \left( x y - y x \right). \]
For $x,y \in \I$ we have
\[         x y = x \times y - x \cdot y  \]
where $x \times y$ is an imaginary split octonion and $x \cdot y$ is 
a multiple of the identity. 

\begin{thm} 
\label{thm:incidence_real}
Suppose that $\langle x \rangle, \langle y \rangle \in \PC$.  Then:

\begin{enumerate}
\item $\langle x \rangle$ and $\langle y \rangle$ are at most one roll
away if and only if $x y = 0$, or equivalently, $x \times y = 0 $.
\item $\langle x \rangle$ and $\langle y \rangle$ are at most two rolls
away if and only if $x \cdot y$ = 0.
\item $\langle x \rangle$ and $\langle y \rangle$ are always at most
three rolls away.
\end{enumerate}

\end{thm}

\begin{proof} 
For part 1, first recall that by definition, $\langle x \rangle$ is at
most one roll away from $\langle y \rangle$ if and only if $\langle x,
y \rangle$ is a null subalgebra.  This happens if and only if $xy = 0$
and $yx = 0$.  But
\[   yx = \overline{y}\, \overline{x} = \overline{xy}  \]
since for the imaginary split octonions $x$ and $y$, we have $\overline{x} =
-x$ and $\overline{y} = -y$. Thus, it is enough to say $xy = 0$.

Next let us show that $xy = 0$ if and only if $x \times y = 0$.  If
$xy = 0$, then $yx = 0$ as well by the above calculation, so $x \times
y$, being half the commutator of $x$ and $y$, is also zero.

For the converse, suppose $x \times y = 0$.  Then $x$ and $y$ commute,
so $xy = - x \cdot y$.  Thus, it suffices to show $x \cdot y
= 0$.  Since $x \ne 0$, it is enough to show $(x \cdot y)x = 0$.  For
this we use the fact that the split octonions are
\define{alternative}: the subalgebra generated by any two elements is
associative \cite{Schafer}.  The subalgebra generated by $x$ and $y$
is thus associative and commutative, so indeed
\[   (x \cdot y) x = -\frac{1}{2}(x y + y x)x = -x^2 y = (x \cdot x)y = 0 \]
where in the last step we use the fact that $x$ is null.  

For part 2, first suppose that $\langle x\rangle$ and $\langle z
\rangle$ are at most two rolls away.  Then there is a point $\langle y
\rangle \in \PC$ that is at most one roll away from $\langle
x\rangle$ and also from $\langle z \rangle$.  Thus we know $x y = 0 =
z y$ by part 1.  We wish to conclude that $x \cdot z = 0$. But this
follows because $x, z \in \Ann_y$, and annihilators are null subspaces
by Proposition \ref{lem:annihilator}.

For the converse suppose $x \cdot z = 0$.  If $xz = 0$ we are
done, since by part 1 it follows that $x$ and $z$ are at most one roll
away.  If $xz \ne 0$ we can take $\langle xz \rangle \in \PC$, 
and we claim this point is at most one roll away from $\langle x \rangle$
and also from $\langle z \rangle$.   To check this, by part 1 it suffices
to show $x(xz) = 0$ and $(xz)z = 0$.  But since the split octonions are
alternative, we have
\[     x(xz) = x^2 z = -(x \cdot x) z = 0 \]
since $x$ is null.  Similarly $(xz) z = 0$ since $z$ is null.

For part 3, now let us show that every pair of points in $\PC$ is at most
three rolls away.  It suffices to show that given $\langle x \rangle, \langle z
\rangle \in \PC$, there exists $\langle y \rangle$ that is at most one roll
away from $\langle x \rangle$ and at most two rolls away from $\langle z
\rangle$.  Thus, by parts 1 and 2, we need to find a nonzero null imaginary
octonion $y$ with $xy = 0$ and $y \cdot z = 0$.

By Proposition \ref{lem:annihilator}, the space $\Ann_x$ of $y$ with
$xy = 0$ is 3-dimensional. Thus the linear map:
\[ \begin{array}{ccc} 
		\Ann_x & \to & \R \\
		y & \mapsto & y \cdot z 
\end{array}
\]
has at least a two-dimensional kernel, guaranteeing the existence of the
desired $y$.  \end{proof}

\section{Null triples and incidence geometry}
\label{sec:nulltriples}

Next, we shall use our octonionic description of the rolling spinor to
further investigate its incidence geometry.  To do this, we introduce
a tool we call a `null triple'.

\begin{defn} 
A \define{null triple} is an ordered triple of nonzero null imaginary
split octonions $x,y,z \in \I$, pairwise orthogonal, obeying the 
normalization condition:
\[ (x \times y) \cdot z = \frac{1}{2} . \]
\end{defn}

We shall show that any null triple generates $\I$ under the cross product, so
the action of an automorphism $g \in \G_2'$ of the split octonions is
determined by its action on a null triple.  In fact, in Theorem
\ref{thm:torsor}, we prove that the set of all null triples is a
\define{$\G_2'$-torsor}: given two null triples, there exists a unique element
of $\G_2'$ carrying the first to the second.

Null triples are well suited to the incidence geometry of the rolling
spinor because they are null, so that each member of the triple
projectivizes to give a point of $\PC$.  We shall see that these
points are all two rolls away from each other.  The relationship to
$\G_2$ runs deeper, however, as one can see by examining the cross
product multiplication table we describe below, which is conveniently
plotted as a hexagon with an extra vertex in the middle:
\[
\xy
(-21.5,12.5)*{\bullet}="X";
(0,25)*{\bullet}="XY";
(21.5,12.5)*{\bullet}="Y";
(21.5,-12.5)*{\bullet}="YZ";
(0,-25)*{\bullet}="Z";
(-21.5,-12.5)*{\bullet}="ZX";
(0,0)*{\bullet}="O";
{\ar@{->}"X";"XY"};
{\ar@{->}"XY";"Y"};
{\ar@{->}"Y";"YZ"};
{\ar@{->}"YZ";"Z"};
{\ar@{->}"Z";"ZX"};
{\ar@{->}"ZX";"X"};
{\ar@{->}"XY";"O"};
{\ar@{->}"O";"Z"};
{\ar@{->}"YZ";"O"};
{\ar@{->}"O";"X"};
{\ar@{->}"ZX";"O"};
{\ar@{->}"O";"Y"};
"X"+(-2,2)*{x};
"XY"+(0,3)*{x \times y};
"Y"+(2,2)*{y};
"YZ"+(2,-3)*{y \times z};
"Z"+(0,-3)*{z};
"ZX"+(-2,-3)*{z \times x};
"O"+(12,0)*{2 (x \times y) \times z};
\endxy
\]
The resemblance to the weight diagram of the 7-dimensional irreducible
representation $\I$ of $\G_2'$ is no accident!  Indeed, for any such
decomposition into weight spaces, three nonadjacent vertices of the outer
hexagon will be spanned by a null triple.  

We begin by showing that we can use the above hexagon to describe both
the dot and cross product in $\I$ starting with a null triple.  The
arrows on this hexagon help us keep track of the cross product.  For
convenience, we speak of `vertices' when we mean the basis vectors in
$\I$ corresponding to the seven vertices in this diagram.  For the commutative
dot product:

\begin{itemize}
\item All six outer vertices are null vectors.  
\item Opposite pairs of vertices have dot product $\frac{1}{2}$.
\item Each outer vertex is orthogonal to all the others except
its opposite. 
\item The vertex in the middle is orthogonal to all the outer
vertices, but it is not null. Instead, its dot product with itself
is $-1$.
\end{itemize}
As for the anticommutative cross product:
\begin{itemize}
\item The cross product of adjacent outer vertices is zero. 
\item For any two outer vertices that are neither adjacent nor
opposite, their cross product is given by the outer vertex between
them if they are multiplied in the order specified by the orientation of
the arrows.   For example, $(z \times x) \times (x \times y) = x$.
\item The cross product of opposite outer vertices, multiplied in the order
specified by the orientation, is half the vertex in the middle.
\item  The cross product of the vertex in the middle and an outer
vertex gives that outer vertex if they are multiplied
in the order specified by the orientation.
\end{itemize}

Now, let us prove these claims:
\begin{thm}
\label{thm:nulltriple}
Given a null triple $(x,y,z)$, the following is a basis for $\I$:
\[ x, \quad y, \quad z, \quad x \times y, \quad y \times z, \quad z \times x, \quad 2(x \times y) \times z . \]
In terms of this basis, the dot and cross product on $\I$ take the form
described above.
\end{thm}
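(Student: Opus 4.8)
The plan is to reduce the theorem to a short, bounded list of explicit octonionic computations, using only the defining properties of a null triple plus two standard facts about imaginary split octonions.

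First I would extract the simplifications forced by the hypotheses. Since $x,y,z$ are pairwise orthogonal and null, and since the octonion product of imaginary elements is $ab = a\times b - a\cdot b$, we get $x^2 = y^2 = z^2 = 0$, the equalities $xy = x\times y$, $yz = y\times z$, $zx = z\times x$, and the anticommutation relations $xy = -yx$, $yz = -zy$, $zx = -xz$ (orthogonality being precisely the vanishing of the symmetric parts). Two general identities then carry the rest of the argument: (i) for imaginary $a,b$ one has $a\times(a\times b) = (a\cdot b)\,a - (a\cdot a)\,b$, a one-line consequence of the left alternative law $a(ab) = a^2 b$ together with $a\cdot(a\times b) = (a\times b)\cdot a = 0$; and (ii) the trilinear form $\phi(a,b,c) = (a\times b)\cdot c$ on $\I$ is totally antisymmetric, a standard fact for composition algebras. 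I would also invoke the Moufang identities, valid because $\O'$ is alternative.

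For the dot product table I would argue as follows. All six outer vertices are null: $Q(x\times y) = Q(xy) = Q(x)Q(y) = 0$, and cyclically. The entries saying that opposite pairs of outer vertices pair up to $\frac12$ are exactly the normalization $(x\times y)\cdot z = \frac12$ read around the hexagon via the antisymmetry of $\phi$, and the entries saying a vertex of the triple is orthogonal to its non-opposite outer neighbours are either hypotheses ($x\cdot y = 0$) or instances of $\phi$ with a repeated slot ($(x\times y)\cdot x = \phi(x,y,x) = 0$). Only three facts need real work: $(x\times y)\cdot(y\times z) = 0$, the orthogonality of the central vertex $m = 2(x\times y)\times z$ to every outer vertex (expand and use antisymmetry of $\phi$ together with identity (i), noting $z\times(z\times x) = 0$ and the like), and $m\cdot m = -1$. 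For the last, write $m$ as the commutator $(xy)z - z(xy)$, square it, and use the Moufang identities $(pz)(zp) = p\,z^2\,p = 0$ and $(zp)(pz) = z\,p^2\,z = 0$ (with $p = xy$, both vanishing because $z^2 = 0 = p^2$), while keeping track of the real part $-\frac12$ that $(xy)z$ carries. Assembling these, the Gram matrix of the seven vectors, in the ordering that pairs each vertex with its opposite, is block diagonal with three copies of $\bigl(\begin{smallmatrix}0 & 1/2\\ 1/2 & 0\end{smallmatrix}\bigr)$ and a final entry $-1$; its determinant is $1/64 \ne 0$, so the seven vectors are linearly independent and, since $\dim\I = 7$, form a basis. (Its signature comes out $(3,4)$, as it must.)

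For the cross product table, the adjacent entries are $x\times(x\times y) = (x\cdot y)\,x - (x\cdot x)\,y = 0$ and its mirrors. The ``two steps apart'' entries, such as $(z\times x)\times(x\times y)$, follow by rewriting the corresponding octonion products with the anticommutation relations until a Moufang identity $(ab)(ca) = a(bc)a$ applies, collapsing the expression (up to sign) to $x(yz)x$, which identity (i) evaluates to $-x$; this yields $(z\times x)\times(x\times y) = x$ and its two cyclic companions $(x\times y)\times(y\times z) = y$ and $(y\times z)\times(z\times x) = z$. The ``opposite'' entries $x\times(y\times z)$ and the central entries $m\times x$ succumb to the same toolkit, reducing to $\pm\frac12 m$ and to $\pm x$ respectively. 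Once the previous step has furnished a basis and a non-degenerate dot product, a clean way to read off every cross product is to note that $u\times v$ is pinned down by the numbers $(u\times v)\cdot b = -\Re\big((uv)b\big)$ as $b$ runs over the basis, and $uv$ for $u,v$ among our seven vectors is one of the short products already computed.

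The obstacle here is not conceptual but bookkeeping: the handful of Moufang computations --- above all $m\cdot m = -1$, the cross term $(x\times y)\cdot(y\times z) = 0$, and the hexagon product $(z\times x)\times(x\times y) = x$ --- are short but unforgiving, and the recurring trap is to treat products like $(x\times y)z$ or $(x\times y)(y\times z)$ as purely imaginary when in fact each carries a real part equal to $-\frac12$ times the relevant value of $\phi$. The hexagon diagram is exactly the device that keeps this organized; modulo it, everything reduces to identity (i), the antisymmetry of $\phi$, and the alternative/Moufang laws.
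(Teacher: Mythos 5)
Your proposal is correct, and while it lands in the same place as the paper it travels by a noticeably different route in several spots. The paper establishes linear independence of the seven vectors by showing that $\Ann_{x\times y}$ and $\Ann_z$ are complementary $3$-dimensional null subspaces (reusing the incidence machinery from Proposition~\ref{lem:annihilator}) and then exhibiting $w=2(x\times y)\times z$ as an orthogonal complement; you instead compute the full Gram matrix, observe it is block-diagonal with three hyperbolic $2\times 2$ blocks and a final $-1$, and read off nondegeneracy, linear independence \emph{and} the signature $(3,4)$ in one stroke. That is more self-contained and buys a sanity check for free. For the nullity of $x\times y$, the paper uses the alternative law directly, whereas you use the composition property $Q(xy)=Q(x)Q(y)$, which is cleaner. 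The sharpest divergence is at the middle vertex: the paper packages $1,\; z+x\times y,\; z-x\times y,\; w$ into a copy of the split quaternions and lets the quaternion relations determine all products with $w$, a slick trick; your plan is to square $w=(xy)z-z(xy)$ with the middle Moufang identity killing the cross terms and careful tracking of the real part $-\tfrac12$ of $(xy)z$, which is more pedestrian but equally valid (the residual $(pz)^2+(zp)^2$ expands to $\tfrac12 m^2+\tfrac12$, giving $m^2=1$). Your closing observation --- that once the Gram matrix is nondegenerate, each $u\times v$ is pinned down by the numbers $(u\times v)\cdot b$ over the basis, and these come from the handful of already-computed octonionic products --- is a genuinely useful consolidation the paper does not exploit. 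One small caution: your identity~(i), $a\times(a\times b)=(a\cdot b)a-(a\cdot a)b$, takes care of the adjacent entries cleanly, but for the ``two steps apart'' products like $(z\times x)\times(x\times y)$ the reduction via Moufang to something like $x(yz)x$ is not literally a single application of identity~(i); it needs one more step (splitting $(yz)x$ into its imaginary and real parts and applying (i) to the imaginary piece), and the signs are, as you say, unforgiving --- the paper's explicit chain of equalities is more careful there.
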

\begin{proof}

We start by computing the dot product of outer vertices: that is, vectors
corresponding to vertices on the outside of the hexagon.  Then we compute the
cross product of outer vertices.  Next we compute the dot and cross product of
all the outer vertices with the middle vertex, and the dot product of the
middle vertex with itself.  Finally, we verify that the above vectors are
indeed a basis.

First, let us check that each outer vertex is a null vector. For $x$, $y$, and
$z$ this is true by the definition of a null triple, so we need only
check it for the other three.  It suffices to consider $x \times y$.
Since $x$ and $y$ are orthogonal we have $x \times y = xy = -yx$, so
using the alternative law we have
\[  
(x \times y)(x \times y) = -(xy)(yx) = (x(y^2))x = 0 .\]
This implies that the dot product of $x \times y$ with itself vanishes.

Next we check that both the dot and cross product of two adjacent outer vertices
vanishes.  It suffices to consider $x$ and $x \times y = xy$.  Since
$x$ is null, the alternative law gives $x(xy) = x^2 y = 0$ as desired.
So, the dot and cross product of adjacent outer vertices both vanish.
In other words, by Theorem \ref{thm:incidence_real}, they give points
in $\PC$ that are one roll away or less.

Thus, by the same theorem, outer vertices that are not opposite give
points in $\PC$ that are two rolls away or less.  It follows from
this theorem that such vertices are orthogonal.  

It remains to compute the dot product of opposite outer vertices.  By
the definition of null triple, the opposite vertices $z$ and $x
\times y$ have dot product $\frac{1}{2}$.  Let us check that this
forces other opposite vertices to also pair to $\frac{1}{2}$, for
instance $x$ and $y \times z$:
\begin{eqnarray*}
	-2(x \cdot (y \times z) ) & = & x(y \times z) + (y \times z) x \\
	& = & x(y z) + (y z) x \\
	& = & x(y z) -  (z y) x \\
	& = & (xy) z -  z (y x) - [ x, y, z ] - [ z, y, x ]  \\
	& = & (x \times y) z + z(x \times y) \\
	& = & -2(z \cdot (x \times y)) \\
	& = & -1 .
\end{eqnarray*}
In the fifth line, we use the fact that the associator, $[x,y,z] =
(xy)z - x(yz)$, is antisymmetric in its three arguments, thanks to
alternativity \cite{Schafer}. A very similar calculation shows that the third
opposite pair, $y$ and $z \times x$, also have dot product $\frac{1}{2}$.

Next we turn to the cross product of outer vertices.  We have already
seen that adjacent outer vertices have vanishing cross product.  For
vertices that are neither adjacent nor opposite, we need to show
their cross product gives the outer vertex between them if they are
multiplied in the order specified by the orientation of the arrows.
This is true by definition in three cases.  The other three cases
require a calculation.  For example, consider the cross product of
$y \times z$ and $z \times x$:
\begin{eqnarray*}
	(y \times z) \times (z \times x) & = & (yz)(zx) \\
	& = & (zy)(xz) \\
	& = & z(yx)z \\
	& = & z(y \times x)z \\
	& = & -z^2 (x \times y) + z(z(y \times x) + (y \times x)z) \\
	& = & - 2z (z \cdot (y \times x)) \\
	& = & z 
\end{eqnarray*}
where in the last step we use $(x \times y) \cdot z = \frac{1}{2}$ and
in the third step, we use a \define{Moufang identity}:
\[ (zy)(xz) = z(yx)z . \]
which holds in any alternative algebra \cite{Schafer}.  We can omit
some parentheses here thanks to alternativity.  Very similar
calculations apply for other pairs of vertices that are neither
opposite nor adjacent.

The cross product of opposite vertices equals half the vertex in the middle,
if we multiply them in the correct order.   That is, we claim:
\[ (x \times y) \times z = (y \times z) \times x = (z \times x) \times y . \]
In fact, this follows from the following identity:
\[ (u \times v ) \times u = 0 \] 
when $u$ and $v$ are null and orthogonal. Before verifying this identity, we
show how it implies the claim. Note that $x+z$ is null and orthogonal to the
null vector $y$. Thus, by the identity:
\[ ((x + z) \times y) \times (x + z) = 0 . \]
Using bilinearity to expand this expression, we get:
\[ (x \times y) \times x + (x \times y) \times z + (z \times y) \times x + (z \times y) \times z = 0 . \]
The first and last terms vanish by the identity, implying:
\[ (x \times y) \times z = (y \times z) \times x \]
as desired. A similar calculation shows this equals $(z \times x) \times y$. 

Let us verify that $(u \times v) \times u = 0$ when $u$ and $v$ are null and
orthogonal. Since $u$ and $v$ are orthogonal, they anticommute. Thus $u \times
v = uv$.  By the definition of the cross product:
\[ (uv) \times u = \frac{1}{2}((uv)u - u(uv)) = -u^2 v = (u \cdot u) v = 0 . \]
where we have made use of the anticommutativity of $u$ and $v$, along
with the fact that the split octonions are alternative, so that the
subalgebra generated by any two elements is associative \cite{Schafer}.

Next we compute the dot and cross product of each outer vertex with the middle
vertex, which we shall call $w$:
\[     w = 2 (x \times y) \times z. \]
To do this, we claim that these vectors:
\[ 1, \quad i = z + (x \times y) , \quad j = z - (x \times y), \quad k = i \times j = w \]
span a copy of the split quaternions in $\O'$. Before we verify this claim, let
us show how it determines the dot and cross product of the outer vertices with
$w$. In the split quaternions, $k$ is orthogonal to both $i$ and $j$, which
happens if and only if:
\[ w \cdot z = 0 , \quad w \cdot (x \times y) = 0. \]
Moreover, $k \times i = j$, which happens if and only if:
\[ w \times z = z, \quad w \times (x \times y) = -(x \times y), \]
The other cases work the same way, since we have shown that $(x \times y)
\times z$ is unchanged by cyclic permutations of the factors $x$, $y$ and $z$. 

Now let us check the claim that the  $1$, $i$, $j$ and $k$ as defined above
really do span a copy of the split quaternions. We need only check that $i^2 =
-j^2 = -1$, and that $i$ and $j$ anticommute, since it then follows that $k = i
\times j = ij$ anticommutes with $i$ and $j$ and squares to 1. For $i$, we have:
\begin{eqnarray*}
	i^2 & = & (z + x \times y)^2 \\
	& = & z^2 + z(x \times y) + (x \times y)z + (x \times y)^2 \\
	& = & -2z \cdot (x \times y) \\
	& = & -1
\end{eqnarray*}
where have used the fact that $z$ and $x \times y$ are null and pair to
$\frac{1}{2}$. A similar calculation shows $j^2 = 1$, and a further quick
calculation shows that $i$ and $j$ are orthogonal, and hence anticommute as
desired. As a bonus, we obtain the dot product of the middle vertex
$w$ with itself, because:
\[ w \cdot w = -w^2 = -k^2 = -1 . \]

Finally, let us verify that the seven vertices give a basis of $\I$.
Because $x$ and $y$ both give points of $\PC$ one roll away from the
point corresponding to $x \times y$, it follows from Proposition
\ref{lem:annihilator} that
\[ \Ann_{x \times y} = \langle x, x \times y, y \rangle . \]
It follows that these three vectors are linearly independent, since
they span a 3d null subspace.  Similarly:
\[ \Ann_z = \langle z \times x, z, y \times z \rangle . \]
Moreover, these annihilators must be complementary: any nonzero
element of $\Ann_{x \times y} \cap \Ann_z$ gives a point at most roll
away from $\langle x \times y \rangle$ and $\langle z \rangle$,
contradicting the fact that by Theorem \ref{thm:incidence_real} 
these points are more than two rolls apart, since the cross product 
$x \times y$ and $z$ is nonzero.  Thus the
vector space spanned by these two annihilators must be 6-dimensional,
and it remains to find a seventh vector independent of the six basis
vectors already named.  Since $w = 2 (x \times y) \times z$ is orthogonal to
both $\Ann_{x \times y}$ and $\Ann_z$, it is the seventh independent vector.
\end{proof}

After the hard work of proving the previous theorem, the next is
a direct consequence:

\begin{thm}
\label{thm:torsor}
The set of null triples is a $\G_2'$-torsor: given two null triples
$(x,y,z)$ and $(x',y',z')$, there exists a unique element of $g \in
\G_2'$ taking one to the other:
\[ (gx, gy, gz) = (x',y',z') . \]
\end{thm}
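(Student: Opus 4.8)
The plan is to prove transitivity and freeness separately, both by leveraging Theorem \ref{thm:nulltriple}. For transitivity, the key observation is that $\G_2'$ is the automorphism group of $\O'$, hence preserves the cross product and dot product on $\I$ (the cross product is built from the octonionic product and conjugation, both $\G_2'$-invariant; the dot product is the polarization of $Q$, also invariant). Consequently, if $(x,y,z)$ is a null triple and $g \in \G_2'$, then $(gx, gy, gz)$ is again a null triple: pairwise orthogonality and nullity are preserved, and $(gx \times gy)\cdot gz = g\big((x\times y)\big)\cdot gz = (x\times y)\cdot z = \frac12$. So $\G_2'$ does act on the set of null triples. To show the action is transitive, I would invoke the standard fact that $\G_2'$ acts transitively on null triples because it acts transitively on suitable flags; but more in the spirit of this paper, I would argue directly: given null triples $(x,y,z)$ and $(x',y',z')$, Theorem \ref{thm:nulltriple} furnishes for each a basis of $\I$, namely $x,y,z,x\times y, y\times z, z\times x, 2(x\times y)\times z$ and the primed analogue, on which the dot and cross products have \emph{the same} structure constants. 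Hence the unique linear map $g \maps \I \to \I$ sending the first basis to the second intertwines the cross products, and also (since the structure constants agree) the dot products; in particular it preserves $Q$ on $\I$. Such a $g$ extends to an automorphism of $\O'$: indeed any linear isomorphism of $\I$ preserving the cross product extends to an algebra automorphism of $\O'$ by setting $g(1) = 1$ and using $xy = x\times y - x\cdot y$ together with the fact that $x\cdot y$ is recovered from the cross product via the formula $x\cdot y = -(x\times y)\cdot(\text{anything})$—more carefully, since $g$ preserves the cross product it preserves the associator-type identities, and the dot product is determined on $\I$ by $Q(x) = x\cdot x$ together with polarization, which $g$ respects because it matches structure constants. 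Thus $g \in \G_2'$ and $(gx,gy,gz) = (x',y',z')$.

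For uniqueness, suppose $g, h \in \G_2'$ both carry $(x,y,z)$ to $(x',y',z')$. Then $g^{-1}h$ fixes $(x,y,z)$ pointwise. Since $g^{-1}h$ preserves the cross product, it fixes every iterated cross product of $x,y,z$; but by Theorem \ref{thm:nulltriple} these iterated cross products include a full basis of $\I$ (explicitly $x\times y$, $y\times z$, $z\times x$, and $2(x\times y)\times z$, together with $x,y,z$ themselves). Hence $g^{-1}h$ is the identity on $\I$, and since $\I$ generates $\O'$ as an algebra (it contains, e.g., the split-quaternion copy and more), $g^{-1}h$ is the identity on $\O'$, so $g = h$. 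This gives freeness and completes the torsor claim.

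The main obstacle I anticipate is the step asserting that a linear isomorphism of $\I$ matching the cross-product structure constants of two null triples genuinely extends to an automorphism of the \emph{whole} algebra $\O'$—i.e. that preserving the cross product on $\I$ forces preservation of the dot product, hence of the full octonionic product via $xy = x\times y - x\cdot y$, and that $g(1)=1$ makes this multiplicative on all of $\O' = \R\cdot 1 \oplus \I$. This is where one must be careful: a priori the cross product alone might not determine the dot product. The resolution is that Theorem \ref{thm:nulltriple} pins down \emph{both} products completely in terms of the null-triple basis with universal structure constants, so the map matching cross products automatically matches dot products too; then multiplicativity on $\O'$ is a routine check using the decomposition of the octonionic product into symmetric and antisymmetric parts. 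I would state this carefully but not belabor the verification, citing the structure-constant tables established in the proof of Theorem \ref{thm:nulltriple}.
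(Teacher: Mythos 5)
Your proposal is correct and follows essentially the same approach as the paper: both prove uniqueness by noting that a null triple generates $\I$ via cross products, and both prove existence by invoking Theorem \ref{thm:nulltriple} to show the linear map sending one null-triple basis to the other preserves the dot and cross products on $\I$, hence lies in $\G_2'$. The only difference is that you spell out explicitly why a linear map on $\I$ preserving both products extends (by fixing $1$) to an algebra automorphism of $\O'$ via $xy = x\times y - x\cdot y$; the paper takes this step for granted, so your added care is a welcome clarification rather than a departure.
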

\begin{proof}
Because the action of $\G_2'$ preserves the dot and cross products, it
takes null triples to null triples. Moreover, the action of $g \in
\G_2'$ on $\I$ is determined by its action on a null triple, since a
null triple generates $\I$. Thus, there is at most one element of
$\G_2'$ taking $(x,y,z)$ to $(x',y',z')$. To see there is at least one
such element, consider the linear map:
\[ g \maps \I \to \I \]	
which maps the basis obtained from $(x,y,z)$:
\[ x, \quad y, \quad z, \quad x \times y, \quad y \times z, \quad z \times x, \quad 2(x \times y) \times z \]
to that obtained from $(x',y',z')$:
\[ x', \quad y', \quad z', \quad x' \times y', \quad y' \times z', \quad z' \times x', \quad 2(x' \times y') \times z' . \]
By Theorem \ref{thm:nulltriple}, this linear isomorphism preserves
the dot and cross product. Thus $g \in \G_2'$, as desired.
\end{proof}

\begin{prop}
\label{prop:pairtotriple}
Given any pair of null vectors $x, y \in \I$ such that $\langle x \rangle$
and $\langle y \rangle$ are two rolls away, there is a null vector $z \in
\I$ such that $(x,y,z)$ is a null triple.
\end{prop}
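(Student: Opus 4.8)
The plan is to build $z$ by a short piece of linear algebra in the $7$-dimensional quadratic space $\I$, organized around the null vectors $x$, $y$, and $x \times y$.

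First I would unpack the hypothesis. By Theorem~\ref{thm:incidence_real}, $\langle x\rangle$ and $\langle y\rangle$ being exactly two rolls away means $x\cdot y = 0$ while $x\times y \neq 0$; in particular $x$ and $y$ are linearly independent, and since they are orthogonal they anticommute, so $x\times y = xy$. Using that $x$ and $y$ are null (hence $x^2 = y^2 = 0$) together with the alternative, flexible, and Moufang laws --- exactly the toolkit used in the proof of Theorem~\ref{thm:nulltriple} --- I would establish three facts: $(x\times y)\cdot(x\times y) = 0$, i.e.\ $x\times y$ is null; $(x\times y)\cdot x = (x\times y)\cdot y = 0$; and $x\times y \notin \langle x,y\rangle$. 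The last point is where the hypothesis $x\times y\neq 0$ is really used: if $xy = \alpha x + \beta y$, left-multiplying by $x$ annihilates the $x$-term (since $x^2=0$ and $x(xy)=x^2y=0$) and forces $\beta = 0$, after which right-multiplying by $y$ forces $\alpha = 0$, contradicting $xy\neq 0$. It follows that $N := \langle x, y, x\times y\rangle$ is a $3$-dimensional totally isotropic subspace of $\I$, which is as large as such a subspace can be, since the form on $\I$ has Witt index $3$.

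Next I would descend to a smaller space. Put $W = \langle x,y\rangle^\perp$; since $\langle x,y\rangle$ is totally isotropic it sits inside $W$, and nondegeneracy of the form gives $W^\perp = \langle x,y\rangle$, so the radical of $W$ is precisely $\langle x,y\rangle$. Hence $V := W/\langle x,y\rangle$ carries a nondegenerate induced form and is $3$-dimensional. The image $\bar e$ of $x\times y$ in $V$ is nonzero (by the third fact) and isotropic (by the first), so $V$ is indefinite and therefore has Witt index $1$; consequently there is an isotropic $\bar f\in V$ with $\bar e\cdot\bar f = \frac{1}{2}$, obtained by choosing a hyperbolic partner of $\bar e$ and rescaling. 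I would then lift $\bar f$ to an arbitrary $f\in W$ and set $z = f$. By construction $z\neq 0$; $z$ is orthogonal to $x$ and $y$ because $z\in W$; $z\cdot z = \bar f\cdot\bar f = 0$, so $z$ is null; and $(x\times y)\cdot z = \bar e\cdot\bar f = \frac{1}{2}$, the last equality being legitimate precisely because $x\times y\in W$ is orthogonal to the radical $\langle x,y\rangle$. Thus $(x,y,z)$ is a null triple.

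The only step with real content is verifying that $x\times y$ is a nonzero null vector, orthogonal to $x$ and $y$ and independent of them; everything afterward is routine hyperbolic-plane bookkeeping in $V$. I expect the independence claim $x\times y\notin\langle x,y\rangle$ to be the delicate point, since it is what distinguishes ``exactly two rolls away'' from ``at most two rolls away'' --- without it $N$ could collapse to dimension $2$, $V$ could be anisotropic, and no valid $z$ would exist (indeed none does when $x\times y = 0$).
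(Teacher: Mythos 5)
Your proof is correct, and it solves exactly the same problem in exactly the same spirit --- after observing that $x$, $y$, $x\times y$ span a maximal totally isotropic $3$-space, one hunts for a hyperbolic partner of $x\times y$ orthogonal to both $x$ and $y$ --- but the linear-algebra mechanism is genuinely different from the paper's. The paper picks an abstract maximal isotropic subspace $W$ complementary to $V = \Ann_{x\times y} = \langle x, x\times y, y\rangle$, intersects the kernels of $w \mapsto w\cdot x$ and $w \mapsto w\cdot y$ inside $W$, and then uses nondegeneracy of the hyperbolic pairing $V\times W\to\R$ to see that the resulting line pairs nontrivially with $x\times y$. You instead work inside $W := \langle x,y\rangle^\perp$, identify its radical as $\langle x,y\rangle$, quotient to get a $3$-dimensional nondegenerate space $V$, and find a hyperbolic partner of the (nonzero, isotropic) image of $x\times y$ there, then lift. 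The trade-offs: the paper's version leans on the existence of a complementary maximal isotropic subspace (a standard but nontrivial fact about split quadratic spaces that the paper does not stop to justify), whereas your quotient-by-the-radical move needs no such existence claim and makes the ``routine hyperbolic-plane bookkeeping'' more transparent; on the other hand, the paper's version is more closely tied to the annihilator machinery it has already set up, and produces $z$ in a genuinely null complementary subspace, which is structurally nice. You also spell out from scratch why $x\times y$ is null, orthogonal to $x$ and $y$, and lies outside $\langle x,y\rangle$ --- the paper instead cites this as $\Ann_{x\times y} = \langle x, x\times y, y\rangle$, a fact established en route to Theorem~\ref{thm:nulltriple}. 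Both routes are valid; yours is marginally more self-contained.
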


\begin{proof}
Recall that the vectors $x$, $y$ and $x \times y$ span a maximal null subspace:
\[ V = \Ann_{x \times y} = \langle x, x \times y, y \rangle . \]
Pick any maximal null subspace $W$ complementary to $V$.  The quadratic
form $Q$ must be nondegenerate when restricted to the direct sum
$V \oplus W \subset \I$.  Thus the map taking the dot product with $x$:
\[ \begin{array}{rcl} 
		W & \to & \R \\
		w & \mapsto & w \cdot x \\
\end{array}
\]
must have a two-dimensional kernel. Similarly, the map taking the dot product with $y$:
\[ \begin{array}{rcl} 
		W & \to & \R \\
		w & \mapsto & w \cdot y \\
\end{array}
\]
must also have a two-dimensional kernel. These kernels must be distinct, or
else $x$ and $y$ are proportional, contradicting their linear independence.
Thus, the subspace of $W$ orthogonal to both $x$ and $y$, the intersection of
these two-dimensional kernels in the three-dimensional $W$, is one-dimensional.
Let $z \in W$ span this intersection. By choice, $z$ is othogonal to $x$
and $y$, so it must have nonzero dot product with $x \times y$, because
otherwise the pairing on $V \oplus W$ would be degenerate. Thus:
\[ (x \times y) \cdot z \neq 0 . \]
By rescaling $z$ if necessary, we obtain:
\[ (x \times y) \cdot z = \frac{1}{2} . \qedhere \]
\end{proof}

We can use the preceding proposition to create a null triple starting from
any pair of null vectors, not just those whose projectivizations are two
rolls away.

\begin{prop} We have:
\label{prop:extensiontonulltriple}
\begin{enumerate}
	\setcounter{enumi}{-1}
	\item Any null vector $x \in \I$ is the first vector of some null
		triple $(x, y, z)$.
	\item Given any pair of null vectors $w, x \in \I$ such that
		$\langle w \rangle$ and $\langle x \rangle$ are one roll away,
		there is a null triple $(x,y,z)$ such that $w = x \times y$.
	\item Given any pair of null vectors $x, y \in \I$ such that
		$\langle x \rangle$ and $\langle y \rangle$ are two rolls away,
		there is a null vector $z \in \I$ such that $(x,y,z)$ is a
		null triple.
	\item Given any pair of null vectors $w, x \in \I$ such that
		$\langle w \rangle$ and $\langle x \rangle$ are three rolls
		away, there is a null triple $(x, y, z)$ such that $\langle w
		\rangle = \langle y \times z \rangle$.
\end{enumerate}
\end{prop}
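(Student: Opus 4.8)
The plan is to reduce each of the four parts to Proposition \ref{prop:pairtotriple}, which is part 2 verbatim. The one genuinely new ingredient I expect to need—and the step I expect to be the crux—is a lemma that I would prove first: $\O'$ has no three-dimensional null subalgebra. Given such a subalgebra $V$, vanishing of all products in $V$ gives $V \subseteq \Ann_v$ for every nonzero $v \in V$, and since both spaces are three-dimensional (Proposition \ref{lem:annihilator}) we get $V = \Ann_v$. Normalizing $v = (u, uq)$ with $u \in \Im(\H)$ a unit and $q \in \SU(2)$, the explicit description of the annihilator from the proof of Proposition \ref{lem:annihilator} reads $V = \{(c, -ucuq) : c \in \Im(\H)\}$; choosing $e_1, e_2$ so that $u, e_1, e_2$ is an orthonormal basis of $\Im(\H)$, a one-line quaternion computation shows the product of $(e_1, -ue_1uq)$ and $(e_2, -ue_2uq)$ has first component $e_1e_2 + ue_2e_1u = \pm 2u \ne 0$, contradicting $V$ being a null subalgebra. (Equivalently: the $\G_2$ incidence geometry has no triangle.) Everything else is bookkeeping with annihilators and standard alternative-algebra identities; the nondegeneracy facts used below in parts 1 and 3 both come down to this lemma.

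For part 0: by Proposition \ref{lem:annihilator}, $\Ann_x$ is a maximal null subspace of $\I$; choose a complementary maximal null subspace $W$ as in the proof of Proposition \ref{prop:pairtotriple}, so $Q$ is nondegenerate on $\Ann_x \oplus W$. Since $x$ lies in the null subspace $\Ann_x$ it is orthogonal to all of $\Ann_x$, hence pairs nontrivially with $W$, so $K := W \cap x^\perp$ is two-dimensional and $K \cap \Ann_x \subseteq W \cap \Ann_x = 0$. Any nonzero $y \in K$ is a null vector orthogonal to $x$ with $y \notin \Ann_x$, so $xy \ne 0$; by Theorem \ref{thm:incidence_real}, $\langle x \rangle$ and $\langle y \rangle$ are two rolls away, and Proposition \ref{prop:pairtotriple} supplies $z$ with $(x,y,z)$ a null triple.

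For part 1, ``one roll away'' means $xw = wx = 0$ with $x,w$ linearly independent, so $x,w \in \Ann_w$ and $\langle x,w\rangle$ is a two-dimensional null subalgebra. I would show that left multiplication by $x$ maps $\Ann_w$ onto $\langle w \rangle$. For $v \in \Ann_w$ one has $x(xv) = x^2 v = 0$, and, using antisymmetry of the associator, $(xv)w = [x,v,w] = [v,w,x] = -v(wx) = 0$, so $xv \in \Ann_x \cap \Ann_w$. This intersection contains $\langle x, w \rangle$ and equals it, because $\Ann_x = \Ann_w$ would (by the lemma, exactly as in its proof) make $\Ann_w$ a three-dimensional null subalgebra. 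Hence $v \mapsto xv$ has kernel $\langle x,w\rangle$ and one-dimensional image inside $\langle x,w \rangle$; the image has no $x$-component, since $xv_0 = \alpha x + \beta w$ gives $0 = (xv_0)v_0 = \alpha\, xv_0$ and so $\alpha = 0$. So some $y \in \Ann_w$ has $xy = w$; then $x \cdot y = 0$ (both lie in the null subspace $\Ann_w$), $x \times y = xy = w$, and $\langle x\rangle, \langle y\rangle$ are two rolls away (since $xy \ne 0$), so Proposition \ref{prop:pairtotriple} completes $(x,y,z)$ to a null triple, now with $x \times y = w$.

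For part 3, ``three rolls away'' forces $x \cdot w \ne 0$ by Theorem \ref{thm:incidence_real}. Again $\Ann_w$ is a three-dimensional null subspace, and since $w \in \Ann_w$ pairs nontrivially with $x$, the subspace $K := \Ann_w \cap x^\perp$ is two-dimensional and $w \notin K$. The trilinear form $(a,b,c) \mapsto a \cdot (b \times c)$ restricts to an alternating form on $K$, and it is nonzero there: if it vanished on $K$ then $ab = a \times b \in \Ann_w \cap x^\perp = K$ for all $a,b \in K$, so $K$ would be a two-dimensional subalgebra, necessarily associative and anticommutative with all elements null, hence with vanishing product, making $\langle K, w \rangle = \Ann_w$ a three-dimensional null subalgebra. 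So I can pick a basis $y,z$ of $K$ with $(x \times y) \cdot z \ne 0$ and rescale $z$ so that $(x \times y)\cdot z = \tfrac{1}{2}$. Then $x,y,z$ are nonzero null vectors, pairwise orthogonal ($y \cdot z = 0$ as $K$ is null, $x\cdot y = x\cdot z = 0$ as $K \subseteq x^\perp$) with $(x\times y)\cdot z = \tfrac12$, i.e.\ a null triple. Finally $y,z \in \Ann_w$ gives $w \in \Ann_y \cap \Ann_z$, which by Theorem \ref{thm:nulltriple} equals $\langle y \times z \rangle$, so $\langle w \rangle = \langle y \times z \rangle$.
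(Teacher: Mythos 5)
Your proof is correct, and it follows a genuinely different route from the paper's for the harder parts, so a comparison is worth making. The paper's Part~0 simply ``chooses $y$ two rolls away from $x$'' without constructing it; your hyperbolic-plane argument makes the existence explicit. For Part~1 the paper picks $\langle y\rangle$ anywhere in $\P\Ann_w$ off the line $\langle w,x\rangle$ and then argues via the \emph{uniqueness of the midpoint} (Proposition \ref{prop:rolls}) that $\langle w\rangle$ must equal $\langle x\times y\rangle$, whereas you solve $xy=w$ directly by analyzing the rank and image of the left-multiplication map $\Ann_w\to\Ann_x\cap\Ann_w$; both work, but your version shows concretely \emph{which} $y$ works rather than inferring it from uniqueness. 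For Part~3 the paper takes $y,z$ to be an orthogonal pair spanning $\Ann_w\cap x^\perp$ and then separately proves $(x\times y)\cdot z\neq 0$ using nondegeneracy of $Q$ on $\Ann_w\oplus\Ann_x$; you instead observe that the alternating bilinear form $\beta(b,c)=(x\times b)\cdot c$ on the $2$-dimensional $K=\Ann_w\cap x^\perp$ is nonzero, and extract $y,z$ from any non-isotropic pair. The most valuable addition in your write-up is isolating and actually proving the lemma that there is no $3$-dimensional null subalgebra; the paper invokes this fact (``the maximal dimension of a null subalgebra is two'') several times without proof, and your explicit quaternion computation $e_1e_2 + u e_2 e_1 u = \pm 2u \neq 0$ plugs that gap. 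Two small phrasing issues: in Part~3 your ``trilinear form $(a,b,c)\mapsto a\cdot(b\times c)$ restricts to an alternating form on $K$'' should say the \emph{bilinear} form on $K$ obtained by fixing the first slot to be $x$ (a trilinear alternating form on a $2$-dimensional space is automatically zero); and in Part~1 the justification ``by the lemma, exactly as in its proof'' is not quite what is needed---the cleaner statement is that if $\Ann_x=\Ann_w$, then for any $v\in\Ann_x$ complementing $\langle x,w\rangle$ all pairwise products among $x,w,v$ vanish, producing a $3$-dimensional null subalgebra and contradicting the lemma directly. Neither of these affects the validity of the argument.
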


\begin{proof}
We apply Proposition \ref{prop:pairtotriple} repeatedly. To prove part
0, choose any $y$ two rolls away from $x$. By Proposition
\ref{prop:pairtotriple}, there exists a $z$ such that $(x,y,z)$ is a
null triple. For part 1, choose $\langle y \rangle$ one roll away from $\langle
w \rangle$ not lying on the line joining $\langle w \rangle$ and $\langle x
\rangle$. By choice, $\langle x \rangle$ and $\langle y \rangle$ are two rolls
away, and Proposition \ref{prop:rolls} tells us that $\langle w \rangle$ is the
unique point at most one roll away from $\langle x \rangle$ and 
$\langle y \rangle$.  On the other hand, $\langle x \times y \rangle$ is 
also at most one roll from both $\langle x \rangle$ and $\langle y \rangle$. 
To see this, note from Theorem \ref{thm:incidence_real} that the $x$ and $y$ 
are orthogonal, and thus anticommute. Hence $x \times y = xy = -yx$, and 
a quick calculation shows that $x \times y$ annihilates both $x$ and $y$ 
because $x$ and $y$ are null.  By another application of Theorem 
\ref{thm:incidence_real}, we conclude $\langle x \times y \rangle$ is at most one roll 
from both $\langle x \rangle$ and $\langle y \rangle$.  From the uniqueness of $\langle w \rangle$, it 
follows that $\langle w \rangle = \langle x \times y \rangle$.  Rescaling 
$y$ if necessary, we have that $w = x \times y$.  Since $x$ and $y$ are two 
rolls apart, Proposition \ref{prop:pairtotriple} gives the result.  

Part 2 is just a restatement of Proposition \ref{prop:pairtotriple}.
Finally, for part 3, note that Theorem \ref{thm:incidence_real} implies $w
\cdot x \neq 0$.  Hence the linear map
\[ \begin{array}{ccc} 
	\Ann_w & \to & \R \\
	u & \mapsto & u \cdot x \\
\end{array}
\]
has rank one, and a two-dimensional kernel. Let $y$ and $z$ be
orthogonal vectors spanning this kernel. As in the proof of part 1,
$\langle y \rangle$ and $\langle z \rangle$ are each one roll away
from $\langle w \rangle$, but two rolls away from each other: if they
were one roll apart, $yz = 0$, and $\langle w, y, z \rangle$ would be
a three-dimensional null subalgebra. Since the maximal dimension of a
null subalgebra is two, we must have $yz \neq 0$. It now follows from
the argument in part 1 that $\langle w \rangle = \langle y \times z
\rangle$. Further, $(x,y,z)$ are pairwise orthogonal by construction.
We claim that $(x \times y) \cdot z \neq 0$, and so rescaling $z$ if
necessary, $(x,y,z)$ is a null triple.

To check this last claim, note that $\Ann_w = \langle w, y, z
\rangle$.  Moreover, $\Ann_x$ and $\Ann_w$ are complementary null
subspaces, both of maximal dimension: any nonzero vector in their
intersection would give a point that is one roll away from both
$\langle w \rangle$ and $\langle x \rangle$, contradicting our
assumption that these points are three rolls away.  The inner product
restricts to a nondegenerate inner product on the direct sum $\Ann_w
\oplus \Ann_x$.  In particular, since $x \times y \in \Ann_x$ is
orthogonal to itself and all vectors in $\Ann_x$, it must have a
nonvanishing inner product with some vector in $\Ann_w$, or else the
inner product would be degenerate. But $x \times y$ is also orthogonal
to $w$ and $y$, thanks to Theorem \ref{thm:incidence_real}: $\langle x
\times y \rangle$ is one roll away from $\langle y \rangle$, which is
one roll away from $\langle w \rangle$, so $\langle x \times y
\rangle$ and $\langle w \rangle$ are at most two rolls away. Thus, we
must have $(x \times y) \cdot z \neq 0$, as desired.
\end{proof}

We can use null triples to decompose $\PC \times \PC$ into its orbits under
$\G_2'$. There are precisely four:
\begin{thm}
\label{thm:orbits}
Under the action of $\G_2'$, the space of pairs of configurations, $\PC \times
\PC$, decomposes into the following orbits:
\begin{enumerate}
\setcounter{enumi}{-1}
\item $X_0 \subset \PC \times \PC$, the space of pairs zero
rolls away from each other. This is the diagonal set:
\[ X_0 = \{ (\langle x \rangle, \langle x \rangle ) \in \PC \times \PC \} .
 \]
\item $X_1 \subset \PC \times \PC$, the space of pairs one roll
away from each other:
\[ X_1 = \{ (\langle x \rangle, \langle y \rangle ) \in \PC \times \PC : 
\; \langle x \rangle \neq \langle y \rangle, \quad x \times y = 0 \} . \]
\item $X_2 \subset \PC \times \PC$, the space of pairs two
rolls away from each other:
\[ X_2 = \{ (\langle x \rangle, \langle y \rangle ) \in \PC \times \PC : 
\; x \times y \neq 0, \quad x \cdot y = 0 \} . \]
\item $X_3 \subset \PC \times \PC$, the space of pairs three
rolls away from each other:
\[ X_3 = \{ (\langle x \rangle, \langle y \rangle ) \in \PC \times \PC : 
\; x \cdot y \neq 0 \} . \]
\end{enumerate}
\end{thm}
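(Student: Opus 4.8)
The plan is to establish two things: that $X_0,X_1,X_2,X_3$ partition $\P C\times\P C$, and that $\G_2'$ acts transitively on each of them. For the partition, the key observation is that $x\times y=0$ already forces $x\cdot y=0$: since the split octonions are alternative and $x$ is null, $(x\cdot y)\,x=-\frac{1}{2}(xy+yx)x=-x^2y=(x\cdot x)y=0$, and $x\ne 0$ then gives $x\cdot y=0$; moreover $\langle x\rangle=\langle y\rangle$ clearly forces both $x\times y=0$ and $x\cdot y=0$. Hence the four defining conditions --- $\langle x\rangle=\langle y\rangle$; $\langle x\rangle\ne\langle y\rangle$ with $x\times y=0$; $x\times y\ne0$ with $x\cdot y=0$; and $x\cdot y\ne0$ --- are mutually exclusive and exhaustive, and by Theorem~\ref{thm:incidence_real} they single out exactly the pairs that are $0,1,2,3$ rolls away, respectively. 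Since $\G_2'$ preserves the dot and cross products (hence $Q$, so it maps $\P C$ to itself), each $X_i$ is $\G_2'$-invariant and is therefore a union of orbits; it remains to show each is a single orbit. Each $X_i$ is nonempty: the diagonal lies in $X_0$, and if $(x,y,z)$ is any null triple then $(\langle x\rangle,\langle x\times y\rangle)\in X_1$, $(\langle x\rangle,\langle y\rangle)\in X_2$, and $(\langle x\rangle,\langle y\times z\rangle)\in X_3$.

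Transitivity on each piece I would deduce from the torsor theorem, Theorem~\ref{thm:torsor}, together with Proposition~\ref{prop:extensiontonulltriple}. The uniform mechanism is: starting from a pair in $X_i$, build a null triple in which the two given points occupy a \emph{fixed} pair of vertices of the hexagon of Theorem~\ref{thm:nulltriple}; then the unique $g\in\G_2'$ matching the two resulting null triples automatically carries one pair to the other, because $g$ preserves $\times$ and hence sends each vertex of one hexagon to the corresponding vertex of the other. \emph{Case $X_0$:} by part~0 of Proposition~\ref{prop:extensiontonulltriple}, write each point as $\langle x\rangle$ with $x$ the first vector of a null triple $(x,y,z)$; matching triples matches the points. \emph{Case $X_1$:} given $(\langle x\rangle,\langle y\rangle)$ one roll away, part~1 produces a null triple whose first vector represents $\langle x\rangle$ and whose product $x\times y$ represents $\langle y\rangle$ --- adjacent outer vertices, hence one roll apart. \emph{Case $X_2$:} part~2 extends a pair two rolls away directly to a null triple $(x,y,z)$ whose first two entries are the two given points, which are exactly two rolls apart by Theorem~\ref{thm:nulltriple} (the normalization $(x\times y)\cdot z=\frac12$ forces $x\times y\ne0$). \emph{Case $X_3$:} given $(\langle x\rangle,\langle w\rangle)$ three rolls away, part~3 produces a null triple $(x,y,z)$ with $x$ representing the first point and $\langle y\times z\rangle=\langle w\rangle$; since $x$ and $y\times z$ are opposite vertices of the hexagon they have dot product $\frac12\ne0$ and so are three rolls apart. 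In every case Theorem~\ref{thm:torsor} supplies the required $g$, completing the proof that $X_i$ is a single orbit.

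I do not expect a genuine obstacle here: the substantive content has already been absorbed into Theorems~\ref{thm:nulltriple} and~\ref{thm:torsor} and Proposition~\ref{prop:extensiontonulltriple}, and what remains is bookkeeping about which slots of the hexagon realize which roll-distance. The one place requiring mild care is that Proposition~\ref{prop:extensiontonulltriple} sometimes pins a vector down only projectively (for instance $\langle w\rangle=\langle y\times z\rangle$ in part~3, rather than an equality of vectors), but since $\G_2'$ acts on \emph{points} of $\P C$ this is harmless. As a byproduct, the $X_0$ argument shows that $\G_2'$ already acts transitively on $\P C$ itself.
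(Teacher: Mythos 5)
Your proof is correct and follows essentially the same route as the paper: both use Proposition~\ref{prop:extensiontonulltriple} to extend each pair to a null triple with the two points sitting at appropriate hexagon vertices, and then invoke Theorem~\ref{thm:torsor} to produce the matching element of $\G_2'$. Your preliminary paragraph spelling out that the four conditions genuinely partition $\P C \times \P C$ (via $x\times y=0 \Rightarrow x\cdot y=0$) and that each $X_i$ is $\G_2'$-invariant is a useful bit of bookkeeping that the paper leaves implicit, but the substance is identical.
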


\begin{proof}
In essence, we combine Theorem \ref{thm:torsor} with Proposition
\ref{prop:extensiontonulltriple}. 

To prove part 0, let $x$ and $x'$ be two nonzero null vectors in $C$. We claim
there is an element $g \in \G_2'$ such that $x' = gx$. Indeed, by Proposition
\ref{prop:extensiontonulltriple}, there are null triples $(x,y,z)$ and
$(x',y',z')$, so let $g$ be the element of $\G_2'$ guaranteed by Theorem
\ref{thm:torsor} taking the first null triple to the second. It follows that
$\G_2'$ acts transitively on nonzero null vectors, on $\PC$, and thus on the
diagonal subset $X_0$ of $\PC \times \PC$.

For part 1, let $\langle w \rangle$ and $\langle x \rangle$ be points
of $\PC$ that are one roll away.  By Proposition
\ref{prop:extensiontonulltriple}, there is a null triple $(x,y,z)$
such that $w = x \times y$. If $\langle w' \rangle$ and $\langle x'
\rangle$ are another pair of points that are one roll away, let
$(x',y',z')$ be a null triple such that $w' = x' \times y'$. Now let
$g \in \G_2'$ carry one null triple to the other. We then have $x' =
gx$ and
\[ w' = x' \times y' = gx \times gy = g(x \times y) = gw . \]
It follows that $\G_2'$ acts transitively on $X_1$. 

For part 2, let $\langle x \rangle$ and $\langle y \rangle$ be two
rolls away.  By Proposition \ref{prop:extensiontonulltriple}, there is
a null triple $(x,y,z)$. If $\langle x' \rangle$ and $\langle y'
\rangle$ is another pair of points that are two rolls away, there is a
null triple $(x',y',z')$. Letting $g \in \G_2'$ take one null triple to
the other, we immediately conclude that $\G_2'$ is transitive on $X_2$.

Finally, for part 3, let $\langle w \rangle$ and $\langle x \rangle$
be three rolls away. By Proposition \ref{prop:extensiontonulltriple},
there is a null triple $(x,y,z)$ such that $\langle w \rangle =
\langle y \times z \rangle$. If $\langle w' \rangle$ and $\langle x'
\rangle$ is another pair of points three rolls away, and $(x',y',z')$
a null triple such that $\langle w' \rangle = \langle y' \times z'
\rangle$, let $g \in \G_2'$ take one null triple to the other. Then $x'
= gx$, and
\[ \langle w' \rangle = \langle gy \times gz \rangle 
= \langle g(y \times z) \rangle = \langle gw \rangle . \]
It follows that $\G_2'$ acts transitively on $X_3$. 
\end{proof}

\section{Geometric quantization}
\label{quantization}

Recall that $C$ is the lightcone in the imaginary split octonions:
\[     C = \{ x \in \I : \; Q(x) = 0 \} .\]
Let $\PC$ be the corresponding real projective variety in 
$\Proj\I$:
\[    \PC = \{x \in C : x \ne 0\}/\R^*  .\]
This is called a `real projective quadric'.  We write $\langle x
\rangle$ for the 1-dimensional subspace of $\I$ containing the nonzero
vector $x \in \I$.  Then each point of $\PC$ can be written as
$\langle x \rangle$ for some nonzero $x \in C$.

The projectivized lightcone comes equipped with a real line bundle $L
\to \PC$ whose fiber over the point $\langle x \rangle$ consists of
linear functionals on $\langle x \rangle$:
\[     L_{\langle x \rangle} = \{ f \maps \langle x \rangle \to \R : \; 
f \textrm{\; is linear} \}. \]
In other words, $L$ is the restriction to $\PC$ of the dual of the
canonical line bundle on the projective space $\Proj\I$.

We would like to recover $\I$ from this line bundle over the
projectivized lightcone via some process of `geometric quantization'.
However, this process is best understood for holomorphic line bundles
over K\"ahler manifolds \cite{Hurt,Woodhouse}.  So, we start by 
complexifying everything.

If we complexify the split octonions we obtain an algebra $\C \otimes \O'$
over the complex numbers which is canonically isomorphic to the
complexification of the octonions, $\C \otimes \O$.  This latter
algebra is called the `bioctonions'.  So, we call the complexification
of the split octonions the {\bf bioctonions}, and denote it simply by
$\O^\C$.  The quadratic form $Q$ on $\O'$ extends to a complex-valued
quadratic form on $\O^\C$, which we also denote as $Q$.  This
quadratic form makes $\O^\C$ into a composition algebra.  We can
polarize $Q$ to obtain the \define{dot product} on $\O^\C$, the unique
symmetric bilinear form for which
\[   x \cdot x = Q(x)   \]
for all $x \in \O^\C$.

Complexifying the subspace $\I \subset \O'$ gives a 
7-dimensional complex subspace of $\O^\C$.  We denote 
this as $\I^\C$ and call its elements \define{imaginary bioctonions}.
We define
\[    C^\C = \{ x \in \I^\C : \; Q(x) = 0 \}. \]
This is what algebraic geometers would call a `complex quadric'.
We define $\PC^{\C}$ to be the corresponding projective variety in
the complex projective space $\Proj\I^\C$:
\[    \PC^\C = \{x \in C^\C : x \ne 0\}/\C^*  .\]
This is a `complex projective quadric'.  

If we now change notation slightly and write $\langle x \rangle$ for
the 1-dimensional \emph{complex} subspace of $\I^\C$ containing the
nonzero vector $x \in \I^\C$, then each point of $\PC^\C$ is of the form
$\langle x \rangle$ for some nonzero $x \in C^\C$.  The complex
projective quadric $\PC^\C$ comes equipped with a holomorphic complex
line bundle $L^\C \to \PC^\C$ whose fiber at $\langle x \rangle$
consists of all complex linear functionals on $\langle x \rangle$:
\[     L_{\langle x \rangle} = \{ f \maps \langle x \rangle \to \C : \; 
f \textrm{\; is linear} \} .\]

Since the real projective quadric $\PC$ is included in the complex
projective quadric $\PC^\C$, and similarly the total space of
the line bundle $L$ is included in the total space of the complex
line bundle $L^\C$, we have a commutative diagram:
\[\xymatrix{
 L \ar[d] \ar[r] & L^\C \ar[d] \\
 \PC \ar[r] & \PC^\C 
}\]
Complex conjugation gives rise to a conjugate-linear map from
$\I^\C = \C \otimes \I$ to itself, whose set of fixed points is
just $\I$.  This in turn gives an antiholomorphic map from $\PC^\C$
to itself whose fixed points are just $\PC$.  This lifts to an 
antiholomorphic map from $L^\C$ to itself whose fixed points are just
$L$.  So, we actually have a commutative diagram 
\[\xymatrix{
 L \ar[d] \ar[r] & L^\C \ar[d] \ar@(ur,dr)[] \\
 \PC \ar[r] & \PC^\C  \ar@(ur,dr)[]
}\]

Now we shall show that every imaginary bioctonion gives a holomorphic
section of $L^\C$, and that every holomorphic section of $L^\C$
arises this way.  Even better, every imaginary split octonion gives a
section of $L$ that extends to a holomorphic section of $L^\C$, 
and every section of $L$ with this property arises that way.

First, note that every imaginary bioctonion $w$ gives a
section $s_w$ of $L^\C$ as follows.  Evaluated at point $\langle x
\rangle$ of $\PC^\C$, $s_w$ must be some linear functional on the span of
$x \in C$.  We define this linear functional so that it maps $x$ to
$w \cdot x$:
\[      s_w(x) = w \cdot x  .\]
It is easy to check that the section $s_w$ is holomorphic.  Moreover,
every holomorphic section of $L^\C$ arises this way:

\begin{thm} 
\label{thm:complex_sections}
A section of $L^\C$ is holomorphic if and only if it is of the form
$s_w$ for some imaginary bioctonion $w$, which is then unique.  Thus,
the space $\I^\C$ of imaginary bioctonions is isomorphic to the space
of holomorphic sections of $L^\C$ over $\PC^\C$.  
\end{thm}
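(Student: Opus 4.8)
The plan is to reduce the statement to a standard fact about sections of the hyperplane bundle on a projective quadric. Recall that $L^\C$ is the restriction to $\P C^\C$ of the hyperplane bundle $\mathcal{O}(1)$ on $\P\I^\C$ (the dual of the tautological line bundle), whose global sections are exactly the linear functionals on $\I^\C$. Using the dot product, which is nondegenerate on $\I^\C$ because $Q$ is, I would identify $\I^\C$ with its dual: an imaginary bioctonion $w$ corresponds to the linear functional $\ell_w = w\cdot(-)$, and restricting $\ell_w$ from $\P\I^\C$ to $\P C^\C$ gives exactly $s_w$. In particular $s_w$ is holomorphic, which handles the easy direction. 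So the theorem reduces to showing that the restriction map from linear functionals on $\I^\C$ to holomorphic sections of $L^\C$ over $\P C^\C$ is a bijection.

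Injectivity (hence uniqueness of $w$) would be immediate: if $\ell_w$ restricts to the zero section then $\ell_w$ vanishes on the affine cone $C^\C$; since $Q$ is a nondegenerate quadratic form in $7 \ge 3$ variables it is an irreducible polynomial, so $C^\C$ is an irreducible hypersurface with ideal $(Q)$, and a linear form lying in this ideal must be $0$. Equivalently, the null vectors span $\I^\C$, so $w\cdot x = 0$ for all null $x$ forces $w = 0$.

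The content of the theorem is surjectivity, and this is the step I expect to be the main obstacle. The cleanest route is cohomological. The ideal sheaf of the quadric $\P C^\C \subset \P\I^\C \cong \P^6$ is $\mathcal{O}(-2)$, the inclusion into $\mathcal{O}$ being multiplication by the defining form $Q$, so twisting by $\mathcal{O}(1)$ yields the exact sequence
\[ 0 \longrightarrow \mathcal{O}(-1) \stackrel{Q}{\longrightarrow} \mathcal{O}(1) \longrightarrow L^\C \longrightarrow 0 . \]
The associated long exact sequence in cohomology contains
\[ H^0(\mathcal{O}(-1)) \longrightarrow H^0(\mathcal{O}(1)) \longrightarrow H^0(\P C^\C, L^\C) \longrightarrow H^1(\mathcal{O}(-1)) , \]
where the unlabeled groups are taken on $\P^6$, and the standard computation of line-bundle cohomology on projective space gives $H^0(\P^6,\mathcal{O}(-1)) = 0$ and $H^1(\P^6,\mathcal{O}(-1)) = 0$. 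Hence $H^0(\mathcal{O}(1)) \to H^0(\P C^\C, L^\C)$ is an isomorphism, which combined with the identification $\I^\C \cong H^0(\mathcal{O}(1))$ proves the theorem.

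If one prefers to stay in the differential-geometric language of the paper, surjectivity can instead be obtained concretely: I would pull a holomorphic section $s$ of $L^\C$ back along $\pi \maps C^\C\setminus\{0\}\to\P C^\C$, using the tautological trivialization of $\pi^*L^\C$ (the fiber of $L^\C$ at $\langle x\rangle$ is $(\C x)^*$, which evaluation at $x$ identifies with $\C$), to get a holomorphic function $\tilde s$ on $C^\C\setminus\{0\}$ homogeneous of degree $1$; since $C^\C$ is a hypersurface in $\I^\C\cong\C^7$ singular only at $0$, it is normal, so $\tilde s$ extends holomorphically over $0$ by the Riemann extension theorem and stays homogeneous of degree $1$ by continuity. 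The genuinely delicate point, equivalent to the input used above, is then that a degree-$1$ homogeneous holomorphic function on the cone $C^\C$ is the restriction of a linear functional, i.e.\ that the degree-$1$ graded piece of the homogeneous coordinate ring $\C[\I^\C]/(Q)$ already exhausts such functions; this is the projective normality of the quadric. Either way, one concludes that $w\mapsto s_w$ is the asserted isomorphism $\I^\C \cong H^0(\P C^\C, L^\C)$.
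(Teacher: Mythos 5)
Your proof is correct, and it takes a genuinely different route from the paper. The paper cites the Bott--Borel--Weil theorem: it identifies $\P C^\C$ with the flag variety $\G_2^\C/P$, where $P$ is the parabolic stabilizing a null line, and invokes the fact that the holomorphic sections of the equivariant line bundle $L^\C$ form an irreducible representation of $\G_2^\C$, which is then matched with $\I^\C$. Your argument instead stays entirely within elementary coherent sheaf cohomology: you recognise $L^\C$ as $\mathcal{O}(1)\big|_{\P C^\C}$, twist the ideal-sheaf sequence $0\to\mathcal{O}(-2)\to\mathcal{O}\to\mathcal{O}_{\P C^\C}\to 0$ by $\mathcal{O}(1)$, and kill the outer terms using $H^0(\P^6,\mathcal{O}(-1))=H^1(\P^6,\mathcal{O}(-1))=0$. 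Both are valid, and they buy different things. The paper's proof is very short if you already know BBW and the weight-theoretic identification of $L^\C$, and it makes the $\G_2^\C$-equivariance of the isomorphism manifest, which is what the paper actually uses downstream (e.g.\ in Theorem \ref{thm:cross_product}). Your proof requires no Lie theory and works verbatim for any smooth quadric in $\P^n$ with $n\ge 3$, so it exhibits the result as a special case of projective normality of quadrics rather than as a fact about homogeneous spaces; on the other hand, $\G_2^\C$-equivariance of the resulting isomorphism, while still obvious (restriction commutes with the linear action), is not foregrounded. Your injectivity argument (a linear form in the ideal $(Q)$ must vanish, or equivalently null vectors span $\I^\C$) is the same observation the paper makes in one line, just spelled out; and your aside about recovering surjectivity from normality of the affine cone plus Riemann extension is a correct alternative to the long exact sequence, equivalent to the same projective-normality input.
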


\begin{proof}  
This is a direct consequence of the Bott--Borel--Weil Theorem
\cite{Bott,Serre}.  Any finite-dimensional irreducible complex
representation of a complex semisimple Lie group $G$ arises as the
space of holomorphic sections of a holomorphic line bundle over $G/P$
for some parabolic subgroup $P$.  In particular, the irreducible
representation of $\G^\C_2$ on $\I^\C$ arises as the space of holomorphic
sections of $L^\C \to \PC^\C$, where $\PC^\C \cong \G^\C_2/P$ with $P$
being the subgroup that fixes a 1d null subspace in $\I^\C$.  This
implies that sections of the form $s_w$ are all the holomorphic
sections of $L^\C$.  Clearly different choices of $w$ give different
sections $s_w$.
\end{proof}

We can think of $\I$ as a real subspace of $\I^\C$, and
then each $w \in \I$ gives a section $s_w$ of $L^\C$ using the
same construction.  However, since $w \cdot x$ is real
when $w,x \in \I$, restricting this section to $\PC \subset \PC^\C$
actually gives a section of the real line bundle $L$.  Moreover:

\begin{thm} 
\label{thm:real_sections}
A section of $L \to \PC$ extends to a global holomorphic section of
$L^\C \to \PC^\C$ if and only if it is of the form $s_w$ for some
imaginary split octonion $w$, which is then unique.  Thus, the space
$\I$ of imaginary split octonions is isomorphic to the space of
sections of $L$ over $\PC$ that extend to holomorphic sections of $L^\C$
over all of $\PC^\C$.
\end{thm}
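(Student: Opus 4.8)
The plan is to reduce the whole statement to Theorem \ref{thm:complex_sections}, which already pins down the holomorphic sections of $L^\C \to \P C^\C$ as exactly the sections $s_w$ for $w \in \I^\C$ (with $w$ unique), and then to add one elementary reality check. The easy direction is immediate: if a section of $L$ has the form $s_w$ for some $w \in \I$, then viewing $w$ inside $\I^\C$ exhibits $s_w$ as a global holomorphic section of $L^\C$ extending it. For the converse, I would take a section $s$ of $L$ admitting a holomorphic extension $\tilde s$ to $L^\C$; by Theorem \ref{thm:complex_sections}, $\tilde s = s_w$ for a unique imaginary bioctonion $w$, so $s = s_w|_{\P C}$, and the remaining task is to show that $w$ must actually lie in the real subspace $\I \subset \I^\C$.

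To do this I would unwind what it means for $s_w|_{\P C}$ to be a section of the \emph{real} bundle $L$. At a point $\langle x \rangle \in \P C$ with $x \in C \subset \I$, the value $s_w(\langle x \rangle)$ is the functional $x \mapsto w \cdot x$, and this lies in the real fiber $L_{\langle x \rangle}$ precisely when $w \cdot x \in \R$. So the hypothesis that $s_w|_{\P C}$ is a section of $L$ says exactly that $w \cdot x \in \R$ for every null vector $x \in C$. Writing $w = a + i b$ with $a, b \in \I$, and using that $a \cdot x$ and $b \cdot x$ are real whenever $x \in \I$, bilinearity gives $w \cdot x = a \cdot x + i\,(b \cdot x)$, so the condition forces $b \cdot x = 0$ for all $x \in C$.

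The crux is then the observation that the real null cone $C$ spans $\I$ over $\R$. This is a standard feature of indefinite quadratic forms: since $Q|_\I$ is nondegenerate of signature $(3,4)$, the space $\I$ splits as an orthogonal sum of hyperbolic planes (together with one extra line), and each hyperbolic plane is spanned by its two null vectors. Even more concretely, in the model $\I = \Im(\H) \oplus \H$ with $Q(a,b) = |a|^2 - |b|^2$, the vectors $(a,b)$ with $|a| = |b| = 1$ are null and obviously span. Granting this, $b$ is orthogonal to all of $\I$, hence $b = 0$ by nondegeneracy of the dot product, so $w = a \in \I$, as desired. Uniqueness of $w$ drops out of the same spanning fact: if $s_w|_{\P C} = s_{w'}|_{\P C}$ with $w, w' \in \I$, then $(w - w') \cdot x = 0$ for all $x \in C$, whence $w = w'$.

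I expect no serious obstacle here: the only step that merits care is the claim that $C$ spans $\I$, and that is immediate from the explicit model just mentioned, so the rest is routine bookkeeping. As a side remark, the holomorphic extension $\tilde s$ of $s$ is itself unique, since $\P C$ is a maximal totally real submanifold of $\P C^\C$ --- real dimension $5$ on each side --- hence a set of uniqueness for holomorphic sections; but the argument above does not rely on this.
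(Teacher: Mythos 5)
Your proposal is correct and follows essentially the same route as the paper: reduce to Theorem \ref{thm:complex_sections}, observe that restriction to $\P C$ lands in $L$ exactly when $w \cdot x$ is real for all real null $x$, and deduce $w \in \I$. You supply the explicit supporting fact (that $C$ spans $\I$ over $\R$, so $b \cdot x = 0$ for all $x \in C$ forces $b = 0$) which the paper simply asserts as ``this is true if and only if $w \in \I$''; the rest, including the uniqueness argument, coincides.
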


\begin{proof} 
Suppose we have a section of $L$ that extends to a global holomorphic
section of $L^\C$.  By Theorem \ref{thm:complex_sections}, this 
holomorphic section of $L^\C$ is of the form $s_w$ for some imaginary
bioctonion $w$.  Its restriction to $\PC \subset \PC^\C$ will lie
in $L$ if and only if $w \cdot x$ is real for all $x \in C$.
This is true if and only if $w \in \I$.  Clearly different choices
of $w$ give different sections $s_w$.
\end{proof}

\section{The cross product from quantization}
\label{cross product}

It would be nice if we could use geometric quantization to recover not
only the vector space of imaginary octonions, but also the octonions
together with their algebra structure.  Here we do this for the
bioctonions.  We already know from Theorem \ref{thm:complex_sections}
that geometric quantization of the space $\PC^\C$ gives a vector
space isomorphic to the imaginary bioctonions.  Now we will use
geometric quantization to equip this space with an operation that
matches the \define{cross product} of imaginary bioctonions:
\[        x \times y = \frac{1}{2} \left(x y - y x\right). \]
This, we claim, is enough to recover the bioctonions as an algebra.

To see this, note that we can write $\O^\C = \C \oplus \I^\C$
where $\C$ consists of complex multiples of the identity $1 \in \O^\C$.
To describe the multiplication of bioctonions it is thus enough to say
what happens when we multiply two imaginary bioctonions.  But the product 
of two imaginary bioctonions obeys
\[         x y = x \times y - x \cdot y  \]
where $x \times y$ is an imaginary bioctonion and $x \cdot y$ is a
multiple of the identity.  Here the \define{dot product} arises from
polarizing the quadratic form on the bioctonions:
\[       x \cdot x = Q(x) , \]
but on imaginary bioctonions it is also proportional to the anticommutator:
\[        x \cdot y = - \frac{1}{2} \left( x y + y x \right) .\]
All this is easy to check by explicit computation.

Thus, to describe the bioctonions as an algebra it is enough to
describe the cross product and dot product of imaginary bioctonions.
Explicitly, multiplication in $\O^\C = \C \oplus \I^\C$ is given by
\[     (\alpha, a) (\beta, b) 
= (\alpha \beta - a \cdot b, \, \alpha b + \beta a + a \times b) . \]
But in fact, the dot product can be recovered from the cross product:
\[     a \cdot b = - \frac{1}{6} \tr(a \times (b \times \cdot)) \]
where the right-hand side refers to the trace of the map
\[      a \times (b \times \cdot) \maps \I^\C \to \I^\C  .\]
It is clear that some such formula should be true, since $\I^\C$ is an
irreducible representation of $\G_2^\C$, the complex form of $\G_2$,
so any two invariant bilinear forms are proportional.  The constant
factor can thus be checked by computing the trace of the operator $(a
\times (a \times \cdot))$ for a single imaginary bioctonion $a$;
briefly, we get $-6 a \cdot a$ because there is 6-dimensional subspace
orthogonal to $a$, on which this operator acts as multiplication by
$-a \cdot a$.

In short, the whole algebra structure of the bioctonions can be
recovered from the cross product of imaginary bioctonions.  We
can even define $\G_2^\C$ to be the group of linear transformations 
of the imaginary bioctonions that preserve the cross product.

Thus it is interesting to see if we can construct the cross product using
geometric quantization.  In fact we can.  The procedure uses a `correspondence'
between the complex manifolds $\PC^\C$ and $\PC^\C \times \PC^\C$, which is
a diagram like this:
\[\xy
	(0,18)*{}="A";
	(-16,0)*{}="B";
	(16,0)*{}="C";
	{\ar_{p} (-2,15);(-14,3)};
	{\ar^{i} (2,15);(14,3)};
	"A"*{S};
	"B"*{\PC^\C};  
	"C"*{\PC^\C \times \PC^\C};
\endxy
\]
where the maps $p$ and $i$ exhibit $S$ as a complex submanifold
embedded in $(\PC^\C)^3$.  But in the case we shall consider, $i$ by itself is
already an embedding.

We can use $p$ to pull the line bundle $L^\C$ from $\PC^\C$ back to
$S$.  Then, since $i$ is an embedding, we can push the resulting line
bundle forward to the submanifold $i(S) \subset \PC^\C \times \PC^\C$.  But
there is another line bundle on $\PC^\C \times \PC^\C$: the \define{external
tensor product} of the dual canonical bundle with itself, $L^\C \boxtimes
L^\C$, whose fiber over any point $(a,b) \in \PC^\C \times \PC^\C$ is $L_a
\tensor L_b$.  The bundle $L^\C \boxtimes L^\C$ restricts to a bundle over
$i(S)$.  This is potentially different than the bundle obtained by pulling back
$L^\C$ along $p$ and then pushing it forwards along $i$.  In Proposition
\ref{prop:iso}, however, we show these line bundles on $i(S)$ can be
identified.

Recall that imaginary bioctonions can be identified with sections of
$L^\C$.  By the construction described so far, we can take any such
section, pull it back to $S$, push it forward to $i(S)$, and then
think of it as a section of $L^\C \boxtimes L^\C$ restricted to
$i(S)$.  In Proposition \ref{prop:unique_extension} we show this
section extends to all of $\PC^\C \times \PC^\C$.  The result can be
identified with an element in the tensor square of the space of
imaginary bioctonions.  All in all, this procedure gives rise to a
linear map
\[          \Delta \maps \I^\C \to \I^\C \otimes \I^\C . \]
This is a kind of `comultiplication' of imaginary bioctonions.
However, the space of imaginary bioctonions can be identified with its
dual using the dot product.  This gives a linear map
\[          \Delta^* \maps  \I^\C \otimes \I^\C \to \I^\C  .\]
and in Theorem \ref{thm:cross_product} we show that this is the cross
product, at least up to a nonzero constant factor.

The most interesting fact about this whole procedure is that the
correspondence
\[\xy
	(0,18)*{}="A";
	(-16,0)*{}="B";
	(16,0)*{}="C";
	{\ar_{p} (-2,15);(-14,3)};
	{\ar^{i} (2,15);(14,3)};
	"A"*{S};
	"B"*{\PC^\C};  
	"C"*{\PC^\C \times \PC^\C};
\endxy
\]
can be defined \emph{using solely the incidence geometry of} the
projective lightcone $\PC^\C$: in other words, using only points
and lines in this space, and the relation of a point lying on a line.

To do this, we start by extending the concept of line from the $\PC$
to its complexification $\PC^\C$.  Following Theorem
\ref{thm:lines_and_2d_null_subalgebras}, we define a \define{line} in
$\PC^\C$ to be the projectivization of a 2d null subalgebra of the
bioctonions.  This is also the concept of line implicit in the Dynkin
diagram of $\G_2$: in the theory of buildings, given any simple Lie
group, each dot in its Dynkin diagram corresponds to a type of figure
in a geometry having that group as symmetries \cite{Brown}.  The
details for $\G_2$ are nicely discussed by Agricola \cite{Agricola}.

Given this concept of line, we can describe the correspondence of
complex manifolds that yields a geometric description of the
bioctonion cross product.  We begin by defining the manifold $S$.  

First, recall from Definition \ref{defn:rolls} and Proposition
\ref{prop:rolls} that two points $a, b \in \PC^\C$ are `one roll away' if
$a \ne b$ but there is some line containing both $a$ and $b$.  We
define $S$ to be the subset of $(\PC^\C)^3$ consisting of triples
$(a,b,c)$ for which $b$ is the only point that is one roll away from
both $a$ and $c$.  In this situation $a$ and $c$ are `two rolls away',
and we call $b$ the \define{midpoint} of $a$ and $c$.  We shall soon
see that if $a = \langle x \rangle$ and $c = \langle z \rangle$ are
two rolls away, their midpoint is $b = \langle x \times z \rangle$.
So, the cross product is hidden in the incidence geometry, and we can
use geometric quantization to extract it.

Next we define $p$ and $i$.  The map $p$ picks out the midpoint:
\[
\begin{array}{cccl}
p \maps & S & \to & \PC^\C \\
        & (a,b,c) & \mapsto & b .
\end{array}
\]
The map $i$ picks out the other two points:
\[
\begin{array}{cccl}
i \maps & S & \to & (\PC^\C)^2 \\
        & (a,b,c) & \mapsto & (a,c).
\end{array}
\]

Next we show that $S$ is a complex manifold and $i$ is an embedding.  We
also show that $p$ makes $S$ into the total space of a fiber bundle over
$\PC^\C$, though we will not need this fact.  To get started, we must
relate the geometry of $\PC^\C$ to operations on the space of
imaginary split octonions:

\begin{thm} 
\label{thm:incidence_complex}
Suppose that $\langle x \rangle, \langle y \rangle
\in \PC^\C$.  Then:

\begin{enumerate}
\item $\langle x \rangle$ and $\langle y \rangle$ are at most one roll
away if and only if $x y = 0$, or equivalently, $x \times y = 0 $.
\item $\langle x \rangle$ and $\langle y \rangle$ are at most two rolls
away if and only if $x \cdot y$ = 0.
\item $\langle x \rangle$ and $\langle y \rangle$ are always at most
three rolls away.
\end{enumerate}
\end{thm}

\begin{proof}
The proof here is exactly like that of Theorem \ref{thm:incidence_real}, 
so we omit it.  In particular, like the split octonions, the bioctonions 
are an alternative algebra \cite{Schafer}.
\end{proof}

We define the \define{annihilator} of a nonzero element $x \in C^\C$ to
be this subspace of $\I^\C$:
\[       \Ann_x = \{ y \in \I^\C : \; xy = 0 \}  .\]

\begin{prop} 
\label{one_roll_or_less}
Given a point $\langle x \rangle \in \PC^\C$, the set of points that
are at most one roll away from $\langle x \rangle$ is the projectivization 
of $\Ann_x$.
\end{prop}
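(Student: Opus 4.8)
The statement asks us to identify, for a fixed $\langle x \rangle \in \P C^\C$, the set of points at most one roll away with $\P(\Ann_x)$. My approach is to chase through the definitions: a point $\langle y \rangle$ is at most one roll away from $\langle x \rangle$ precisely when either $\langle y\rangle = \langle x\rangle$, or there is a line containing both. A line is the projectivization of a 2d null subalgebra, so $\langle y\rangle$ lies on a line through $\langle x\rangle$ exactly when $x$ and $y$ span a 2d null subalgebra, which happens iff $xy = 0$ and $yx = 0$. As in the proof of Theorem \ref{thm:incidence_real}, $yx = \overline{y}\,\overline{x} = \overline{xy}$ for imaginary bioctonions, so a single equation $xy = 0$ suffices. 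Hence $\langle y \rangle$ is one roll away from $\langle x\rangle$ iff $y \in \Ann_x$ and $y$ is not a multiple of $x$. Adding back the degenerate case $\langle y\rangle = \langle x\rangle$, and noting $x \in \Ann_x$ since $x^2 = -Q(x) = 0$, we get exactly $\P(\Ann_x)$.

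First I would recall from Theorem \ref{thm:incidence_complex} (part 1) that $\langle x\rangle$ and $\langle y \rangle$ are at most one roll away iff $xy = 0$. This is really the whole content, but I would spell out why it matches $\P(\Ann_x)$: every nonzero $y \in \Ann_x$ is automatically null, since $\Ann_x$ is a null subspace (this is part 1 of the annihilator proposition, whose proof goes through verbatim for the bioctonions because they too are alternative), so $\langle y \rangle$ is a genuine point of $\P C^\C$; conversely if $\langle y \rangle \in \P C^\C$ satisfies $xy = 0$ then by definition $y \in \Ann_x$. Thus the point set $\{\langle y \rangle \in \P C^\C : \langle y \rangle \text{ at most one roll from } \langle x\rangle\}$ coincides with $\{\langle y\rangle : y \in \Ann_x, y \neq 0\} = \P(\Ann_x)$.

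I expect no serious obstacle here: the proposition is essentially a repackaging of Theorem \ref{thm:incidence_complex}(1) together with the observation that $\Ann_x$ consists of null vectors. The one point requiring a word of care is that "at most one roll away" includes the case $\langle y \rangle = \langle x \rangle$, and one must check $x \in \Ann_x$ so that $\langle x\rangle$ itself is in $\P(\Ann_x)$ — but this is immediate since $x$ is null and imaginary, so $x^2 = -\overline{x}x = -Q(x) = 0$. I would state the proof in two or three sentences, citing Theorem \ref{thm:incidence_complex} and the complex analogue of Proposition \ref{lem:annihilator}, and leave it at that.
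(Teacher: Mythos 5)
Your proof is correct and takes essentially the same approach as the paper: both reduce the claim directly to Theorem~\ref{thm:incidence_complex}(1), which identifies ``at most one roll away'' with $xy = 0$, i.e.\ $y \in \Ann_x$. Your version is a bit more careful than the paper's one-line proof in that you verify $\Ann_x$ consists of null vectors (so $\P(\Ann_x)$ really is a subset of $\P C^\C$) and that $x \in \Ann_x$ handles the degenerate zero-roll case; both points are worth spelling out as you did.
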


\begin{proof}
Theorem \ref{thm:incidence_complex} says that $\langle y \rangle \in
\PC^\C$ is at most one roll away from $\langle x \rangle$ if and only
if $xy = 0$.
\end{proof}

\begin{prop}
\label{prop:annihilator}
Suppose $y \in C^\C$ is nonzero.  Then 
$\Ann_y$ is a 3-dimensional null subspace of $\I^\C$, and any two
elements of $\Ann_y$ anticommute.
\end{prop}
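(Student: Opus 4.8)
The plan is to deduce this proposition almost entirely from the already-established real case, Proposition \ref{lem:annihilator}, by observing that its proof used only two facts about the ground algebra: that it is alternative, and that an imaginary element with nonzero square is ruled out, i.e. that the quadratic form behaves as expected on the relevant subspace. Since the bioctonions $\O^\C$ form a composition algebra and are alternative \cite{Schafer}, and Theorem \ref{thm:incidence_complex} already records the complex analogue of the incidence facts, essentially the same argument goes through verbatim over $\C$. So the bulk of the write-up is really just a pointer to the earlier proof, with the small checks done again to confirm nothing degenerates over $\C$.

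Concretely, I would proceed in three steps. First, show $\Ann_y$ is a null subspace and that any two of its elements anticommute, simultaneously, exactly as in the proof of Proposition \ref{lem:annihilator}: for $w, w' \in \Ann_y$, the dot product $w \cdot w'$ is $-\tfrac12(ww' + w'w)$ on $\I^\C$, and we test whether this complex number vanishes by multiplying by the nonzero vector $y$; then
\[ -2(w \cdot w')y = (ww')y + (w'w)y = w(w'y) + w'(wy) + [w,w',y] + [w',w,y] = 0 , \]
where the first two terms vanish because $w, w' \in \Ann_y$, and the two associators cancel by the antisymmetry of the associator in an alternative algebra. Hence $w \cdot w' = 0$ and $ww' + w'w = 0$. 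Second, compute the dimension of $\Ann_y$ by an explicit coordinate argument: writing $y = (u,q) \in \Im(\H)^\C \oplus \H^\C$, and rescaling so that $u\overline u = q\overline q = 1$ (possible since $y$ is null, so $|u|^2 = |q|^2$ as complex numbers, and we may normalize — here one should note these are nonzero since $y \neq 0$ and $y$ null forces both components nonzero), the equation $yz = 0$ for $z = (c,d)$ becomes $d = -ucq$, exactly as in the real case, giving a linear isomorphism $\Im(\H)^\C \to \Ann_y$, $c \mapsto (c, -ucq)$. Third, assemble: $\Ann_y$ is therefore $3$-dimensional, null, and anticommutative.

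The main subtlety — and the one place where I would be most careful — is the normalization step in the dimension count: over $\C$ one cannot simply "rescale to unit length", and one must instead verify that $y$ null and nonzero forces \emph{both} components $u$ and $q$ to be nonzero (so that neither $u\overline u$ nor $q \overline q$ vanishes), and then rescale by a complex scalar so the computation of $\Ann_y$ reduces to the clean form $d = -ucq$. If a component did vanish, $y$ null would force the other to vanish too, contradicting $y \neq 0$; so this is fine, but it deserves an explicit sentence rather than a silent appeal to the real argument. Everything else is a faithful transcription of the proof of Proposition \ref{lem:annihilator}, using Theorem \ref{thm:incidence_complex} in place of Theorem \ref{thm:incidence_real} and the alternativity of the bioctonions in place of that of the split octonions, so I would keep the exposition brief and simply flag these two substitutions.
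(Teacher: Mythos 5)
Your step 1 (nullity and anticommutativity via the associator identity) is fine and works verbatim over $\C$, since it uses only alternativity. But your step 2 — the dimension count — contains a genuine error precisely at the place you flagged as the ``main subtlety.'' You claim that if one component of $y=(u,q)$ vanished, the null condition would force the other to vanish as well, contradicting $y\neq 0$. That inference uses positive-definiteness of the quaternion norm, which fails after complexification: over $\C$ the quadratic form on $\H^\C$ has nonzero null vectors. For instance, taking $q = 1 + i\,i$ (first $i$ the complex scalar, second the quaternion unit), one has $q\overline q = 1 + i^2 = 0$ with $q \neq 0$, so $y=(0,q)$ is a nonzero null imaginary bioctonion with $u=0$. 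More generally, even with both components nonzero one can have $u\overline u = q\overline q = 0$, so the normalization to $u\overline u = q\overline q = 1$ is simply not available in general, and your explicit isomorphism $\Im(\H)^\C \to \Ann_y$, $c \mapsto (c,-ucq)$, does not describe $\Ann_y$ in these cases.

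The paper's proof sidesteps exactly this issue by a different route: it first observes (as you do) that nullity plus anticommutativity follows from the dot product being proportional to the anticommutator, and then, instead of an explicit coordinate computation, invokes the transitivity of $\G_2^\C$ on $\P C^\C$ to reduce to a single well-chosen $y$. Choosing $y$ to lie in the real subspace $\I \subset \I^\C$, one sees that $\Ann_y$ is the complexification of the real annihilator $\{x \in \I : yx = 0\}$, which Proposition~\ref{lem:annihilator} already established is a $3$-dimensional null real subspace, so $\Ann_y$ is $3$-dimensional and null over $\C$. You could repair your argument by restricting the coordinate computation to the generic stratum where $u\overline u \neq 0$ and then appealing to the same transitivity to conclude for all $y$, but at that point the transitivity is doing all the work and the explicit coordinates are superfluous — better to adopt the paper's approach directly.
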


\begin{proof}
Consider two nonzero elements $x,z \in \Ann_y$. They anticommute if they
have vanishing dot product, since their anticommutator $xz + zx$ is
proportional to their dot product.  

So, we need only show that $\Ann_y$ is null and 3-dimensional.  Since
$\G_2^\C$ acts transitively on $\PC^\C$, it suffices to prove this
for a single chosen $y \in C^\C$.  We do the special case where $y$
actually lies in $\I \subset \I^\C$.  In this case, we know from
Lemma \ref{lem:annihilator} that $\{x \in \I \colon yx = 0\}$ is a
3-dimensional null real subspace of $\I$.  Since $\Ann_y$ is the
complexification of this space, it is a 3-dimensional null complex
subspace of $\I^\C$.
\end{proof}

Now we are ready to study the set $S$:

\begin{prop}
\label{prop:algebraic}
The set $S$ is given by
\[  S = \{(\langle x \rangle , \langle y \rangle , \langle z \rangle)  \in (\PC^\C)^3 \colon \; xy = 0 = yz , \; xz \ne 0 \} .\]
\end{prop}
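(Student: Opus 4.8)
The plan is to unpack the definition of $S$ given in the text --- triples $(a,b,c)$ for which $b$ is the unique point one roll away from both $a$ and $c$ --- and translate each condition into octonionic algebra using Theorem \ref{thm:incidence_complex} and Proposition \ref{prop:rolls}. Writing $a = \langle x \rangle$, $b = \langle y \rangle$, $c = \langle z \rangle$, the conditions ``$b$ is one roll away from $a$'' and ``$b$ is one roll away from $c$'' become $xy = 0$ and $yz = 0$ (with $\langle x \rangle \neq \langle y \rangle$ and $\langle y \rangle \neq \langle z \rangle$) by part 1 of Theorem \ref{thm:incidence_complex}. The remaining content of the definition of $S$ is the \emph{uniqueness} of such a $b$, which by part 2 of Proposition \ref{prop:rolls} is exactly the statement that $a$ and $c$ are two rolls away. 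By parts 1 and 2 of Theorem \ref{thm:incidence_complex}, being exactly two rolls away --- i.e. at most two but not at most one --- is equivalent to $x \cdot z = 0$ together with $xz \neq 0$.

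So the argument splits into two inclusions. For the inclusion ``$\subseteq$'': given $(\langle x \rangle, \langle y \rangle, \langle z \rangle) \in S$, the existence of the midpoint $b = \langle y \rangle$ gives $xy = 0 = yz$ directly. For $xz \neq 0$: if we had $xz = 0$, then by Theorem \ref{thm:incidence_complex} $\langle x \rangle$ and $\langle z \rangle$ would be at most one roll away, hence joined by a line; but then every point on that line would be one roll away from both $\langle x \rangle$ and $\langle z \rangle$ (using the fact that a line through two points makes each of them one roll from all others on it, as in Proposition \ref{prop:rolls}), contradicting the uniqueness of the midpoint. Hence $xz \neq 0$. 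For the reverse inclusion ``$\supseteq$'': suppose $xy = 0 = yz$ and $xz \neq 0$. Then $\langle y \rangle$ is one roll away from both $\langle x \rangle$ and $\langle z \rangle$ (noting $\langle x \rangle \neq \langle y \rangle$ and $\langle y \rangle \neq \langle z \rangle$, since otherwise $xz = x^2$ or $z^2$, which vanish as $x,z$ are null, contradicting $xz \neq 0$). And $\langle x \rangle$, $\langle z \rangle$ are not zero or one roll apart: not zero because $xz \neq 0$ forces $x, z$ non-proportional; not one because $xz \neq 0$ means no line joins them. So by Proposition \ref{prop:rolls} they are exactly two rolls away and $\langle y \rangle$ is their unique midpoint, i.e. the triple lies in $S$.

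I expect the only delicate point to be making the uniqueness-versus-line argument airtight: specifically, verifying that whenever $\langle x \rangle$ and $\langle z \rangle$ lie on a common line $\ell$, there are infinitely many points of $\ell$ that are one roll from \emph{both} of them --- which is what kills uniqueness of the midpoint. This is essentially the content already used in the converse direction of part 2 of Proposition \ref{prop:rolls}, so I would simply cite that proposition rather than reprove it. Everything else is a mechanical translation through Theorem \ref{thm:incidence_complex}, whose validity over $\O^\C$ is asserted in the excerpt. One small bookkeeping subtlety worth stating explicitly: in the displayed formula for $S$ the conditions $\langle x \rangle \neq \langle y \rangle$ and $\langle y \rangle \neq \langle z \rangle$ are implied by $xy = 0 = yz$ and $xz \neq 0$ (a null vector squares to zero, so equality of any two of the one-dimensional spaces would force a vanishing product among $x,y,z$ that propagates to $xz = 0$), which is why they need not appear separately --- I would remark on this so the reader is not puzzled by their absence.
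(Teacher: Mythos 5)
Your proposal is correct and takes essentially the same approach as the paper: translate the defining conditions of $S$ through Theorem~\ref{thm:incidence_complex} and reduce the uniqueness of the midpoint to Proposition~\ref{prop:rolls}, part~2. The paper's own proof is terser --- it does not explicitly spell out, as you do, why $xz \neq 0$ automatically forces $\langle x \rangle$, $\langle y \rangle$, $\langle z \rangle$ to be pairwise distinct so that ``at most one roll away'' upgrades to ``exactly one roll away'' --- but the underlying argument is the same.
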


\begin{proof}  
By Theorem \ref{thm:incidence_complex},
the conditions say that $\langle x \rangle$ is one roll away from
$\langle y \rangle$ and $\langle y \rangle$ is one roll away from
$\langle z \rangle$ but $\langle x \rangle$ is not one roll away
from $\langle z \rangle$.  This is a way of saying that $\langle x \rangle$ is 
two rolls from $\langle z \rangle$, with $\langle y \rangle$ as their 
midpoint, which is the condition for this triple of points to be in
$S$.
\end{proof}

It is also useful to express the set $S$ in terms of the cross
product, as promised above:

\begin{prop}
\label{prop:midpointcrossproduct}
Let $(\langle x \rangle, \langle y \rangle , \langle z \rangle)$ be a
point of $S$. Then $\langle y \rangle = \langle x \times z \rangle$.
\end{prop}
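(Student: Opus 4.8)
The strategy is to use the characterization of $S$ from Proposition \ref{prop:algebraic} together with the algebraic properties of the annihilator. Given $(\langle x \rangle, \langle y \rangle, \langle z \rangle) \in S$, we know $xy = 0 = yz$ and $xz \ne 0$. So $x, z \in \Ann_y$, and we want to show $\langle x \times z \rangle = \langle y \rangle$. The plan is to first verify that $\langle x \times z \rangle$ makes sense as a point of $\P C^\C$, then show it is one roll away from both $\langle x \rangle$ and $\langle z \rangle$, and finally invoke the uniqueness of the midpoint.

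First I would observe that $x$ and $z$ are orthogonal: by Proposition \ref{prop:annihilator}, any two elements of $\Ann_y$ anticommute, hence have vanishing dot product, so $x \cdot z = 0$ and $x \times z = xz$. Since $xz \ne 0$, this gives a nonzero vector. Next I would check $\langle xz \rangle \in \P C^\C$, i.e.\ that $xz$ is null: using alternativity (the bioctonions are alternative, as noted after Theorem \ref{thm:incidence_complex}), $(xz)\overline{(xz)} = (xz)(\bar z \bar x) = (xz)(zx) = x(z^2)x$ up to signs from conjugating imaginary elements; since $z$ is null, $z^2 = -Q(z) = 0$, so $xz$ is null. (Alternatively one can cite that $x \times z$ lies in the null subspace $\Ann_y$ by the argument below, which also gives nullity.) Then I would show $\langle xz \rangle$ is at most one roll away from $\langle x \rangle$: by Theorem \ref{thm:incidence_complex} it suffices that $x(xz) = 0$, and alternativity gives $x(xz) = x^2 z = -(x \cdot x)z = 0$ since $x$ is null. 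Symmetrically, $(xz)z = x z^2 = 0$, so $\langle xz \rangle$ is also at most one roll from $\langle z \rangle$. Since $xz \ne 0$, neither of these points equals $\langle xz \rangle$ trivially, but in fact I should confirm $\langle xz \rangle \ne \langle x \rangle$ and $\langle xz \rangle \ne \langle z \rangle$: if $xz$ were proportional to $x$, then $x$ and $z$ would lie in a common $2$d null subalgebra (spanned by $x$ and $xz \propto x$... here one needs a short argument, or one notes that $\langle x \rangle$ and $\langle z \rangle$ being two rolls away already forbids them from being one roll away, so any point one roll from both must be distinct from each; and if $\langle xz\rangle = \langle x \rangle$ then $\langle x \rangle$ is one roll from $\langle z \rangle$, contradiction).

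Having established that $\langle xz \rangle = \langle x \times z \rangle$ is one roll away from both $\langle x \rangle$ and $\langle z \rangle$, the conclusion follows from the definition of $S$: the point $\langle y \rangle$ is, by hypothesis, the \emph{unique} point one roll away from both $\langle x \rangle$ and $\langle z \rangle$. Hence $\langle x \times z \rangle = \langle y \rangle$.

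The main obstacle is bookkeeping with the conjugation and the alternative-algebra manipulations — making sure signs from $\bar x = -x$ on imaginary bioctonions are handled correctly, and that every regrouping of a triple product like $x(xz)$ or $(xz)z$ is legitimate (which it is, since these involve only the two elements $x, z$ and alternativity says the subalgebra they generate is associative). None of this is deep, but it needs care. A secondary point to handle cleanly is the non-degeneracy check that $\langle x \times z \rangle$ is genuinely a new point distinct from $\langle x \rangle$ and $\langle z \rangle$, which as noted above follows immediately from the fact that $\langle x \rangle$ and $\langle z \rangle$ are two rolls away and hence not one roll away.
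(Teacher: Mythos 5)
Your proposal is correct and matches the paper's argument in all essentials: both use Proposition~\ref{prop:algebraic} to get $xy=0=yz$ and $xz\ne0$, observe that $x(xz)=x^2z=0$ and $(xz)z=xz^2=0$ from nullity of $x$ and $z$, invoke uniqueness of the midpoint to conclude $\langle y\rangle=\langle xz\rangle$, and finally use Proposition~\ref{prop:annihilator} to get that $x,z\in\Ann_y$ anticommute so that $xz=x\times z$. The only difference is ordering (you establish $xz=x\times z$ up front rather than after the uniqueness step) and that you make explicit a couple of sanity checks---nullity of $xz$ and its distinctness from $\langle x\rangle,\langle z\rangle$---which the paper leaves implicit.
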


\begin{proof}  
By Proposition \ref{prop:algebraic}, we know that $xy = 0 = yz$ and
$xz \ne 0$.  Noting that $y = xz$ is a solution to the first two
equations because $x$ and $z$ are null, we must have $\langle y \rangle
= \langle xz \rangle$ because the midpoint is unique. So, it suffices
to check that $xz = x \times z$. Since the cross product is half the
commutator, this happens precisely when $x$ and $z$ anticommute. By
Proposition \ref{prop:annihilator}, $x$ and $z$ indeed anticommute,
because they both lie in the annihilator of $y$.
\end{proof}

\begin{prop}
\label{prop:altalg}
The set $S$ is given by
\[  S = \{(\langle x \rangle , \langle x \times z \rangle , \langle z \rangle)  \in (\PC^\C)^3 \colon \; x \cdot z = 0, \quad x \times z \neq 0 \} . \]
\end{prop}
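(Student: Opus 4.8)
The plan is to massage the description of $S$ given by Proposition \ref{prop:algebraic} into the stated form using Propositions \ref{prop:midpointcrossproduct}, \ref{prop:annihilator}, and the incidence results of Theorem \ref{thm:incidence_complex}. First I would unpack what we already know: by Proposition \ref{prop:algebraic}, a triple $(\langle x \rangle, \langle y \rangle, \langle z \rangle)$ lies in $S$ precisely when $xy = 0 = yz$ and $xz \ne 0$; and by Proposition \ref{prop:midpointcrossproduct}, for any such triple we automatically have $\langle y \rangle = \langle x \times z \rangle$. So every element of $S$ already has the form $(\langle x \rangle, \langle x \times z \rangle, \langle z \rangle)$, and the only remaining task is to identify, purely in terms of $x$ and $z$, exactly when a pair gives rise to such a triple.

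The key translation is this: the pair of conditions ``$xy = 0 = yz$ with $xz \ne 0$'' should be equivalent to ``$x \cdot z = 0$ and $x \times z \ne 0$''. For the forward direction, suppose $(\langle x \rangle, \langle y \rangle, \langle z \rangle) \in S$. Then $x, z \in \Ann_y$, and by Proposition \ref{prop:annihilator} the annihilator $\Ann_y$ is a null subspace, so $x \cdot z = 0$; moreover $xz \ne 0$ together with anticommutativity of elements of $\Ann_y$ (again Proposition \ref{prop:annihilator}) gives $x \times z = \frac{1}{2}(xz - zx) = xz \ne 0$. Conversely, suppose $x \cdot z = 0$ and $x \times z \ne 0$. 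Since $x$ and $z$ are orthogonal imaginary bioctonions they anticommute, so $xz = x \times z \ne 0$; thus $\langle x \rangle$ and $\langle z \rangle$ are not one roll away by Theorem \ref{thm:incidence_complex}, but they are at most two rolls away since $x \cdot z = 0$, hence exactly two rolls away. Setting $y = xz = x \times z$, one checks $xy = x(xz) = x^2 z = -(x\cdot x)z = 0$ since $x$ is null, and similarly $yz = (xz)z = x z^2 = 0$; alternativity of the bioctonions lets us reassociate here. So $(\langle x \rangle, \langle x \times z \rangle, \langle z \rangle)$ satisfies the defining conditions of $S$ from Proposition \ref{prop:algebraic}.

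Assembling these two directions: the set $S$ consists exactly of triples $(\langle x \rangle, \langle x \times z \rangle, \langle z \rangle)$ where $x$ and $z$ range over null vectors with $x \cdot z = 0$ and $x \times z \ne 0$. The only subtle point to be careful about — and what I expect to be the main obstacle, though it is minor — is checking that the description is genuinely well-posed: that is, the middle entry $\langle x \times z \rangle$ is determined by the outer entries $\langle x \rangle, \langle z \rangle$ and does not depend on the choice of representative vectors $x, z$. This follows because rescaling $x$ or $z$ rescales $x \times z$, and because uniqueness of the midpoint (Proposition \ref{prop:rolls}) guarantees that two rolls away determines the middle point uniquely; Proposition \ref{prop:midpointcrossproduct} already records that this unique midpoint is $\langle x \times z \rangle$. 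With that in hand, the two set-containments above complete the proof.
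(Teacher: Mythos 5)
Your proof is correct and follows essentially the same route as the paper's, which simply cites Theorem \ref{thm:incidence_complex} (the conditions $x \cdot z = 0$, $x \times z \neq 0$ say $\langle x\rangle$ and $\langle z\rangle$ are two rolls apart) and Proposition \ref{prop:midpointcrossproduct} (the midpoint is $\langle x\times z\rangle$); you are merely unpacking that argument, reverifying the equivalence with the description in Proposition \ref{prop:algebraic} by hand via Proposition \ref{prop:annihilator} and alternativity. The one point you gloss over in the converse direction is that $y = xz$ must actually be a \emph{null} imaginary bioctonion for $\langle y\rangle$ to be a point of $\P C^\C$ and for Proposition \ref{prop:algebraic} to apply; this is immediate ($Q(xz) = Q(x)Q(z) = 0$ since the bioctonions are a composition algebra, and $x \times z$ is imaginary by construction), and in the paper it is silently absorbed into Theorem \ref{thm:incidence_complex}, but it deserves a sentence.
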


\begin{proof}
By Theorem \ref{thm:incidence_complex} the conditions here say that
$\langle x \rangle$ is two rolls away from $\langle z \rangle$, and 
we know from Proposition \ref{prop:midpointcrossproduct} that in 
this case their midpoint is $\langle x \times z \rangle$. 
\end{proof}

\begin{prop} 
We have:
\begin{enumerate}  
	\item $i(S) = \{ (\langle x \rangle, \langle z \rangle ) : \;
	x \cdot z = 0, \quad x \times z \neq 0 \}$.
	\item $i(S)$ is a complex submanifold of $\PC^\C \times \PC^\C$.
	\item $S$ is a complex submanifold of $(\PC^\C)^3$.
	\item $i \maps S \to \PC^\C \times \PC^\C$ is an embedding
	of $S$ as a complex submanifold of $\PC^\C \times \PC^\C$.
	\end{enumerate}
\end{prop}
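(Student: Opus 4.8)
The plan is to dispatch the four parts in order, with essentially all of the work concentrated in part 2; parts 1, 3 and 4 then come out formally.

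Part 1 is immediate. Proposition \ref{prop:altalg} already exhibits $S$ as the set of triples $(\langle x\rangle,\langle x\times z\rangle,\langle z\rangle)$ with $x\cdot z = 0$ and $x\times z\ne 0$, and $i$ merely deletes the middle entry, so $i(S) = \{(\langle x\rangle,\langle z\rangle) : x\cdot z = 0,\ x\times z\ne 0\}$.

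For part 2 I would realize $i(S)$ as an open piece of the smooth locus of a hypersurface. The dot product defines a holomorphic section $\sigma$ of $L^\C\boxtimes L^\C$ over $\P C^\C\times\P C^\C$ with value ``$x\cdot z$'' at $(\langle x\rangle,\langle z\rangle)$, and by part 1, $i(S)$ is the vanishing locus of $\sigma$ intersected with the open condition $x\times z\ne 0$. Near a point $p = (\langle x\rangle,\langle z\rangle)\in i(S)$, choose local holomorphic lifts of the two tautological bundles so that $\sigma$ becomes an honest holomorphic function $F$ on a neighborhood $U$ with $F(\langle x'\rangle,\langle z'\rangle) = x'\cdot z'$. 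The crux is that $d_pF\ne 0$. Recalling $T_{\langle x\rangle}\P C^\C\cong x^\perp/\langle x\rangle$, a tangent vector at $p$ is represented by a pair $(\dot x,\dot z)$ with $\dot x\cdot x = \dot z\cdot z = 0$, and $d_pF(\dot x,\dot z) = \dot x\cdot z + x\cdot\dot z$ (well defined modulo $\langle x\rangle$ and $\langle z\rangle$ precisely because $x\cdot z = 0$ on $i(S)$). I would take $\dot z = 0$ and pick $\dot x\in x^\perp$ with $\dot x\cdot z\ne 0$: such $\dot x$ exists because $x^\perp\subseteq z^\perp$ would force $\langle z\rangle\subseteq\langle x\rangle$ (the form on $\I^\C$ being nondegenerate), hence $x\times z = 0$, contradicting $p\in i(S)$; and the very inequality $\dot x\cdot z\ne 0 = x\cdot z$ shows $\dot x\notin\langle x\rangle$, so it gives a genuine nonzero tangent vector. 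Hence $F$ vanishes transversally along $i(S)\cap U$, which is therefore a smooth complex hypersurface, and $i(S)$ is a complex submanifold of $\P C^\C\times\P C^\C$ of dimension $9$.

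For parts 3 and 4 I would show $i$ is a biholomorphism onto $i(S)$. It is injective: if $i(a,b,c) = (\langle x\rangle,\langle z\rangle)$ then $b$ is the midpoint of $a$ and $c$, hence unique, and by Proposition \ref{prop:midpointcrossproduct} equals $\langle x\times z\rangle$. Thus $i$ has inverse $j\maps(\langle x\rangle,\langle z\rangle)\mapsto(\langle x\rangle,\langle x\times z\rangle,\langle z\rangle)$, which is holomorphic since the cross product is polynomial and $x\times z\ne 0$ on $i(S)$. Regarded as a map $i(S)\to(\P C^\C)^3$, $j$ is an immersion (the projection to the first and third factors is a left inverse) and a homeomorphism onto $S$ (with continuous inverse that same projection), hence an embedding; so $S = j(i(S))$ is a complex submanifold of $(\P C^\C)^3$, and since $i = j^{-1}$, the map $i$ embeds $S$ as the complex submanifold $i(S)$.

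The main obstacle is the transversality statement $d_pF\ne 0$ in part 2: it is the only genuine computation, and it is exactly where the hypothesis $x\times z\ne 0$ — equivalently, that $\langle x\rangle$ and $\langle z\rangle$ are two rolls and not fewer apart — gets used. Once that is in hand, the tight relationship between $S$, $i(S)$ and the cross product makes parts 3 and 4 automatic.
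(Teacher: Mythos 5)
Your proposal is correct and, at its core, is the same argument as the paper's: the paper lifts to the affine cone $(C^\C-0)^2$ and shows the function $(x,z)\mapsto x\cdot z$ has constant rank one on the open set where $x\times z\neq 0$ (precisely your computation that $x^\perp\subseteq z^\perp$ would force $\langle x\rangle=\langle z\rangle$), then descends; you instead work directly on $\P C^\C\times\P C^\C$ via a local trivialization of $L^\C\boxtimes L^\C$, which is the same differential check in different clothing. Parts 3 and 4 are also handled the same way in both, by exhibiting $S$ as the graph of $(\langle x\rangle,\langle z\rangle)\mapsto\langle x\times z\rangle$ over $i(S)$.
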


\begin{proof}

Part 1 is clear from Proposition \ref{prop:altalg}.

For Part 2, to show $i(S)$ is a complex submanifold of $\PC^\C$, we show that
its preimage under the quotient map
\[ q \maps (C^\C - 0)^2 \to \PC^\C \times \PC^\C  \]
is a submanifold. Let us call this preimage $X$:
\[ X = q^{-1}( i(S) ) . \]
The set $X$ is contained in the open set $U$ on which the cross product
is nonvanishing:
\[ X \subset U = \{ (x,z) \in (C^\C - 0)^2 : \; x \times z \neq 0 \} . \]
This open set is a complex submanifold itself. On this open set, we can verify
that the dot product is a map of constant rank:
\[ \begin{array}{cccc} f \maps & U & \to & \C \\
		& (x,z) & \mapsto & x \cdot z .
	\end{array}  
\]
It follows that the preimage of zero under this map of constant rank, $X =
f^{-1}(0)$, is a submanifold.

To check that $f$ indeed has constant rank, we compute the rank of its
derivative at the point $(x,z) \in U \subset (C^\C - 0)^2$:
\[ \begin{array}{cccc} f_* \maps & T_{(x,z)} U & \to & \C \\  \\
	& (\dot{x},\dot{z}) & \mapsto & \dot{x} \cdot z + x \cdot \dot{z} .
	\end{array} 
\]
The linear map $f_*$ has rank one if and only if it is nonzero. Here,
since $U$ is an open subset of $(C^\C - 0)^2$, $\dot{x}$ is a tangent
vector to $C^\C$ at the point $x$, which we can identify with the set of
all vectors in the ambient vector space, $\I^\C$, that are orthogonal
to $x$. Likewise, $\dot{z}$ is in the set of all vectors
orthogonal to $z$. The derivative $f_*$ vanishes if and only if these
sets are equal: if $x$ is a nonzero multiple of $z$. But then their
cross product will vanish, so such pairs are excluded from $U$. Thus
$f$ has constant rank on $U$.

Finally, because $X$ is the inverse image of some set under the
quotient map $q$, its image $i(S)$ is also a submanifold. This
completes the proof of part 2.

Part 3 follows because $S$ is the graph of a holomorphic function 
\[ \begin{array}{cccc} g \maps & i(S) & \to & \PC^\C \\
		 & (\langle x \rangle, \langle z \rangle) & \mapsto & \langle x \times z \rangle . \\
	 \end{array} 
\]
The graph of this function is $S$, and is a submanifold of $i(S) \times
\PC^\C$, and in turn this is a submanifold of $(\PC^\C)^3$.

For part 4, note that $i$ is an embedding of complex manifolds because
the map from the graph of a holomorphic map to its domain is always an
embedding.
\end{proof}

As a side-note, we have:

\begin{prop} $p \maps S \to \PC^\C$ is a holomorphic
fiber bundle.
\end{prop}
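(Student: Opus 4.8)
The plan is to identify $p \maps S \to \P C^\C$ with an \emph{associated bundle} of the holomorphic principal bundle $\G_2^\C \to \G_2^\C/P \cong \P C^\C$ already invoked in the proof of Theorem \ref{thm:complex_sections}, where $P$ is the stabilizer of a chosen point $b_0 = \langle y_0\rangle \in \P C^\C$. The group $\G_2^\C$ acts holomorphically on $\I^\C$ preserving the cross product, hence the whole bioctonion algebra; so it preserves the incidence geometry of $\P C^\C$ --- points, lines, and the relation of being one roll away --- and therefore acts diagonally on $S \subset (\P C^\C)^3$. Since this action fixes the middle coordinate setwise and $p$ reads off that coordinate, $p$ is $\G_2^\C$-equivariant; and since $S$ is a complex submanifold of $(\P C^\C)^3$ (shown just above) while $\G_2^\C$ acts transitively on the base, we are in the classical situation of an equivariant holomorphic map to a homogeneous space.

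First I would describe the fibre. By Proposition \ref{prop:algebraic}, $F := p^{-1}(b_0)$ consists of the triples $(\langle x\rangle, \langle y_0\rangle, \langle z\rangle)$ with $x, z \in \Ann_{y_0}$ and $xz \ne 0$. Proposition \ref{prop:annihilator} says $\Ann_{y_0}$ is a $3$-dimensional null subspace of $\I^\C$, so its projectivization $\P(\Ann_{y_0})$ is a copy of $\C\P^2$ sitting inside $\P C^\C$; moreover, just as for the split octonions, there is no $3$-dimensional null subalgebra, so if two linearly independent $x, z \in \Ann_{y_0}$ had $xz = 0$ then $\langle x, z, y_0\rangle$ would be one. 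Hence $xz \ne 0$ exactly when $\langle x\rangle \ne \langle z\rangle$, and $F$ is $\P(\Ann_{y_0}) \times \P(\Ann_{y_0})$ with its diagonal removed --- an open, hence complex, submanifold of $(\P C^\C)^3$, on which $P$ acts holomorphically by equivariance.

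Then I would form the holomorphic fibre bundle $\G_2^\C \times_P F \to \G_2^\C/P$ and check that the action map
\[ \Phi \maps \G_2^\C \times_P F \longrightarrow S, \qquad [g,f] \longmapsto g\cdot f \]
is a $\G_2^\C$-equivariant biholomorphism lying over $\P C^\C$. It is well defined and holomorphic because the action is; it is surjective because $\G_2^\C$ is transitive on the base and $p$ is surjective; it is injective by a direct check using the equivariance of $p$; and its inverse is holomorphic since, over any open $U \subset \P C^\C$ admitting a holomorphic section $\sigma \maps U \to \G_2^\C$ of the principal bundle, it is given by $s \mapsto \bigl[\sigma(p(s)),\, \sigma(p(s))^{-1}\cdot s\bigr]$. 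This exhibits $p$ as the bundle $\G_2^\C \times_P F \to \G_2^\C/P$, which is a holomorphic fibre bundle with fibre $F \cong \C\P^2 \times \C\P^2$ minus the diagonal.

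I expect no genuine mathematical obstacle here: the substantive work has already been done in showing that $S$ and $i(S)$ are complex manifolds, and what remains is bookkeeping --- verifying that these complex structures are compatible with the homogeneous structure on $\P C^\C$ and that the balanced-product quotient $\G_2^\C \times_P F$ is formed correctly. If one prefers to avoid $\G_2^\C$ entirely, there is a parallel route: the assignment $\langle y\rangle \mapsto \Ann_y$ defines a holomorphic rank-$3$ subbundle $\mathcal A$ of the trivial bundle $\P C^\C \times \I^\C$, the rank being constant by Proposition \ref{prop:annihilator}; then $S$ is the complement of the diagonal subbundle inside the fibrewise product $\P\mathcal A \times_{\P C^\C} \P\mathcal A$, and this is manifestly a holomorphic fibre bundle over $\P C^\C$.
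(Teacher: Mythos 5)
Your approach---realizing $p$ as the associated bundle $\G_2^\C \times_P F \to \G_2^\C/P$---is genuinely different from the paper's, and in one respect more complete: the paper merely identifies the fiber and explicitly leaves the verification of local triviality to the reader, whereas your construction dispatches local triviality by pulling in holomorphic local sections of the principal bundle $\G_2^\C \to \G_2^\C/P$. The equivariance arguments and the check that the action map $\Phi$ is biholomorphic are standard and sound, and the alternative route via the annihilator subbundle $\mathcal{A}$ is in the same good spirit.

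However, your explicit identification of the fiber is wrong. You argue that for linearly independent $x, z \in \Ann_{y_0}$, $xz = 0$ would force $\langle x, z, y_0\rangle$ to be a $3$-dimensional null subalgebra, a contradiction, and conclude $xz \neq 0$ exactly when $\langle x\rangle \neq \langle z\rangle$. This overlooks the case $y_0 \in \langle x, z\rangle$, where $\langle x, z, y_0 \rangle = \langle x, z\rangle$ is only $2$-dimensional and the contradiction evaporates. And this case genuinely occurs: take any $2$-dimensional null subalgebra $V$ with $y_0 \in V$ (i.e.\ any line through $\langle y_0\rangle$), and let $x, z$ span $V$ with neither proportional to $y_0$; then $x, z \in \Ann_{y_0}$ are linearly independent yet $xz = 0$. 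In fact, in the model of $\Ann_{y_0}$ from the proof of Proposition \ref{lem:annihilator}, a direct computation shows that for $x, z \in \Ann_{y_0}$ one has $xz$ proportional to $y_0$ with coefficient $2\det(y_0,x,z)$ (the determinant taken in the $3$-dimensional space $\Ann_{y_0}$ under that identification), so $xz = 0$ exactly when $y_0, x, z$ are linearly dependent. The fiber $F$ is therefore $\P\Ann_{y_0} \times \P\Ann_{y_0}$ with all pairs \emph{collinear with} $\langle y_0\rangle$ in the projective plane $\P\Ann_{y_0}$ removed, not merely the diagonal; this also shows the paper's own description, which removes only the diagonal and the pairs containing $\langle y_0\rangle$, is incomplete in the same direction. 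Fortunately this error is not fatal to your argument: since $xz \neq 0$ is an open condition, $F$ is still an open, hence complex, $P$-invariant submanifold of $\P\Ann_{y_0} \times \P\Ann_{y_0}$, and the associated-bundle construction goes through unchanged once $F$ is corrected.
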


\begin{proof} 
Over any point $\langle y \rangle \in \PC^\C$ the fiber of $p$ is $\Proj
\Ann_y \times \Proj \Ann_y$ with those pairs of points that are not two
rolls apart removed.  For this we need to remove any pair that
includes $\langle y \rangle$, as well as any pair on the diagonal
subset, $D$.  Thus the fiber is the complex manifold
\[ p^{-1}\langle y \rangle = \Proj \Ann_y \times \Proj \Ann_y \; - \; 
\langle y \rangle \times \Proj \Ann_y \; - \; \Proj \Ann_y \times \langle y
\rangle \; - \; D. \]
We leave the proof of local triviality as an exercise for the reader,
since we will not be using this fact.
\end{proof}

Since $i$ is a complex analytic diffeomorphism onto its image $i(S)$,
we can push forward any holomorphic line bundle $\Lambda$ on $S$ to a
holomorphic line bundle over $i(S)$, which we call $i_* \Lambda$.
Thus, we obtain a holomorphic line bundle $i_* p^* L^\C$ over $i(S)$.
However, this is isomorphic to the line bundle $L^\C \boxtimes L^\C$
restricted to $i(S)$.

To see this, note that the fiber of $i_* p^* L$ over a point $(\langle
x \rangle, \langle z \rangle)$ of $i(S)$ is $L_{\langle x \times z
\rangle}$, the dual of the line $\langle x \times z \rangle$ in
$\I^\C$. On the other hand, the fiber $L^\C \boxtimes L^\C \big|_{i(S)}$ is
$L_{\langle x \rangle} \tensor L_{\langle z \rangle}$. Since the cross
product gives a map:
\[ \langle x \rangle \tensor \langle z \rangle \to 
\langle x \times z \rangle \]
dualizing yields a map:
\[ \Theta_{(\langle x \rangle, \langle z \rangle)} \maps 
L_{\langle x \times z \rangle} \to 
L_{\langle x \rangle} \tensor L_{\langle z \rangle} . \]
This may seem like a deceptive trick, since in a moment will use $\Theta$ to
construct the cross product. However, $\Theta$ can be characterized in other
ways, at least up to a constant multiple:

\begin{prop}
\label{prop:iso}
The map 
\[\Theta \maps i_* p^* L^\C \to L^\C \boxtimes L^\C \big|_{i(S)}  \]
is an isomorphism of holomorphic line bundles 
that is equivariant with respect to the action of $\G_2^\C$. Moreover, any other
$\G_2^\C$-equivariant map between these line bundles is a constant multiple of
$\Theta$. 
\end{prop}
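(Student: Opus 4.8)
The plan is to establish the three assertions in turn: that $\Theta$ is an isomorphism of holomorphic line bundles, that it is $\G_2^\C$-equivariant, and that these two properties determine it up to a nonzero scalar. For the first, recall that over a point $(\langle x \rangle, \langle z \rangle) \in i(S)$ the fiber map $\Theta_{(\langle x \rangle, \langle z \rangle)}$ is the transpose of the linear map
\[ m_{(\langle x \rangle, \langle z \rangle)} \maps \langle x \rangle \tensor \langle z \rangle \to \langle x \times z \rangle , \qquad x \tensor z \mapsto x \times z . \]
Since by Proposition \ref{prop:altalg} the set $i(S)$ consists exactly of those pairs with $x \times z \ne 0$, this $m$ is a nonzero linear map between one-dimensional spaces, hence an isomorphism, and hence so is its transpose. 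The cross product is bilinear, so the family $m$, and therefore $\Theta$, varies holomorphically; a holomorphic bundle map that is fiberwise an isomorphism is an isomorphism of holomorphic line bundles, proving the first claim.

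For equivariance, note that $\G_2^\C$ acts on $\I^\C$ preserving the cross product, so $g(x \times z) = (gx) \times (gz)$; thus $g$ carries $m_{(\langle x \rangle, \langle z \rangle)}$ to $m_{(\langle gx \rangle, \langle gz \rangle)}$. The set $S$ is defined purely through the incidence geometry of $\P C^\C$, which $\G_2^\C$ preserves, and the maps $p$ and $i$ are $\G_2^\C$-equivariant, so $i_* p^* L^\C$ and $L^\C \boxtimes L^\C\big|_{i(S)}$ inherit natural $\G_2^\C$-actions; transposing the relation between the maps $m$ shows $\Theta$ intertwines these actions, so $\Theta$ is $\G_2^\C$-equivariant.

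It remains to prove uniqueness up to a constant, which is the heart of the matter. Since $\Theta$ is already an isomorphism, any $\G_2^\C$-equivariant bundle map $\Theta'$ between the same two line bundles can be written $\Theta' = f\,\Theta$ for a unique holomorphic function $f$ on $i(S)$ — because $\Theta^{-1} \circ \Theta'$ is an endomorphism of the line bundle $i_* p^* L^\C$, and every endomorphism of a line bundle is multiplication by a holomorphic function — and $\Theta'$ is then equivariant precisely when $f$ is $\G_2^\C$-invariant. So it suffices to show that the only $\G_2^\C$-invariant holomorphic functions on $i(S)$ are the constants, and for this I would show that $\G_2^\C$ acts transitively on $i(S)$.

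By Proposition \ref{prop:altalg}, $i(S)$ is exactly the set of pairs of points of $\P C^\C$ that are two rolls apart. The null-triple apparatus of Section \ref{sec:nulltriples} — Propositions \ref{prop:pairtotriple} and \ref{prop:extensiontonulltriple}, together with Theorems \ref{thm:nulltriple} and \ref{thm:torsor} — goes through verbatim for the bioctonions, since its only algebraic inputs are that $\O^\C$ is an alternative composition algebra and the dot- and cross-product identities that follow (compare Theorem \ref{thm:incidence_complex} and Proposition \ref{prop:annihilator}); in particular every pair two rolls apart extends to a complex null triple and $\G_2^\C$ acts simply transitively on such triples, so it acts transitively on $i(S)$. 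As $\G_2^\C$ is connected, $i(S)$ is then a connected homogeneous space, on which every invariant holomorphic function is constant; hence $f$ is constant and $\Theta'$ is a constant multiple of $\Theta$. The one step I expect to be delicate is exactly this transitivity claim: one must be sure the null-triple proofs nowhere used reality or positivity (they do not, only alternativity and composition), and that ``two rolls apart'' is a single $\G_2^\C$-orbit rather than a union of orbits of equal dimension; the cleanest fix is to state and prove the complex analogue of Theorem \ref{thm:torsor} before invoking it here.
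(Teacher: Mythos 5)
Your proof is correct and follows essentially the same route as the paper's: show $\Theta$ is fiberwise nonzero hence an isomorphism because $x\times z\neq0$ on $i(S)$, observe everything is built equivariantly, and reduce uniqueness to the fact that $\G_2^\C$ acts transitively on $i(S)$ (the complexification of Theorem \ref{thm:orbits}), so that the scalar relating two equivariant bundle maps is forced to be constant. The only cosmetic difference is that you phrase the last step as "an invariant holomorphic function on a connected homogeneous space is constant" where the paper fixes one point and propagates the ratio by equivariance; your caution that the complex analogue of Theorem \ref{thm:torsor} should really be stated is a fair criticism of the paper's "straightforward to generalize."
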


\begin{proof} 
The map $\Theta$ is holomorphic by construction, and because the cross
product $x \times z$ is nonzero for $(\langle x \rangle, \langle z
\rangle) \in i(S)$, $\Theta$ is an isomorphism on each fiber.  Since
everything used to construct $\Theta$ is $\G_2^\C$-equivariant,
$\Theta$ is as well.

Now let $\Theta'$ be another $\G_2^\C$-equivariant map:
\[\Theta' \maps i_* p^* L^\C \to L^\C \boxtimes L^\C \big|_{i(S)}  \]
To prove the claim that $\Theta'$ is a constant multiple of $\Theta$, first
recall that Theorem \ref{thm:orbits} states that the set of pairs in $\PC \times
\PC$ that are two rolls apart is an orbit of $\G'_2$. This result is
straightforward to generalize to the complexification, and so we conclude that the
set of pairs in $\PC^\C \times \PC^\C$ that are two rolls apart is an orbit
of $\G_2^\C$. This is the set $i(S)$, so $\G_2^\C$ acts transitively on $i(S)$.

Picking any point $(a,c) \in i(S)$, we have 
$\Theta'_{(a,c)} = \alpha \Theta_{(a,c)}$
for some constant $\alpha$, since $\Theta_{(a,c)}$ spans the one-dimensional
space of maps between the one-dimensional fibers. Using the transitive action
of $\G_2^\C$ and the equivariance of $\Theta$ and $\Theta'$, we conclude
$\Theta' = \alpha \Theta$.
\end{proof}

In what follows, we use $\Gamma$ to denote the space of global
holomorphic sections of a holomorphic line bundle over a complex
manifold:

\begin{prop}
\label{prop:unique_extension}
There is a linear map 
\[  \Delta \maps \Gamma(L^\C) \to \Gamma(L^\C \boxtimes L^\C)  \]
that is equivariant with respect to the action of $G_2^\C$ and has the
property that if $\psi \in \Gamma(L^\C)$, then $\Delta \psi$ extends
$\Theta i_* p^* \psi$ from $i(S)$ to all of $\PC^\C \times \PC^\C$.
\end{prop}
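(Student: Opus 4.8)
The plan is to build $\Delta$ explicitly from the correspondence maps and then use the Bott--Borel--Weil picture (Theorem \ref{thm:complex_sections}) together with $\G_2^\C$-equivariance to show that the section $\Theta i_* p^* \psi$ on $i(S)$ extends uniquely to a global holomorphic section of $L^\C \boxtimes L^\C$. First I would recall that $\Gamma(L^\C) \cong \I^\C$, an irreducible representation of $\G_2^\C$, and that $\Gamma(L^\C \boxtimes L^\C) \cong \I^\C \otimes \I^\C$ by the K\"unneth formula for sections of an external tensor product. So the claimed map $\Delta$ is really a candidate for a $\G_2^\C$-equivariant linear map $\I^\C \to \I^\C \otimes \I^\C$, and the content is that \emph{some} such map has the stated restriction property.

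The key steps, in order. (1) Given $\psi = s_w \in \Gamma(L^\C)$ with $w \in \I^\C$, form $p^* s_w$ on $S$: at the point $(\langle x\rangle, \langle x\times z\rangle, \langle z\rangle)$ this is the functional on $\langle x\times z\rangle$ sending $x\times z \mapsto w\cdot(x\times z)$. (2) Transport to $i(S)$ via the diffeomorphism $i$, then apply $\Theta$: since $\Theta_{(\langle x\rangle,\langle z\rangle)}$ is the dual of the cross-product map $\langle x\rangle\otimes\langle z\rangle\to\langle x\times z\rangle$, the resulting section of $L^\C\boxtimes L^\C\big|_{i(S)}$ sends $x\otimes z \mapsto w\cdot(x\times z)$. (3) Observe that the function $(x,z)\mapsto w\cdot(x\times z) = w\cdot(xz)$ is the restriction to $C^\C\times C^\C$ of a \emph{bilinear} form on all of $\I^\C\times\I^\C$, hence visibly extends to a global holomorphic section $\widetilde{s_w}$ of $L^\C\boxtimes L^\C$ over $\P C^\C\times\P C^\C$. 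Define $\Delta s_w := \widetilde{s_w}$; linearity in $w$ is immediate, and equivariance follows because $w\cdot(gx\times gz) = g^{-1}w\cdot(x\times z)$ under the invariance of the dot and cross products, i.e. $\Delta$ intertwines the $\G_2^\C$-actions. (4) Finally, uniqueness of the extension: $i(S)$ is Zariski-dense in $\P C^\C\times\P C^\C$ (its complement is the proper subvariety where $x\cdot z = 0$ fails only on a lower-dimensional locus together with the $x\times z=0$ locus, and in any case $\P C^\C\times\P C^\C$ is irreducible), so two holomorphic sections of $L^\C\boxtimes L^\C$ agreeing on $i(S)$ agree everywhere; hence $\Delta\psi$ is the unique such extension and the correspondence picture pins it down canonically.

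The main obstacle I anticipate is step (3)--(4): one must be careful that the naive formula $x\otimes z\mapsto w\cdot(x\times z)$ really does define a \emph{global} holomorphic section of $L^\C\boxtimes L^\C$ and not merely a section over the locus where $x\times z\neq 0$. The point is that passing through $\Theta$ looks like it could introduce a pole along $\{x\times z = 0\}$, but it does not: the section, written in terms of the tautological coordinates, is the restriction of the globally-defined bilinear pairing $(x,z)\mapsto w\cdot(x\times z)$, which is a degree-$(1,1)$ polynomial and so a bona fide section of $L^\C\boxtimes L^\C$ over all of $\P C^\C\times\P C^\C$. Once that is seen clearly, density of $i(S)$ gives uniqueness and the rest is formal. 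I would also remark that equivariance of $\Delta$ plus Schur's lemma already forces $\Delta$ to be unique up to scalar among equivariant maps once we know $\I^\C\otimes\I^\C$ contains the representation $\I^\C$ with multiplicity one --- which it does, since the cross product is the essentially unique equivariant map $\I^\C\otimes\I^\C\to\I^\C$ --- so the extension property and equivariance together determine $\Delta$ up to the same scalar ambiguity that appears in $\Theta$, consistent with Proposition \ref{prop:iso}.
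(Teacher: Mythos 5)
Your proposal is correct and follows essentially the same route as the paper: identify $\psi = s_w$ via Theorem~\ref{thm:complex_sections}, trace through $\Theta i_* p^*$ to get the formula $x \otimes z \mapsto w \cdot (x \times z)$ on $i(S)$, and observe that this bilinear expression already makes sense as a global holomorphic section of $L^\C \boxtimes L^\C$. The extra remarks on density of $i(S)$ and uniqueness of the extension are sound but beyond what the proposition asserts; otherwise the arguments coincide.
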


\begin{proof} 
For any point $\langle y \rangle \in \PC^\C$, $\psi_{\langle y
\rangle}$ is an element of $L_{\langle y \rangle}$, meaning a linear
functional on the 1-dimensional subspace $\langle y \rangle$.  By
Theorem \ref{thm:complex_sections}, there exists an imaginary
bioctonion $w$ such that $\psi = s_w$.  In other words, $\psi$ is
determined by the fact that
\[       \psi_{\langle y \rangle} \maps  y \mapsto w \cdot y .\]
The fiber of $p^* L^\C$ over $(\langle x \rangle, \langle y \rangle,
\langle z \rangle)$ is just $L_{\langle y \rangle}$, and by definition
of the pullback, 
\[ (p^* \psi)_{(\langle x \rangle , \langle y \rangle, \langle z \rangle)} 
\maps  y \mapsto w \cdot y .\]
However, by Proposition \ref{prop:midpointcrossproduct}, we know
$\langle y \rangle = \langle x \times z \rangle$.  Pushing forward along $i$,
we thus have
\[ (i_* p^* \psi)_{(\langle x \rangle , \langle z \rangle)} 
\maps  x \times z \mapsto w \cdot (x \times z) .\]
Finally, applying $\Theta$, we obtain
\[ (\Theta i_* p^* \psi)_{(\langle x \rangle , \langle z \rangle)} 
\maps x \tensor z \mapsto w \cdot (x \times z) .\] 
Since this formula makes sense for all pairs $(\langle x \rangle,
\langle z \rangle)$, and not just those in $i(S)$, we see that $\Theta
i_* p^* \psi$ extends to a global section of $L^\C \boxtimes L^\C$
over $\PC^\C \times \PC^\C$, given by
\[ (\Delta \psi)_{(\langle x \rangle , \langle z \rangle)} 
\maps x \tensor z \mapsto w \cdot (x \times z) .\] 
This section $\Delta \psi$ is holomorphic because
$\psi$ and the cross product are both holomorphic.  Moreover,
$\Delta \psi$ depends linearly on $\psi$, and it is equivariant
by construction.
\end{proof}

Using the canonical isomorphism
\[  \Gamma(L^\C \boxtimes L^\C) \cong \Gamma(L^\C) \otimes \Gamma(L^\C) \] 
together with the isomorphism 
\[   \Gamma(L^\C) \cong \I^\C \]
given by Theorem \ref{thm:complex_sections}, we can reinterpret $\Delta$ 
as a linear map
\[     \Delta \maps \I^\C \to \I^\C \otimes \I^\C.  \]
Furthermore, $\I^\C$ is canonically identified with its dual using
the dot product of imaginary bioctonions.  This allows us to 
identify the adjoint of $\Delta$ with a linear map we call
\[   \Delta^* \maps \I^\C \otimes \I^\C \to \I^\C .  \]

\begin{prop}
\label{prop:cross_product}
The adjoint $\Delta^* \maps \I^\C \otimes \I^\C \to \I^\C $ is the
cross product.
\end{prop}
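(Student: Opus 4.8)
The plan is to reduce the claim to decomposable tensors and then simply unwind the definition of the adjoint, feeding in the explicit formula for $\Delta$ from Proposition \ref{prop:unique_extension} together with the two identifications $\Gamma(L^\C)\cong\I^\C$ and $\Gamma(L^\C\boxtimes L^\C)\cong\I^\C\otimes\I^\C$. Since every map in sight is linear, it suffices to compute $\Delta^*(u\otimes v)$ for a decomposable tensor $u\otimes v$ and show it equals $u\times v$; extending linearly then identifies $\Delta^*$ with the cross product $\I^\C\otimes\I^\C\to\I^\C$.

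First I would record the dictionary between elements of $\I^\C\otimes\I^\C$ and holomorphic sections of $L^\C\boxtimes L^\C$. Applying Theorem \ref{thm:complex_sections} in each factor, a decomposable tensor $a\otimes b$ corresponds to the section $s_a\boxtimes s_b$, whose value at $(\langle x\rangle,\langle z\rangle)$ is the functional $x\otimes z\mapsto (a\cdot x)(b\cdot z)$; extending linearly, a general $\eta\in\I^\C\otimes\I^\C$ corresponds to the section $(\langle x\rangle,\langle z\rangle)\mapsto\bigl(x\otimes z\mapsto\langle\eta,\,x\otimes z\rangle\bigr)$, where $\langle\,,\,\rangle$ is the symmetric bilinear pairing on $\I^\C\otimes\I^\C$ given by the tensor square of the dot product. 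With this dictionary in hand, Proposition \ref{prop:unique_extension} says exactly that, under $\Gamma(L^\C)\cong\I^\C$, the map $\Delta$ sends $w$ to the element $\eta_w\in\I^\C\otimes\I^\C$ characterized by
\[ \langle\eta_w,\,x\otimes z\rangle \;=\; w\cdot(x\times z), \qquad x,z\in\I^\C . \]
The displayed identity is guaranteed by Proposition \ref{prop:unique_extension} for $x,z$ in the null cone $C^\C$, where the sections are genuinely defined; since $C^\C$ spans $\I^\C$ and both sides are bilinear in $(x,z)$, it then holds for all $x,z\in\I^\C$.

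Now I would unwind the adjoint. By construction $\Delta^*$ is characterized by $\Delta^*(\eta)\cdot w=\langle\eta,\,\Delta w\rangle$ for all $\eta\in\I^\C\otimes\I^\C$ and $w\in\I^\C$, the dot product on $\I^\C$ identifying it with its dual and the pairing $\langle\,,\,\rangle$ doing the same for $\I^\C\otimes\I^\C$. Taking $\eta=u\otimes v$ and using symmetry of $\langle\,,\,\rangle$ together with the formula for $\Delta w=\eta_w$,
\[ \Delta^*(u\otimes v)\cdot w \;=\; \langle u\otimes v,\,\eta_w\rangle \;=\; \langle\eta_w,\,u\otimes v\rangle \;=\; w\cdot(u\times v) \;=\; (u\times v)\cdot w . \]
As this holds for every $w\in\I^\C$ and the dot product on $\I^\C$ is nondegenerate, we conclude $\Delta^*(u\otimes v)=u\times v$, and linear extension finishes the proof.

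The computation is short, and the only real care needed is bookkeeping: which pairing is used to identify $\I^\C$ and $\I^\C\otimes\I^\C$ with their duals, and the remark that the defining identity for $\eta_w$ propagates from the null cone to all of $\I^\C$ by bilinearity. (One could instead note that $\Delta^*$ is $\G_2^\C$-equivariant and so, by irreducibility of $\I^\C$, a constant multiple of the cross product; but pinning down the constant then requires essentially the same explicit check, whereas the route above yields the identification on the nose, with no undetermined factor.)
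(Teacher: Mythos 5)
Your proof is correct and follows essentially the same route as the paper: both read off the adjoint directly from the explicit formula $(\Delta s_w)_{(\langle x\rangle,\langle z\rangle)}\colon x\otimes z\mapsto w\cdot(x\times z)$ established in Proposition \ref{prop:unique_extension}. The paper states the conclusion in two terse sentences; you usefully fill in the bookkeeping it leaves implicit, in particular the dictionary between $\Gamma(L^\C\boxtimes L^\C)$ and $\I^\C\otimes\I^\C$, the bilinear extension of the defining identity from the null cone $C^\C$ to all of $\I^\C$, and the nondegeneracy step that pins down $\Delta^*(u\otimes v)=u\times v$ exactly.
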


\begin{proof}
In the proof of Theorem \ref{prop:unique_extension} we saw that, up to a
nonzero constant factor, $\Delta$ sends the section $s_w$ of $L^\C$ to the
section of $L^\C \boxtimes L^\C$ given by
\[      (\Delta s_w)_{(\langle x \rangle , \langle z \rangle)}
\maps x \otimes z \mapsto w \cdot (x \times z)  .\]
This means that the adjoint of $\Delta$ is the cross product.  
\end{proof}

So far our construction may seem like `cheating', since we used the
cross product to define the map $\Delta$ whose adjoint is the cross
product.  However, we now show that that \emph{any} map with some of
the properties of $\Delta$ must give the cross product up to a
constant factor:

\begin{thm}
\label{thm:cross_product}
Suppose 
\[  \delta \maps \Gamma(L^\C) \to \Gamma(L^\C \boxtimes L^\C)  \]
is any linear map that is equivariant with respect to the action of
$\G_2^\C$.   Then identifying $\Gamma(L^\C)$ with
the imaginary bioctonions, the adjoint
\[    \delta^* \maps \I^\C \otimes \I^\C \to \I^\C \]
is the cross product up to a nonzero constant factor.
\end{thm}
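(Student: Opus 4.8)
The plan is to reduce the claim to a representation-theoretic uniqueness statement: the space $\Hom_{\G_2^\C}(\I^\C \otimes \I^\C, \I^\C)$ is one-dimensional, spanned by the cross product. Granting this, here is how the theorem follows. By Theorem \ref{thm:complex_sections} the identification $\Gamma(L^\C) \cong \I^\C$ is $\G_2^\C$-equivariant (the assignment $w \mapsto s_w$ is visibly so, since $\G_2^\C$ preserves the dot product), and the canonical isomorphism $\Gamma(L^\C \boxtimes L^\C) \cong \Gamma(L^\C) \otimes \Gamma(L^\C)$ used just before Proposition \ref{prop:cross_product} is equivariant as well. So a $\G_2^\C$-equivariant $\delta$ corresponds to a $\G_2^\C$-equivariant map $\I^\C \to \I^\C \otimes \I^\C$. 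Taking adjoints with respect to the dot product --- which is $\G_2^\C$-invariant, hence gives an equivariant identification $\I^\C \cong (\I^\C)^*$ --- turns this into an equivariant map $\delta^* \maps \I^\C \otimes \I^\C \to \I^\C$. The map $\Delta$ of Proposition \ref{prop:unique_extension} is one such map, and by Proposition \ref{prop:cross_product} its adjoint is the cross product; so once the $\Hom$-space is known to be one-dimensional we get $\delta = c\, \Delta$ for a scalar $c$, whence $\delta^* = c\, \Delta^*$ is the cross product scaled by $c$, with $c \ne 0$ exactly when $\delta \ne 0$.

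Everything thus comes down to the dimension count. I would argue it by decomposing $\I^\C \otimes \I^\C = \Sym^2 \I^\C \oplus \Lambda^2 \I^\C$ and invoking the classical decomposition of the tensor square of the $7$-dimensional fundamental representation of $\G_2^\C$: the symmetric square is the trivial representation (the invariant quadratic form $Q$) plus the $27$-dimensional irreducible, while the exterior square is a copy of $\I^\C$ plus the $14$-dimensional adjoint representation $\g_2^\C$. Hence $\I^\C$ appears in $\I^\C \otimes \I^\C$ with multiplicity exactly one --- and only inside the antisymmetric part, which incidentally re-explains why the cross product is antisymmetric. Since $\I^\C$ is irreducible, Schur's lemma makes $\Hom_{\G_2^\C}(\I^\C \otimes \I^\C, \I^\C)$ one-dimensional, and the cross product, being a nonzero element of it (it is $\G_2^\C$-equivariant because $\G_2^\C$ preserves the cross product), spans it.

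I expect the main obstacle to be establishing the decomposition of $\Lambda^2 \I^\C$ cleanly; the rest is formal once the equivariance of all the identifications is recorded. One can justify it by a Weyl-dimension or character computation, or simply cite it as standard $\G_2$ representation theory. A route that keeps everything at the level of invariants, and so may read more smoothly here, is this: the multiplicity of $\I^\C$ in $\I^\C \otimes \I^\C$ equals the multiplicity of the trivial representation in $(\I^\C)^{\otimes 3}$, using $(\I^\C)^* \cong \I^\C$, and the latter equals $1$ by the same tensor-square decomposition --- equivalently, $\I^\C$ carries, up to scale, a unique invariant trilinear form, namely $(x,y,z) \mapsto x \cdot (y \times z)$, a fact already flagged in the History section. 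Either way, the conclusion is that $\delta^*$ lies in a one-dimensional space spanned by the cross product, which is the assertion.
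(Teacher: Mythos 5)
Your proof is correct and follows the same essential line as the paper's: $\delta^*$ is an intertwiner, and $\Hom_{\G_2^\C}(\I^\C \otimes \I^\C, \I^\C)$ is one-dimensional because $\I^\C$ occurs with multiplicity one in $\I^\C \otimes \I^\C$. You go further than the paper in two useful ways --- you explicitly check that the identifications $\Gamma(L^\C)\cong\I^\C$ and $\Gamma(L^\C\boxtimes L^\C)\cong\Gamma(L^\C)\otimes\Gamma(L^\C)$ are equivariant, and you justify the multiplicity-one claim via the standard decomposition $\Sym^2\I^\C\cong\C\oplus V_{27}$, $\Lambda^2\I^\C\cong\I^\C\oplus\g_2^\C$ (or equivalently via uniqueness of the invariant trilinear form) --- whereas the paper simply asserts the multiplicity fact. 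These are additions of detail, not a different route.
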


\begin{proof}
By construction, $\delta^*$ is an intertwining operator between
representations of $\G_2^\C$.  However, any such intertwiner is a
constant multiple of the cross product, because the tensor square of
the 7-dimensional irreducible representation of $\G_2^\C$ contains
that irreducible representation with multiplicity one.
\end{proof}

\section{Conclusions}

The final theorem above raises a question.
What does our construction of the cross product of imaginary octonions
mean in terms of the physics of a rolling ball?  For a preliminary answer, 
we can naively imagine a section of $L^\C \to \PC^\C$ as a wavefunction
describing the quantum state of a rolling ball.  Then we can take such
a quantum state, `duplicate' it to get a quantum state of two new
rolling balls of which the original one was the midpoint, and extend
this to get a quantum state of an arbitrary pair of rolling balls.
This procedure gives a linear map
\[           \I^\C \to \I^C \otimes \I^C \]
whose adjoint is the cross product.  

However, this account is at best only roughly correct.  Since the real
projective quadric $\PC$ is the configuration space of a rolling
spinorial ball on a projective plane, we would expect to quantize this
system by forming the cotangent bundle $T^* \PC$, a symplectic
manifold, and applying some quantization procedure to that.  Instead
we passed to the complexification $\PC^\C$, which is in fact K\"ahler,
and applied geometric quantization to that.  Since there are
neighborhoods of $\PC$ in $\PC^\C$ that are isomorphic as
symplectic manifolds to neighborhoods of the zero section of $T^* \PC$, we can
loosely think of $\PC^\C$ as a way of modifying the
cotangent bundle to make it compact.  In a rough sense this amounts to
putting a `speed limit' on the motion of the rolling ball, making its
Hilbert space of states finite-dimensional.  However, it would be good
to understand this more precisely.  This might also clarify the
physical significance, if any, of the real vector space $\I$ obtained
by placing an extra condition on the vectors in the space $\I^\C$ obtained by
geometrically quantizing $\PC^\C$.

\section*{Acknowledgements}

This work began with a decade's worth of conversations with James Dolan, and
would not have been possible without him. We thank Jim Borger, Mike Eastwood,
Katja Sagerschnig, Ravi Shroff, Dennis The and Travis Willse for helpful
conversations, and Matthew Randall for catching some errors. We also thank the
referees for suggesting improvements.  This research was supported under the
Australian Research Council's Discovery Projects funding scheme (project number
DP110100072), and by the FQXi minigrant `The Octonions in Fundamental Physics'.
We thank the Centre for Quantum Technologies for their hospitality.

\end{document}